 \def\dated#1{\def\thedate{#1}}
\newdimen\high%
\newdimen\ul%
\newdimen\wdth
\def\ratchet#1#2{\ifnum#1<#2\global #1=#2\fi}%
\def\ifnextchar#1#2#3{\let\@tempe
#1\def\@tempa{#2}\def\@tempb{#3}\futurelet
    \@tempc\@ifnch}%
\def\@ifnch{\ifx \@tempc \@sptoken \let\@tempd\@xifnch
      \else \ifx \@tempc \@tempe\let\@tempd\@tempa\else\let\@tempd\@tempb\fi
      \fi \@tempd}%
\def\:{\let\@sptoken= } \:  
\def\:{\@xifnch} \expandafter\def\: {\futurelet\@tempc\@ifnch}%
\let\ifnextchar\@ifnextchar
\newdimen\axis \axis=\fontdimen22\textfont2
\def\scalefactor#1{\ul=#1\ul \X@xbase=#1\X@xbase \Y@ybase=#1\Y@ybase}%
\def\fontscale#1{%
\if#1h\relax
\font\xydashfont=xydash10 scaled \magstephalf
\font\xyatipfont=xyatip10 scaled \magstephalf
\font\xybtipfont=xybtip10 scaled \magstephalf
\font\xybsqlfont=xybsql10 scaled \magstephalf
\font\xycircfont=xycirc10 scaled \magstephalf
\else
\font\xydashfont=xydash10 scaled \magstep#1%
\font\xyatipfont=xyatip10 scaled \magstep#1%
\font\xybtipfont=xybtip10 scaled \magstep#1%
\font\xybsqlfont=xybsql10 scaled \magstep#1%
\font\xycircfont=xycirc10 scaled \magstep#1%
\fi}
\def\bfig{\vcenter\bgroup\xy}
\def\efig{\endxy\egroup}
\def\car#1#2\nil{#1}%
\def\morphism{\ifnextchar({\morphismp}{\morphismp(0,0)}}%
\def\morphismp(#1){\ifnextchar|{\morphismpp(#1)}{\morphismpp(#1)|a|}}%
\def\morphismpp(#1)|#2|{\ifnextchar/{\morphismppp(#1)|#2|}%
    {\morphismppp(#1)|#2|/>/}}%
\def\morphismppp(#1)|#2|/#3/{%
    \ifnextchar<{\morphismpppp(#1)|#2|/#3/}%
    {\morphismpppp(#1)|#2|/#3/<\default,0>}}%
\def\morphismpppp(#1,#2)|#3|/#4/<#5,#6>[#7`#8;#9]{%
\xend#1\advance \xend by #5%
\yend#2\advance \yend by #6%
\domorphism(#1,#2)|#3|/#4/<#5,#6>[{#7}`{#8};{#9}]}
\def\domorphism(#1,#2)|#3|/#4/<#5,#6>[#7`#8;#9]{%
\def\next{\car#4.\nil}%
\if@\next\relax
 \if#3l%
  \ifnum #6>0%
   \POS(#1,#2)*+!!<0ex,\axis>{#7}\ar#4^-{#9} (\xend,\yend)*+!!<0ex,\axis>{#8}%
  \else%
   \POS(#1,#2)*+!!<0ex,\axis>{#7}\ar#4_-{#9} (\xend,\yend)*+!!<0ex,\axis>{#8}%
  \fi%
 \else \if#3m%
    \setbox0\hbox{$#9$}%
   \ifdim \wd0=0pt
     \POS(#1,#2)*+!!<0ex,\axis>{#7}\ar#4 (\xend,\yend)*+!!<0ex,\axis>{#8}%
   \else
     \POS(#1,#2)*+!!<0ex,\axis>{#7}\ar#4|-*+<1pt,4pt>{\labelstyle#9}
       (\xend,\yend)*+!!<0ex,\axis>{#8}%
   \fi
 \else \if#3r%
  \ifnum #6<0%
   \POS(#1,#2)*+!!<0ex,\axis>{#7}\ar#4^-{#9} (\xend,\yend)*+!!<0ex,\axis>{#8}%
  \else%
   \POS(#1,#2)*+!!<0ex,\axis>{#7}\ar#4_-{#9} (\xend,\yend)*+!!<0ex,\axis>{#8}%
  \fi%
 \else \if#3a%
  \ifnum #5>0%
   \POS(#1,#2)*+!!<0ex,\axis>{#7}\ar#4^-{#9} (\xend,\yend)*+!!<0ex,\axis>{#8}%
  \else%
   \POS(#1,#2)*+!!<0ex,\axis>{#7}\ar#4_-{#9} (\xend,\yend)*+!!<0ex,\axis>{#8}%
  \fi%
 \else \if#3b%
  \ifnum #5<0%
   \POS(#1,#2)*+!!<0ex,\axis>{#7}\ar#4^-{#9} (\xend,\yend)*+!!<0ex,\axis>{#8}%
  \else%
   \POS(#1,#2)*+!!<0ex,\axis>{#7}\ar#4_-{#9} (\xend,\yend)*+!!<0ex,\axis>{#8}%
  \fi%
 \else
   \POS(#1,#2)*+!!<0ex,\axis>{#7}\ar#4 (\xend,\yend)*+!!<0ex,\axis>{#8}%
 \fi\fi\fi\fi\fi%
\else%
 \if#3l%
  \ifnum #6>0%
   \POS(#1,#2)*+!!<0ex,\axis>{#7}\ar@{#4}^-{#9} (\xend,\yend)*+!!<0ex,\axis>{#8}%
  \else%
   \POS(#1,#2)*+!!<0ex,\axis>{#7}\ar@{#4}_-{#9} (\xend,\yend)*+!!<0ex,\axis>{#8}%
  \fi%
 \else \if#3m%
    \setbox0\hbox{$#9$}%
   \ifdim \wd0=0pt
     \POS(#1,#2)*+!!<0ex,\axis>{#7}\ar@{#4} (\xend,\yend)*+!!<0ex,\axis>{#8}%
   \else
     \POS(#1,#2)*+!!<0ex,\axis>{#7}\ar@{#4}|-*+<1pt,4pt>{\labelstyle#9}
         (\xend,\yend)*+!!<0ex,\axis>{#8}%
   \fi
 \else \if#3r%
  \ifnum #6<0%
   \POS(#1,#2)*+!!<0ex,\axis>{#7}\ar@{#4}^-{#9} (\xend,\yend)*+!!<0ex,\axis>{#8}%
  \else%
   \POS(#1,#2)*+!!<0ex,\axis>{#7}\ar@{#4}_-{#9} (\xend,\yend)*+!!<0ex,\axis>{#8}%
  \fi%
 \else \if#3a%
  \ifnum #5>0%
   \POS(#1,#2)*+!!<0ex,\axis>{#7}\ar@{#4}^-{#9} (\xend,\yend)*+!!<0ex,\axis>{#8}%
  \else%
   \POS(#1,#2)*+!!<0ex,\axis>{#7}\ar@{#4}_-{#9} (\xend,\yend)*+!!<0ex,\axis>{#8}%
  \fi%
 \else \if#3b%
  \ifnum #5<0%
   \POS(#1,#2)*+!!<0ex,\axis>{#7}\ar@{#4}^-{#9} (\xend,\yend)*+!!<0ex,\axis>{#8}%
  \else%
   \POS(#1,#2)*+!!<0ex,\axis>{#7}\ar@{#4}_-{#9} (\xend,\yend)*+!!<0ex,\axis>{#8}%
  \fi%
 \else
   \POS(#1,#2)*+!!<0ex,\axis>{#7}\ar@{#4} (\xend,\yend)*+!!<0ex,\axis>{#8}%
 \fi\fi\fi\fi\fi
\fi\ignorespaces}%
\def\vector(#1,#2)/#3/<#4,#5>{%
 \xend#1 \yend#2 \advance\xend by #4 \advance\yend by #5
     \POS(#1,#2)\ar#3 (\xend,\yend)}
\def\squarepppp(#1,#2)|#3|/#4`#5`#6`#7/<#8>[#9]{%
\xpos#1\ypos#2%
\def\next|##1##2##3##4|{%
 \def\xa{##1}\def\xb{##2}\def\xc{##3}\def\xd{##4}\ignorespaces}%
\next|#3|%
\def\next<##1,##2>{\deltax=##1\deltay=##2\ignorespaces}%
\next<#8>%
\def\next[##1`##2`##3`##4;##5`##6`##7`##8]{%
    \def\nodea{##1}\def\nodeb{##2}\def\nodec{##3}\def\noded{##4}%
    \def\labela{##5}\def\labelb{##6}\def\labelc{##7}\def\labeld{##8}\ignorespaces}%
\next[#9]%
\morphism(\xpos,\ypos)|\xd|/{#7}/<\deltax,0>[\nodec`\noded;\labeld]%
\advance \ypos by \deltay%
\morphism(\xpos,\ypos)|\xb|/{#5}/<0,-\deltay>[\nodea`\nodec;\labelb]%
\morphism(\xpos,\ypos)|\xa|/{#4}/<\deltax,0>[\nodea`\nodeb;\labela]%
 \advance \xpos by \deltax%
\morphism(\xpos,\ypos)|\xc|/{#6}/<0,-\deltay>[\nodeb`\noded;\labelc]%
\ignorespaces}%
\def\square{\ifnextchar({\squarep}{\squarep(0,0)}}%
\def\squarep(#1){\ifnextchar|{\squarepp(#1)}{\squarepp(#1)|alrb|}}%
\def\squarepp(#1)|#2|{\ifnextchar/{\squareppp(#1)|#2|}%
    {\squareppp(#1)|#2|/>`>`>`>/}}%
\def\squareppp(#1)|#2|/#3`#4`#5`#6/{%
    \ifnextchar<{\squarepppp(#1)|#2|/#3`#4`#5`#6/}%
    {\squarepppp(#1)|#2|/#3`#4`#5`#6/<\default,\default>}}%
\def\ptrianglepppp(#1,#2)|#3|/#4`#5`#6/<#7>[#8]{%
\xpos#1\ypos#2%
\def\next|##1##2##3|{\def\xa{##1}\def\xb{##2}\def\xc{##3}}%
\next|#3|%
\def\next<##1,##2>{\deltax=##1\deltay=##2\ignorespaces}%
\next<#7>%
\def\next[##1`##2`##3;##4`##5`##6]{%
    \def\nodea{##1}\def\nodeb{##2}\def\nodec{##3}%
    \def\labela{##4}\def\labelb{##5}\def\labelc{##6}}%
\next[#8]%
\advance\ypos by \deltay%
\morphism(\xpos,\ypos)|\xa|/{#4}/<\deltax,0>[\nodea`\nodeb;\labela]%
\morphism(\xpos,\ypos)|\xb|/{#5}/<0,-\deltay>[\nodea`\nodec;\labelb]%
\advance\xpos by \deltax%
\morphism(\xpos,\ypos)|\xc|/{#6}/<-\deltax,-\deltay>[\nodeb`\nodec;\labelc]%
\ignorespaces}%
\def\qtrianglepppp(#1,#2)|#3|/#4`#5`#6/<#7>[#8]{%
\xpos#1\ypos#2%
\def\next|##1##2##3|{\def\xa{##1}\def\xb{##2}\def\xc{##3}}%
\next|#3|%
\def\next<##1,##2>{\deltax=##1\deltay=##2\ignorespaces}%
\next<#7>%
\def\next[##1`##2`##3;##4`##5`##6]{%
    \def\nodea{##1}\def\nodeb{##2}\def\nodec{##3}%
    \def\labela{##4}\def\labelb{##5}\def\labelc{##6}}%
\next[#8]%
\advance\ypos by \deltay%
\morphism(\xpos,\ypos)|\xa|/{#4}/<\deltax,0>[\nodea`\nodeb;\labela]%
\morphism(\xpos,\ypos)|\xb|/{#5}/<\deltax,-\deltay>[\nodea`\nodec;\labelb]%
\advance\xpos by \deltax%
\morphism(\xpos,\ypos)|\xc|/{#6}/<0,-\deltay>[\nodeb`\nodec;\labelc]%
\ignorespaces}%
\def\dtrianglepppp(#1,#2)|#3|/#4`#5`#6/<#7>[#8]{%
\xpos#1\ypos#2%
\def\next|##1##2##3|{\def\xa{##1}\def\xb{##2}\def\xc{##3}}%
\next|#3|%
\def\next<##1,##2>{\deltax=##1\deltay=##2\ignorespaces}%
\next<#7>%
\def\next[##1`##2`##3;##4`##5`##6]{%
    \def\nodea{##1}\def\nodeb{##2}\def\nodec{##3}%
    \def\labela{##4}\def\labelb{##5}\def\labelc{##6}}%
\next[#8]%
\morphism(\xpos,\ypos)|\xc|/{#6}/<\deltax,0>[\nodeb`\nodec;\labelc]%
\advance\ypos by \deltay\advance \xpos by \deltax%
\morphism(\xpos,\ypos)|\xa|/{#4}/<-\deltax,-\deltay>[\nodea`\nodeb;\labela]%
\morphism(\xpos,\ypos)|\xb|/{#5}/<0,-\deltay>[\nodea`\nodec;\labelb]%
\ignorespaces}%
\def\btrianglepppp(#1,#2)|#3|/#4`#5`#6/<#7>[#8]{%
\xpos#1\ypos#2%
\def\next|##1##2##3|{\def\xa{##1}\def\xb{##2}\def\xc{##3}}%
\next|#3|%
\def\next<##1,##2>{\deltax=##1\deltay=##2\ignorespaces}%
\next<#7>%
\def\next[##1`##2`##3;##4`##5`##6]{%
    \def\nodea{##1}\def\nodeb{##2}\def\nodec{##3}%
    \def\labela{##4}\def\labelb{##5}\def\labelc{##6}}%
\next[#8]%
\morphism(\xpos,\ypos)|\xc|/{#6}/<\deltax,0>[\nodeb`\nodec;\labelc]%
\advance\ypos by \deltay%
\morphism(\xpos,\ypos)|\xa|/{#4}/<0,-\deltay>[\nodea`\nodeb;\labela]%
\morphism(\xpos,\ypos)|\xb|/{#5}/<\deltax,-\deltay>[\nodea`\nodec;\labelb]%
\ignorespaces}%
\def\Atrianglepppp(#1,#2)|#3|/#4`#5`#6/<#7>[#8]{%
\xpos#1\ypos#2%
\def\next|##1##2##3|{\def\xa{##1}\def\xb{##2}\def\xc{##3}}%
\next|#3|%
\def\next<##1,##2>{\deltax=##1\deltay=##2\ignorespaces}%
\next<#7>%
\def\next[##1`##2`##3;##4`##5`##6]{%
    \def\nodea{##1}\def\nodeb{##2}\def\nodec{##3}%
    \def\labela{##4}\def\labelb{##5}\def\labelc{##6}}%
\next[#8]%
\multiply\deltax by 2%
\morphism(\xpos,\ypos)|\xc|/{#6}/<\deltax,0>[\nodeb`\nodec;\labelc]%
\divide\deltax by 2
\advance\ypos by \deltay\advance\xpos by \deltax%
\morphism(\xpos,\ypos)|\xa|/{#4}/<-\deltax,-\deltay>[\nodea`\nodeb;\labela]%
\morphism(\xpos,\ypos)|\xb|/{#5}/<\deltax,-\deltay>[\nodea`\nodec;\labelb]%
\ignorespaces}%
\def\Vtrianglepppp(#1,#2)|#3|/#4`#5`#6/<#7>[#8]{%
\xpos#1\ypos#2%
\def\next|##1##2##3|{\def\xa{##1}\def\xb{##2}\def\xc{##3}}%
\next|#3|%
\def\next<##1,##2>{\deltax=##1\deltay=##2\ignorespaces}%
\next<#7>%
\def\next[##1`##2`##3;##4`##5`##6]{%
    \def\nodea{##1}\def\nodeb{##2}\def\nodec{##3}%
    \def\labela{##4}\def\labelb{##5}\def\labelc{##6}}%
\next[#8]%
\advance\ypos by \deltay%
\morphism(\xpos,\ypos)|\xb|/{#5}/<\deltax,-\deltay>[\nodea`\nodec;\labelb]%
\multiply\deltax by 2%
\morphism(\xpos,\ypos)|\xa|/{#4}/<\deltax,0>[\nodea`\nodeb;\labela]%
\advance\xpos by \deltax \divide \deltax by 2
\morphism(\xpos,\ypos)|\xc|/{#6}/<-\deltax,-\deltay>[\nodeb`\nodec;\labelc]%
\ignorespaces}%
\def\Ctrianglepppp(#1,#2)|#3|/#4`#5`#6/<#7>[#8]{%
\xpos#1\ypos#2%
\def\next|##1##2##3|{\def\xa{##1}\def\xb{##2}\def\xc{##3}}%
\next|#3|%
\def\next<##1,##2>{\deltax=##1\deltay=##2\ignorespaces}%
\next<#7>%
\def\next[##1`##2`##3;##4`##5`##6]{%
    \def\nodea{##1}\def\nodeb{##2}\def\nodec{##3}%
    \def\labela{##4}\def\labelb{##5}\def\labelc{##6}}%
\next[#8]%
\advance \ypos by \deltay%
\morphism(\xpos,\ypos)|\xc|/{#6}/<\deltax,-\deltay>[\nodeb`\nodec;\labelc]%
\advance\ypos by \deltay \advance \xpos by \deltax%
\morphism(\xpos,\ypos)|\xa|/{#4}/<-\deltax,-\deltay>[\nodea`\nodeb;\labela]%
\multiply\deltay by 2%
\morphism(\xpos,\ypos)|\xb|/{#5}/<0,-\deltay>[\nodea`\nodec;\labelb]%
\ignorespaces}%
\def\Dtrianglepppp(#1,#2)|#3|/#4`#5`#6/<#7>[#8]{%
\xpos#1\ypos#2%
\def\next|##1##2##3|{\def\xa{##1}\def\xb{##2}\def\xc{##3}}%
\next|#3|%
\def\next<##1,##2>{\deltax=##1\deltay=##2\ignorespaces}%
\next<#7>%
\def\next[##1`##2`##3;##4`##5`##6]{%
    \def\nodea{##1}\def\nodeb{##2}\def\nodec{##3}%
    \def\labela{##4}\def\labelb{##5}\def\labelc{##6}}%
\next[#8]%
\advance\xpos by \deltax \advance\ypos by \deltay%
\morphism(\xpos,\ypos)|\xc|/{#6}/<-\deltax,-\deltay>[\nodeb`\nodec;\labelc]%
\advance\xpos by -\deltax \advance\ypos by \deltay%
\morphism(\xpos,\ypos)|\xb|/{#5}/<\deltax,-\deltay>[\nodea`\nodeb;\labelb]%
\multiply \deltay by 2%
\morphism(\xpos,\ypos)|\xa|/{#4}/<0,-\deltay>[\nodea`\nodec;\labela]%
\ignorespaces}%
\def\ptrianglep(#1){\ifnextchar|{\ptrianglepp(#1)}{\ptrianglepp(#1)|alr|}}%
\def\ptrianglepp(#1)|#2|{\ifnextchar/{\ptriangleppp(#1)|#2|}%
    {\ptriangleppp(#1)|#2|/>`>`>/}}%
\def\ptriangleppp(#1)|#2|/#3`#4`#5/{%
    \ifnextchar<{\ptrianglepppp(#1)|#2|/#3`#4`#5/}%
    {\ptrianglepppp(#1)|#2|/#3`#4`#5/<\default,\default>}}%
\def\qtrianglep(#1){\ifnextchar|{\qtrianglepp(#1)}{\qtrianglepp(#1)|alr|}}%
\def\qtrianglepp(#1)|#2|{\ifnextchar/{\qtriangleppp(#1)|#2|}%
    {\qtriangleppp(#1)|#2|/>`>`>/}}%
\def\qtriangleppp(#1)|#2|/#3`#4`#5/{%
    \ifnextchar<{\qtrianglepppp(#1)|#2|/#3`#4`#5/}%
    {\qtrianglepppp(#1)|#2|/#3`#4`#5/<\default,\default>}}%
\def\dtrianglep(#1){\ifnextchar|{\dtrianglepp(#1)}{\dtrianglepp(#1)|lrb|}}%
\def\dtrianglepp(#1)|#2|{\ifnextchar/{\dtriangleppp(#1)|#2|}%
    {\dtriangleppp(#1)|#2|/>`>`>/}}%
\def\dtriangleppp(#1)|#2|/#3`#4`#5/{%
    \ifnextchar<{\dtrianglepppp(#1)|#2|/#3`#4`#5/}%
    {\dtrianglepppp(#1)|#2|/#3`#4`#5/<\default,\default>}}%
\def\btrianglep(#1){\ifnextchar|{\btrianglepp(#1)}{\btrianglepp(#1)|lrb|}}%
\def\btrianglepp(#1)|#2|{\ifnextchar/{\btriangleppp(#1)|#2|}%
    {\btriangleppp(#1)|#2|/>`>`>/}}%
\def\btriangleppp(#1)|#2|/#3`#4`#5/{%
    \ifnextchar<{\btrianglepppp(#1)|#2|/#3`#4`#5/}%
    {\btrianglepppp(#1)|#2|/#3`#4`#5/<\default,\default>}}%
\def\Atrianglep(#1){\ifnextchar|{\Atrianglepp(#1)}{\Atrianglepp(#1)|lrb|}}%
\def\Atrianglepp(#1)|#2|{\ifnextchar/{\Atriangleppp(#1)|#2|}%
    {\Atriangleppp(#1)|#2|/>`>`>/}}%
\def\Atriangleppp(#1)|#2|/#3`#4`#5/{%
    \ifnextchar<{\Atrianglepppp(#1)|#2|/#3`#4`#5/}%
    {\Atrianglepppp(#1)|#2|/#3`#4`#5/<\default,\default>}}%
\def\Vtrianglep(#1){\ifnextchar|{\Vtrianglepp(#1)}{\Vtrianglepp(#1)|alb|}}%
\def\Vtrianglepp(#1)|#2|{\ifnextchar/{\Vtriangleppp(#1)|#2|}%
    {\Vtriangleppp(#1)|#2|/>`>`>/}}%
\def\Vtriangleppp(#1)|#2|/#3`#4`#5/{%
    \ifnextchar<{\Vtrianglepppp(#1)|#2|/#3`#4`#5/}%
    {\Vtrianglepppp(#1)|#2|/#3`#4`#5/<\default,\default>}}%
\def\Ctrianglep(#1){\ifnextchar|{\Ctrianglepp(#1)}{\Ctrianglepp(#1)|arb|}}%
\def\Ctrianglepp(#1)|#2|{\ifnextchar/{\Ctriangleppp(#1)|#2|}%
    {\Ctriangleppp(#1)|#2|/>`>`>/}}%
\def\Ctriangleppp(#1)|#2|/#3`#4`#5/{%
    \ifnextchar<{\Ctrianglepppp(#1)|#2|/#3`#4`#5/}%
    {\Ctrianglepppp(#1)|#2|/#3`#4`#5/<\default,\default>}}%
\def\Dtrianglep(#1){\ifnextchar|{\Dtrianglepp(#1)}{\Dtrianglepp(#1)|alb|}}%
\def\Dtrianglepp(#1)|#2|{\ifnextchar/{\Dtriangleppp(#1)|#2|}%
    {\Dtriangleppp(#1)|#2|/>`>`>/}}%
\def\Dtriangleppp(#1)|#2|/#3`#4`#5/{%
    \ifnextchar<{\Dtrianglepppp(#1)|#2|/#3`#4`#5/}%
    {\Dtrianglepppp(#1)|#2|/#3`#4`#5/<\default,\default>}}%
\def\Atrianglepairpppp(#1)|#2|/#3`#4`#5`#6`#7/<#8>[#9]{%
\def\next(##1,##2){\xpos##1\ypos##2}%
\next(#1)%
\def\next|##1##2##3##4##5|{\def\xa{##1}\def\xb{##2}%
\def\xc{##3}\def\xd{##4}\def\xe{##5}}%
\next|#2|%
\def\next<##1,##2>{\deltax=##1\deltay=##2\ignorespaces}%
\next<#8>%
\def\next[##1`##2`##3`##4;##5`##6`##7`##8`##9]{%
 \def\nodea{##1}\def\nodeb{##2}\def\nodec{##3}\def\noded{##4}%
 \def\labela{##5}\def\labelb{##6}\def\labelc{##7}\def\labeld{##8}\def\labele{##9}}%
\next[#9]%
\morphism(\xpos,\ypos)|\xd|/{#6}/<\deltax,0>[\nodeb`\nodec;\labeld]%
\advance\xpos by \deltax%
\morphism(\xpos,\ypos)|\xe|/{#7}/<\deltax,0>[\nodec`\noded;\labele]%
\advance\ypos by \deltay%
\morphism(\xpos,\ypos)|\xa|/{#3}/<-\deltax,-\deltay>[\nodea`\nodeb;\labela]%
\morphism(\xpos,\ypos)|\xb|/{#4}/<0,-\deltay>[\nodea`\nodec;\labelb]%
\morphism(\xpos,\ypos)|\xc|/{#5}/<\deltax,-\deltay>[\nodea`\noded;\labelc]%
\ignorespaces}%
\def\Vtrianglepairpppp(#1)|#2|/#3`#4`#5`#6`#7/<#8>[#9]{%
\def\next(##1,##2){\xpos##1\ypos##2}%
\next(#1)%
\def\next|##1##2##3##4##5|{\def\xa{##1}\def\xb{##2}%
\def\xc{##3}\def\xd{##4}\def\xe{##5}}%
\next|#2|%
\def\next<##1,##2>{\deltax=##1\deltay=##2\ignorespaces}%
\next<#8>%
\def\next[##1`##2`##3`##4;##5`##6`##7`##8`##9]{%
 \def\nodea{##1}\def\nodeb{##2}\def\nodec{##3}\def\noded{##4}%
 \def\labela{##5}\def\labelb{##6}\def\labelc{##7}\def\labeld{##8}\def\labele{##9}}%
\next[#9]%
\advance\ypos by \deltay%
\morphism(\xpos,\ypos)|\xa|/{#3}/<\deltax,0>[\nodea`\nodeb;\labela]%
\morphism(\xpos,\ypos)|\xc|/{#5}/<\deltax,-\deltay>[\nodea`\noded;\labelc]%
\advance\xpos by \deltax%
\morphism(\xpos,\ypos)|\xb|/{#4}/<\deltax,0>[\nodeb`\nodec;\labelb]%
\morphism(\xpos,\ypos)|\xd|/{#6}/<0,-\deltay>[\nodeb`\noded;\labeld]%
\advance\xpos by \deltax%
\morphism(\xpos,\ypos)|\xe|/{#7}/<-\deltax,-\deltay>[\nodec`\noded;\labele]%
\ignorespaces}%
\def\Ctrianglepairpppp(#1)|#2|/#3`#4`#5`#6`#7/<#8>[#9]{%
\def\next(##1,##2){\xpos##1\ypos##2}%
\next(#1)%
\def\next|##1##2##3##4##5|{\def\xa{##1}\def\xb{##2}%
\def\xc{##3}\def\xd{##4}\def\xe{##5}}%
\next|#2|%
\def\next<##1,##2>{\deltax=##1\deltay=##2\ignorespaces}%
\next<#8>%
\def\next[##1`##2`##3`##4;##5`##6`##7`##8`##9]{%
 \def\nodea{##1}\def\nodeb{##2}\def\nodec{##3}\def\noded{##4}%
 \def\labela{##5}\def\labelb{##6}\def\labelc{##7}\def\labeld{##8}\def\labele{##9}}%
\next[#9]%
\advance\ypos by \deltay%
\morphism(\xpos,\ypos)|\xe|/{#7}/<0,-\deltay>[\nodec`\noded;\labele]%
\advance\xpos by -\deltax%
\morphism(\xpos,\ypos)|\xc|/{#5}/<\deltax,0>[\nodeb`\nodec;\labelc]%
\morphism(\xpos,\ypos)|\xd|/{#6}/<\deltax,-\deltay>[\nodeb`\noded;\labeld]%
\advance\ypos by \deltay%
\advance\xpos by \deltax%
\morphism(\xpos,\ypos)|\xa|/{#3}/<-\deltax,-\deltay>[\nodea`\nodeb;\labela]%
\morphism(\xpos,\ypos)|\xb|/{#4}/<0,-\deltay>[\nodea`\nodec;\labelb]%
\ignorespaces}%
\def\Dtrianglepairpppp(#1)|#2|/#3`#4`#5`#6`#7/<#8>[#9]{%
\def\next(##1,##2){\xpos##1\ypos##2}%
\next(#1)%
\def\next|##1##2##3##4##5|{\def\xa{##1}\def\xb{##2}%
\def\xc{##3}\def\xd{##4}\def\xe{##5}}%
\next|#2|%
\def\next<##1,##2>{\deltax=##1\deltay=##2\ignorespaces}%
\next<#8>%
\def\next[##1`##2`##3`##4;##5`##6`##7`##8`##9]{%
 \def\nodea{##1}\def\nodeb{##2}\def\nodec{##3}\def\noded{##4}%
 \def\labela{##5}\def\labelb{##6}\def\labelc{##7}\def\labeld{##8}\def\labele{##9}}%
\next[#9]%
\advance\ypos by \deltay%
\morphism(\xpos,\ypos)|\xc|/{#5}/<\deltax,0>[\nodeb`\nodec;\labelc]%
\morphism(\xpos,\ypos)|\xd|/{#6}/<0,-\deltay>[\nodeb`\noded;\labeld]%
\advance\ypos by \deltay%
\morphism(\xpos,\ypos)|\xa|/{#3}/<0,-\deltay>[\nodea`\nodeb;\labela]%
\morphism(\xpos,\ypos)|\xb|/{#4}/<\deltax,-\deltay>[\nodea`\nodec;\labelb]%
\advance\ypos by -\deltay%
\advance\xpos by \deltax%
\morphism(\xpos,\ypos)|\xe|/{#7}/<-\deltax,-\deltay>[\nodec`\noded;\labele]%
\ignorespaces}%
\def\Atrianglepairp(#1){\ifnextchar|{\Atrianglepairpp(#1)}%
{\Atrianglepairpp(#1)|lmrbb|}}%
\def\Atrianglepairpp(#1)|#2|{\ifnextchar/{\Atrianglepairppp(#1)|#2|}%
    {\Atrianglepairppp(#1)|#2|/>`>`>`>`>/}}%
\def\Atrianglepairppp(#1)|#2|/#3`#4`#5`#6`#7/{%
    \ifnextchar<{\Atrianglepairpppp(#1)|#2|/#3`#4`#5`#6`#7/}%
    {\Atrianglepairpppp(#1)|#2|/#3`#4`#5`#6`#7/<\default,\default>}}%
\def\Vtrianglepairp(#1){\ifnextchar|{\Vtrianglepairpp(#1)}%
{\Vtrianglepairpp(#1)|aalmr|}}%
\def\Vtrianglepairpp(#1)|#2|{\ifnextchar/{\Vtrianglepairppp(#1)|#2|}%
    {\Vtrianglepairppp(#1)|#2|/>`>`>`>`>/}}%
\def\Vtrianglepairppp(#1)|#2|/#3`#4`#5`#6`#7/{%
    \ifnextchar<{\Vtrianglepairpppp(#1)|#2|/#3`#4`#5`#6`#7/}%
    {\Vtrianglepairpppp(#1)|#2|/#3`#4`#5`#6`#7/<\default,\default>}}%
\def\Ctrianglepairp(#1){\ifnextchar|{\Ctrianglepairpp(#1)}%
{\Ctrianglepairpp(#1)|lrmlr|}}%
\def\Ctrianglepairpp(#1)|#2|{\ifnextchar/{\Ctrianglepairppp(#1)|#2|}%
    {\Ctrianglepairppp(#1)|#2|/>`>`>`>`>/}}%
\def\Ctrianglepairppp(#1)|#2|/#3`#4`#5`#6`#7/{%
    \ifnextchar<{\Ctrianglepairpppp(#1)|#2|/#3`#4`#5`#6`#7/}%
    {\Ctrianglepairpppp(#1)|#2|/#3`#4`#5`#6`#7/<\default,\default>}}%
\def\Dtrianglepairp(#1){\ifnextchar|{\Dtrianglepairpp(#1)}%
{\Dtrianglepairpp(#1)|lrmlr|}}%
\def\Dtrianglepairpp(#1)|#2|{\ifnextchar/{\Dtrianglepairppp(#1)|#2|}%
    {\Dtrianglepairppp(#1)|#2|/>`>`>`>`>/}}%
\def\Dtrianglepairppp(#1)|#2|/#3`#4`#5`#6`#7/{%
    \ifnextchar<{\Dtrianglepairpppp(#1)|#2|/#3`#4`#5`#6`#7/}%
    {\Dtrianglepairpppp(#1)|#2|/#3`#4`#5`#6`#7/<\default,\default>}}%
\def\pplace[#1](#2,#3)[#4]{\POS(#2,#3)*+!!<0ex,\axis>!#1{#4}\ignorespaces}%
\def\cplace(#1,#2)[#3]{\POS(#1,#2)*+!!<0ex,\axis>{#3}\ignorespaces}%
\def\pullback#1]#2]{\square#1]\trident#2]\ignorespaces}%
\def\tridentppp|#1#2#3|/#4`#5`#6/<#7,#8>[#9]{%
\def\next[##1;##2`##3`##4]{\def\nodee{##1}\def\labele{##2}%
   \def\labelf{##3}\def\labelg{##4}}%
\next[#9]%
\advance \xpos by -\deltax%
\advance \xpos by -#7\advance \ypos by #8%
\advance\deltax by #7%
\morphism(\xpos,\ypos)|#1|/{#4}/<\deltax,-#8>[\nodee`\nodeb;\labele]%
\advance\deltax by -#7%
\morphism(\xpos,\ypos)|#2|/{#5}/<#7,-#8>[\nodee`\nodea;\labelf]%
\advance\deltay by #8%
\morphism(\xpos,\ypos)|#3|/{#6}/<#7,-\deltay>[\nodee`\nodec;\labelg]%
\ignorespaces}%
\def\trident{\ifnextchar|{\tridentp}{\tridentp|amb|}}%
\def\tridentp|#1|{\ifnextchar/{\tridentpp|#1|}{\tridentpp|#1|/{>}`{>}`{>}/}}%
\def\tridentpp|#1|/#2/{\ifnextchar<{\tridentppp|#1|/#2/}%
  {\tridentppp|#1|/#2/<500,500>}}%
\def\setmorphismwidth#1#2#3#4{%
 \setbox0=\hbox{$#1{\labelstyle#3#3}#2$}#4=\wd0%
 \divide #4 by 2 \divide #4 by \ul%
 \advance #4 by 350 \ratchet{#4}{500}}%
\def\setSquarewidth[#1`#2`#3`#4;#5`#6`#7`#8]{%
 \setmorphismwidth{#1}{#2}{#5}{\topw}%
 \setmorphismwidth{#3}{#4}{#8}{\botw}%
\ratchet{\topw}{\botw}}%
\def\Squarepppp(#1)|#2|/#3/<#4>[#5]{%
 \setSquarewidth[#5]%
 \squarepppp(#1)|#2|/#3/<\topw,#4>[#5]%
\ignorespaces}%
\def\Squarep(#1){\ifnextchar|{\Squarepp(#1)}{\Squarepp(#1)|alrb|}}%
\def\Squarepp(#1)|#2|{\ifnextchar/{\Squareppp(#1)|#2|}%
    {\Squareppp(#1)|#2|/>`>`>`>/}}%
\def\Squareppp(#1)|#2|/#3`#4`#5`#6/{%
    \ifnextchar<{\Squarepppp(#1)|#2|/#3`#4`#5`#6/}%
    {\Squarepppp(#1)|#2|/#3`#4`#5`#6/<\default>}}%
\def\hSquarespppp(#1,#2)|#3|/#4/<#5>[#6;#7]{%
\Xpos=#1\Ypos=#2%
\def\next|##1##2##3##4##5##6##7|{%
 \def\Xa{##1}\def\Xb{##2}\def\Xc{##3}\def\Xd{##4}%
 \def\Xe{##5}\def\Xf{##6}\def\Xg{##7}}%
\next|#3|%
\deltaY=#5%
\def\next[##1`##2`##3`##4`##5`##6]{%
 \def\Nodea{##1}\def\Nodeb{##2}\def\Nodec{##3}%
 \def\Noded{##4}\def\Nodee{##5}\def\Nodef{##6}}%
\next[#6]%
\def\next[##1`##2`##3`##4`##5`##6`##7]{%
 \def\Labela{##1}\def\Labelb{##2}\def\Labelc{##3}\def\Labeld{##4}%
 \def\Labele{##5}\def\Labelf{##6}\def\Labelg{##7}}%
\next[#7]%
\dohSquares/#4/}%
\def\dohSquares/#1`#2`#3`#4`#5`#6`#7/{%
\Squarepppp(\Xpos,\Ypos)|\Xa\Xc\Xd\Xf|/#1`#3`#4`#6/<\deltaY>%
 [\Nodea`\Nodeb`\Noded`\Nodee;\Labela`\Labelc`\Labeld`\Labelf]%
 \advance \Xpos by \topw
\Squarepppp(\Xpos,\Ypos)|\Xb\Xd\Xe\Xg|/#2``#5`#7/<\deltaY>%
[\Nodeb`\Nodec`\Nodee`\Nodef;\Labelb``\Labele`\Labelg]%
\ignorespaces}%
\def\hSquaresp(#1){\ifnextchar|{\hSquarespp(#1)}{\hSquarespp%
(#1)|aalmrbb|}}%
\def\hSquarespp(#1)|#2|{\ifnextchar/{\hSquaresppp(#1)|#2|}%
    {\hSquaresppp(#1)|#2|/>`>`>`>`>`>`>/}}%
\def\hSquaresppp(#1)|#2|/#3/{%
    \ifnextchar<{\hSquarespppp(#1)|#2|/#3/}%
    {\hSquarespppp(#1)|#2|/#3/<\default>}}%
\def\vSquarespppp(#1,#2)|#3|/#4/<#5,#6>[#7;#8]{%
\Xpos=#1\Ypos=#2%
\def\next|##1##2##3##4##5##6##7|{%
 \def\Xa{##1}\def\Xb{##2}\def\Xc{##3}\def\Xd{##4}%
 \def\Xe{##5}\def\Xf{##6}\def\Xg{##7}}%
\next|#3|%
\deltaX=#5%
\deltaY=#6%
\def\next[##1`##2`##3`##4`##5`##6]{%
 \def\Nodea{##1}\def\Nodeb{##2}\def\Nodec{##3}%
 \def\Noded{##4}\def\Nodee{##5}\def\Nodef{##6}}%
\next[#7]%
\def\next[##1`##2`##3`##4`##5`##6`##7]{%
 \def\Labela{##1}\def\Labelb{##2}\def\Labelc{##3}\def\Labeld{##4}%
 \def\Labele{##5}\def\Labelf{##6}\def\Labelg{##7}}%
\next[#8]%
\dovSquares/#4/\ignorespaces}%
\def\dovSquares/#1`#2`#3`#4`#5`#6`#7/{%
\setmorphismwidth{\Nodea}{\Nodeb}{\Labela}{\topw}%
\setmorphismwidth{\Nodec}{\Noded}{\Labeld}{\botw}%
\ratchet{\topw}{\botw}%
\setmorphismwidth{\Nodee}{\Nodef}{\Labelg}{\botw}%
\ratchet{\topw}{\botw}%
\square(\Xpos,\Ypos)|\Xd\Xe\Xf\Xg|/`#5`#6`#7/<\topw,\deltaX>%
 [\Nodec`\Noded`\Nodee`\Nodef;`\Labele`\Labelf`\Labelg]%
\advance \Ypos by \deltaX%
\square(\Xpos,\Ypos)|\Xa\Xb\Xc\Xd|/#1`#2`#3`#4/<\topw,\deltaY>%
 [\Nodea`\Nodeb`\Nodec`\Noded;\Labela`\Labelb`\Labelc`\Labeld]%
}%
\def\vSquaresp(#1){\ifnextchar|{\vSquarespp(#1)}{\vSquarespp%
(#1)|alrmlrb|}}%
\def\vSquarespp(#1)|#2|{\ifnextchar/{\vSquaresppp(#1)|#2|}%
    {\vSquaresppp(#1)|#2|/>`>`>`>`>`>`>/}}%
\def\vSquaresppp(#1)|#2|/#3/{%
    \ifnextchar<{\vSquarespppp(#1)|#2|/#3/}%
    {\vSquarespppp(#1)|#2|/#3/<\default,\default>}}%
\def\osquarepppp(#1)|#2|/#3`#4`#5`#6/<#7>[#8]{\squarepppp%
 (#1)|#2|/#3`#4`#5`#6/<#7>[#8]%
 \let\Nodea\nodea\let\Nodeb\nodeb%
\let\Nodec\nodec\let\Noded\noded\Xpos=\xpos\Ypos=\ypos%
\deltaX=\deltax \deltaY=\deltay \isquare}
\def\osquarep(#1){\ifnextchar|{\osquarepp(#1)}{\osquarepp(#1)|alrb|}}%
\def\osquarepp(#1)|#2|{\ifnextchar/{\osquareppp(#1)|#2|}%
    {\osquareppp(#1)|#2|/>`>`>`>/}}%
\def\osquareppp(#1)|#2|/#3`#4`#5`#6/{%
    \ifnextchar<{\osquarepppp(#1)|#2|/#3`#4`#5`#6/}%
    {\osquarepppp(#1)|#2|/#3`#4`#5`#6/<1500,1500>}}%
\def\isquarepppp(#1)|#2|/#3`#4`#5`#6/<#7>[#8]{%
 \squarepppp(#1)|#2|/#3`#4`#5`#6/<#7>[#8]%
\ifnextchar|{\cubep}{\cubep|mmmm|}}%
\def\cubep|#1|{\ifnextchar/{\cubepp|#1|}{\cubepp|#1|/>`>`>`>/}}%
\def\isquare{\ifnextchar({\isquarep}{\isquarep(\default,\default)}}%
\def\isquarep(#1){\ifnextchar|{\isquarepp(#1)}{\isquarepp(#1)|alrb|}}
\def\isquarepp(#1)|#2|{\ifnextchar/{\isquareppp(#1)|#2|}%
    {\isquareppp(#1)|#2|/>`>`>`>/}}%
\def\isquareppp(#1)|#2|/#3`#4`#5`#6/{%
    \ifnextchar<{\isquarepppp(#1)|#2|/#3`#4`#5`#6/}%
    {\isquarepppp(#1)|#2|/#3`#4`#5`#6/<500,500>}}%
\def\cubepp|#1#2#3#4|/#5`#6`#7`#8/[#9]{%
\def\next[##1`##2`##3`##4]{\gdef\Labela{##1}%
\gdef\Labelb{##2}\gdef\Labelc{##3}\gdef\Labeld{##4}}\next[#9]%
\xend\xpos \yend\ypos
\Xend\xend\advance\Xend by -\Xpos
\Yend\yend\advance\Yend by -\Ypos
\domorphism(\Xpos,\Ypos)|#2|/#6/<\Xend,\Yend>[\Nodeb`\nodeb;\Labelb]%
\advance\Xpos by-\deltaX
\advance\xend by-\deltax
\Xend\xend\advance\Xend by -\Xpos
\domorphism(\Xpos,\Ypos)|#1|/#5/<\Xend,\Yend>[\Nodea`\nodea;\Labela]%
\advance\Ypos by-\deltaY
\advance\yend by-\deltay
\Yend\yend\advance\Yend by -\Ypos
\domorphism(\Xpos,\Ypos)|#3|/#7/<\Xend,\Yend>[\Nodec`\nodec;\Labelc]%
\advance\Xpos by\deltaX
\advance\xend by\deltax
\Xend\xend\advance\Xend by -\Xpos
\domorphism(\Xpos,\Ypos)|#4|/#8/<\Xend,\Yend>[\Noded`\noded;\Labeld]%
\ignorespaces}
\def\setwdth#1#2{\setbox0\hbox{$\labelstyle#1$}\wdth=\wd0
\setbox0\hbox{$\labelstyle#2$}\ifnum\wdth<\wd0 \wdth=\wd0 \fi}
\def\topppp/#1/<#2>^#3_#4{\allowbreak\mathrel{%
\ifnum#2=0
   \setwdth{#3}{#4}\deltax=\wdth \divide \deltax by \ul
   \advance \deltax by \defaultmargin  \ratchet{\deltax}{200}%
\else \deltax #2
\fi
\xy\ar@{#1}^{#3}_{#4}(\deltax,0) \endxy
\ignorespaces}}
\def\toppp/#1/<#2>^#3{\ifnextchar_{\topppp/#1/<#2>^{#3}}{\topppp/#1/<#2>^{#3}_{}}}
\def\topp/#1/<#2>{\ifnextchar^{\toppp/#1/<#2>}{\toppp/#1/<#2>^{}}}
\def\toop/#1/{\ifnextchar<{\topp/#1/}{\topp/#1/<0>}}
\def\twopppp/#1`#2/<#3>^#4_#5{\allowbreak\mathrel{%
\ifnum0=#3
  \setwdth{#4}{#5}\deltax=\wdth \divide \deltax by \ul \advance \deltax
  by \defaultmargin \ratchet{\deltax}{200}%
\else \deltax#3 \fi
\xy\ar@{#1}@<2.5pt>^{#4}(\deltax,0)%
\ar@{#2}@<-2.5pt>_{#5}(\deltax,0)\endxy\ignorespaces}}
\def\twoppp/#1`#2/<#3>^#4{\ifnextchar_{\twopppp/#1`#2/<#3>^{#4}}%
  {\twopppp/#1`#2/<#3>^{#4}_{}}}
\def\twopp/#1`#2/<#3>{\ifnextchar^{\twoppp/#1`#2/<#3>}{\twoppp/#1`#2/<#3>^{}}}
\def\twop/#1`#2/{\ifnextchar<{\twopp/#1`#2/}{\twopp/#1`#2/<0>}}
\def\threeppppp/#1`#2`#3/<#4>^#5|#6_#7{\allowbreak\mathrel{%
\ifnum0=#4
\setbox0\hbox{$\labelstyle#5$}\wdth=\wd0
\setbox0\hbox{$\labelstyle#6$}\ifnum\wdth<\wd0 \wdth=\wd0 \fi
\setbox0\hbox{$\labelstyle#7$}\ifnum\wdth<\wd0 \wdth=\wd0 \fi
\deltax=\wdth \divide \deltax by \ul \advance \deltax by
\defaultmargin \ratchet{\deltax}{300}%
\else\deltax#4 \fi
    \xy \ifnum\wd0=0 \ar@{#2}(\deltax,0)
    \else \ar@{#2}|{#6}(\deltax,0)\fi
\ar@{#1}@<4.5pt>^{#5}(\deltax,0)
\ar@{#3}@<-4.5pt>_{#7}(\deltax,0)\endxy\ignorespaces}}
\def\threepppp/#1`#2`#3/<#4>^#5|#6{\ifnextchar_{\threeppppp
  /#1`#2`#3/<#4>^{#5}|{#6}}{\threeppppp/#1`#2`#3/<#4>^{#5}|{#6}_{}}}
\def\threeppp/#1`#2`#3/<#4>^#5{\ifnextchar|{\threepppp
  /#1`#2`#3/<#4>^{#5}}{\threepppp/#1`#2`#3/<#4>^{#5}|{}}}
\def\threepp/#1`#2`#3/<#4>{\ifnextchar^{\threeppp/#1`#2`#3/<#4>}%
  {\threeppp/#1`#2`#3/<#4>^{}}}
\def\threep/#1`#2`#3/{\ifnextchar<{\threepp/#1`#2`#3/}%
  {\threepp/#1`#2`#3/<0>}}
\def\twoar(#1,#2){{%
 \scalefactor{0.1}
 \deltax#1\deltay#2%
 \deltaX=\ifnum\deltax<0-\fi\deltax
 \deltaY=\ifnum\deltay<0-\fi\deltay
 \Xend\deltax \multiply \Xend by \deltax
 \Yend\deltay \multiply \Yend by \deltay
 \advance\Xend by \Yend \multiply \Xend by 3
 \ifnum \deltaX > \deltaY
    \multiply \deltaX by 3 \advance \deltaX by \deltaY
 \else
    \multiply \deltaY by 3 \advance \deltaX by \deltaY
 \fi
 \multiply\deltax by 500
 \multiply\deltay by 500
 \xpos\deltax \multiply \xpos by 3 \divide\xpos by \deltaX
 \Xpos\deltax \multiply \Xpos by \deltaX \divide \Xpos by \Xend
 \advance \xpos by \Xpos
 \ypos\deltay \multiply \ypos by 3 \divide\ypos by \deltaX
 \Ypos\deltay \multiply \Ypos by \deltaX \divide \Ypos by \Xend
 \advance \ypos by \Ypos
 \xy \ar@{=>}(\xpos,\ypos) \endxy
}\ignorespaces}
\def\iiixiiipppppp(#1,#2)|#3|/#4/<#5>#6<#7>[#8;#9]{%
 \xpos#1\ypos#2\relax
 \def\next|##1##2##3##4##5##6##7|{\def\xa{##1}\def\xb{##2}%
 \def\xc{##3}\def\xd{##4}\def\xe{##5}\def\xf{##6}\nextt|##7|}%
 \def\nextt|##1##2##3##4##5##6|{\def\xg{##1}\def\xh{##2}%
 \def\xi{##3}\def\xj{##4}\def\xk{##5}\def\xl{##6}}%
 \next|#3|%
 \def\next<##1,##2>{\deltax##1\deltay##2}%
 \next<#5>%
 \def\next<##1,##2>{\deltaX##1\deltaY##2}%
 \next<#7>%
 \def\next##1{\topw##1\relax
 \ifodd\topw \def\zl{}\else\def\zl{\relax}\fi \divide\topw by 2
 \ifodd\topw \def\zk{}\else\def\zk{\relax}\fi \divide\topw by 2
 \ifodd\topw \def\zj{}\else\def\zj{\relax}\fi \divide\topw by 2
 \ifodd\topw \def\zi{}\else\def\zi{\relax}\fi \divide\topw by 2
 \ifodd\topw \def\zh{}\else\def\zh{\relax}\fi \divide\topw by 2
 \ifodd\topw \def\zg{}\else\def\zg{\relax}\fi \divide\topw by 2
 \ifodd\topw \def\zf{}\else\def\zf{\relax}\fi \divide\topw by 2
 \ifodd\topw \def\ze{}\else\def\ze{\relax}\fi \divide\topw by 2
 \ifodd\topw \def\zd{}\else\def\zd{\relax}\fi \divide\topw by 2
 \ifodd\topw \def\zc{}\else\def\zc{\relax}\fi \divide\topw by 2
 \ifodd\topw \def\zb{}\else\def\zb{\relax}\fi \divide\topw by 2
 \ifodd\topw \def\za{}\else\def\za{\relax}\fi}%
 \next{#6}%
 \def\next[##1`##2`##3`##4`##5`##6`##7`##8`##9]{%
 \def\nodea{##1}\def\nodeb{##2}\def\nodec{##3}%
 \def\noded{##4}\def\nodee{##5}\def\nodef{##6}%
 \def\nodeg{##7}\def\nodeh{##8}\def\nodei{##9}}%
 \next[#8]%
 \def\next[##1`##2`##3`##4`##5`##6`##7]{%
 \def\labela{##1}\def\labelb{##2}\def\labelc{##3}%
 \def\labeld{##4}\def\labele{##5}\def\labelf{##6}\nextt[##7]}%
 \def\nextt[##1`##2`##3`##4`##5`##6]{%
 \def\labelg{##1}\def\labelh{##2}\def\labeli{##3}%
 \def\labelj{##4}\def\labelk{##5}\def\labell{##6}}%
 \next[#9]%
 \def\next/##1`##2`##3`##4`##5`##6`##7/{%
\morphism(\xpos,\ypos)|\xe|/{##5}/<\deltax,0>[\nodeg`\nodeh;\labele]%
 \ifx\zi\empty\relax \morphism(\xpos,\ypos)||/<-/<-\deltaX,0>[\nodeg`0;]\fi
 \ifx\zd\empty\relax \morphism(\xpos,\ypos)||<0,-\deltaY>[\nodeg`0;]\fi
 \advance\xpos by \deltax
 \morphism(\xpos,\ypos)|\xf|/{##6}/<\deltax,0>[\nodeh`\nodei;\labelf]%
 \ifx\ze\empty\relax \morphism(\xpos,\ypos)||<0,-\deltaY>[\nodeh`0;]\fi
 \advance\xpos by \deltax
 \ifx\zf\empty\relax \morphism(\xpos,\ypos)||<0,-\deltaY>[\nodei`0;]\fi
 \ifx\zl\empty\relax \morphism(\xpos,\ypos)||<\deltaX,0>[\nodei`0;]\fi
 \advance\ypos by \deltay
 \ifx\zk\empty\relax \morphism(\xpos,\ypos)||<\deltaX,0>[\nodef`0;]\fi
 \advance\xpos by -\deltax
 \morphism(\xpos,\ypos)|\xd|/{##4}/<\deltax,0>[\nodee`\nodef;\labeld]%
 \advance\xpos by -\deltax
 \morphism(\xpos,\ypos)|\xc|/{##3}/<\deltax,0>[\noded`\nodee;\labelc]%
 \ifx\zh\empty\relax \morphism(\xpos,\ypos)||/<-/<-\deltaX,0>[\noded`0;]\fi
 \advance\ypos by \deltay
 \morphism(\xpos,\ypos)|\xa|/{##1}/<\deltax,0>[\nodea`\nodeb;\labela]%
 \ifx\zg\empty\relax \morphism(\xpos,\ypos)||/<-/<-\deltaX,0>[\nodea`0;]\fi
 \ifx\za\empty\relax \morphism(\xpos,\ypos)||/<-/<0,\deltaY>[\nodea`0;]\fi
 \advance\xpos by \deltax
 \morphism(\xpos,\ypos)|\xb|/{##2}/<\deltax,0>[\nodeb`\nodec;\labelb]%
 \ifx\zb\empty\relax \morphism(\xpos,\ypos)||/<-/<0,\deltaY>[\nodeb`0;]\fi
 \advance\xpos by \deltax
 \ifx\zc\empty\relax \morphism(\xpos,\ypos)||/<-/<0,\deltaY>[\nodec`0;]\fi
 \ifx\zj\empty\relax \morphism(\xpos,\ypos)||<\deltaX,0>[\nodec`0;]\fi
 \nextt/##7/}%
 \def\nextt/##1`##2`##3`##4`##5`##6/{%
 \morphism(\xpos,\ypos)|\xi|/{##3}/<0,-\deltay>[\nodec`\nodef;\labeli]%
 \advance\xpos by -\deltax
 \morphism(\xpos,\ypos)|\xh|/{##2}/<0,-\deltay>[\nodeb`\nodee;\labelh]%
 \advance\xpos by -\deltax
 \morphism(\xpos,\ypos)|\xg|/{##1}/<0,-\deltay>[\nodea`\noded;\labelg]%
 \advance\ypos by -\deltay
 \morphism(\xpos,\ypos)|\xj|/{##4}/<0,-\deltay>[\noded`\nodeg;\labelj]%
 \advance\xpos by \deltax
 \morphism(\xpos,\ypos)|\xk|/{##5}/<0,-\deltay>[\nodee`\nodeh;\labelk]%
 \advance\xpos by \deltax
 \morphism(\xpos,\ypos)|\xl|/{##6}/<0,-\deltay>[\nodef`\nodei;\labell]}%
 \next/#4/\ignorespaces}
\def\iiixiiip(#1){\ifnextchar|{\iiixiiipp(#1)}%
  {\iiixiiipp(#1)|aammbblmrlmr|}}%
\def\iiixiiipp(#1)|#2|{\ifnextchar/{\iiixiiippp(#1)|#2|}%
    {\iiixiiippp(#1)|#2|/>`>`>`>`>`>`>`>`>`>`>`>/}}%
\def\iiixiiippp(#1)|#2|/#3/{%
    \ifnextchar<{\iiixiiipppp(#1)|#2|/#3/}%
    {\iiixiiipppp(#1)|#2|/#3/<\default,\default>}}%
\def\iiixiiipppp(#1)|#2|/#3/<#4>{\ifnextchar[{\iiixiiippppp(#1)|#2|/#3/%
   <#4>0<0,0>}{\iiixiiippppp(#1)|#2|/#3/<#4>}}%
\def\iiixiiippppp(#1)|#2|/#3/<#4>#5{\ifnextchar<%
   {\iiixiiipppppp(#1)|#2|/#3/<#4>{#5}}%
   {\iiixiiipppppp(#1)|#2|/#3/<#4>{#5}<400,400>}}%
\def\iiixiipppppp(#1,#2)|#3|/#4/<#5>#6<#7>[#8;#9]{%
 \xpos#1\ypos#2\relax
 \def\next|##1##2##3##4##5##6##7|{\def\xa{##1}\def\xb{##2}%
 \def\xc{##3}\def\xd{##4}\def\xe{##5}\def\xf{##6}\def\xg{##7}}%
 \next|#3|%
 \def\next<##1,##2>{\deltax##1\deltay##2}%
 \next<#5>%
 \deltaX#7
 \topw#6
 \def\next{%
 \ifodd\topw \def\za{}\else\def\za{\relax}\fi \divide\topw by 2
 \ifodd\topw \def\zb{}\else\def\zb{\relax}\fi \divide\topw by 2
 \ifodd\topw \def\zc{}\else\def\zc{\relax}\fi \divide\topw by 2
 \ifodd\topw \def\zd{}\else\def\zd{\relax}\fi}%
 \next
 \def\next[##1`##2`##3`##4`##5`##6]{%
 \def\nodea{##1}\def\nodeb{##2}\def\nodec{##3}%
 \def\noded{##4}\def\nodee{##5}\def\nodef{##6}}%
 \next[#8]%
 \def\next[##1`##2`##3`##4`##5`##6`##7]{%
 \def\labela{##1}\def\labelb{##2}\def\labelc{##3}%
 \def\labeld{##4}\def\labele{##5}\def\labelf{##6}\def\labelg{##7}}%
 \next[#9]%
 \def\next/##1`##2`##3`##4`##5`##6`##7/{%
 \ifx\zc\empty\relax\morphism(\xpos,\ypos)<\deltaX,0>[0`\noded;]\fi
 \advance\xpos by\deltaX
 \morphism(\xpos,\ypos)|\xc|/##3/<\deltax,0>[\noded`\nodee;\labelc]%
 \advance\xpos by \deltax
 \morphism(\xpos,\ypos)|\xd|/##4/<\deltax,0>[\nodee`\nodef;\labeld]%
 \advance\xpos by \deltax
 \ifx\zd\empty\relax  \morphism(\xpos,\ypos)<\deltaX,0>[\nodef`0;]\fi
 \advance\xpos by -\deltaX  \advance\xpos by -\deltax
 \advance\xpos by -\deltax  \advance\ypos by \deltay
 \ifx\za\empty\relax\morphism(\xpos,\ypos)<\deltaX,0>[0`\nodea;]\fi
 \advance\xpos by\deltaX
 \morphism(\xpos,\ypos)|\xa|/##1/<\deltax,0>[\nodea`\nodeb;\labela]%
 \morphism(\xpos,\ypos)|\xe|/##5/<0,-\deltay>[\nodea`\noded;\labele]%
 \advance\xpos by \deltax
 \morphism(\xpos,\ypos)|\xb|/##2/<\deltax,0>[\nodeb`\nodec;\labelb]%
 \morphism(\xpos,\ypos)|\xf|/##6/<0,-\deltay>[\nodeb`\nodee;\labelf]%
 \advance\xpos by \deltax
 \morphism(\xpos,\ypos)|\xg|/##7/<0,-\deltay>[\nodec`\nodef;\labelg]%
 \ifx\zb\empty\relax \morphism(\xpos,\ypos)<\deltaX,0>[\nodec`0;]\fi}%
 \next/#4/\ignorespaces}
\def\iiixiip(#1){\ifnextchar|{\iiixiipp(#1)}%
  {\iiixiipp(#1)|aabblmr|}}%
\def\iiixiipp(#1)|#2|{\ifnextchar/{\iiixiippp(#1)|#2|}%
    {\iiixiippp(#1)|#2|/>`>`>`>`>`>`>/}}%
\def\iiixiippp(#1)|#2|/#3/{%
    \ifnextchar<{\iiixiipppp(#1)|#2|/#3/}%
    {\iiixiipppp(#1)|#2|/#3/<\default,\default>}}%
\def\iiixiipppp(#1)|#2|/#3/<#4>{\ifnextchar[{\iiixiippppp(#1)|#2|/#3/%
   <#4>{0}<0>}{\iiixiippppp(#1)|#2|/#3/<#4>}}%
\def\iiixiippppp(#1)|#2|/#3/<#4>#5{\ifnextchar<%
   {\iiixiipppppp(#1)|#2|/#3/<#4>{#5}}%
   {\iiixiipppppp(#1)|#2|/#3/<#4>{#5}<0>}}%
\def\node#1(#2,#3)[#4]{%
\expandafter\gdef\csname x#1\endcsname{#2}%
\expandafter\gdef\csname y#1\endcsname{#3}%
\expandafter\gdef\csname ob#1\endcsname{#4}%
}
\def\arrowp|#1|{\ifnextchar/{\arrowpp|#1|}{\arrowpp|#1|/>/}}
\def\arrowpp|#1|/#2/[#3`#4;#5]{%
\xfinish=\csname x#4\endcsname
\yfinish=\csname y#4\endcsname
\advance\xfinish by -\csname x#3\endcsname
\advance\yfinish by -\csname y#3\endcsname
\morphism(\csname x#3\endcsname,\csname y#3\endcsname)|#1|/#2/%
<\xfinish,\yfinish>[\csname ob#3\endcsname`\csname ob#4\endcsname;#5]%
}
\newtheoremstyle{fact}
     {\topsep}
     {\topsep}
     {\slshape}
     {}
     {\bfseries}
     {}
     { }
     {\thmname{#1}\thmnumber{ #2.}\thmnote{ \rm (#3)}}
\newtheorem{theorem}{Theorem}[section]
\newtheorem*{theorem*}{Theorem} 
\newtheorem{lemma}[theorem]{Lemma}
\newtheorem{corollary}[theorem]{Corollary}
\newtheorem{problem}[theorem]{Problem}
\theoremstyle{definition}
\newtheorem{definition}[theorem]{Definition}
\newtheorem{remark}[theorem]{Remark}
\newtheorem{remarks}[theorem]{Remarks}
\newtheorem*{remark*}{Remark}
\newtheorem{discussion}[theorem]{Discussion}
\newtheorem{notation}[theorem]{Notation}
\newtheorem*{question*}{Question}
\newtheorem*{examples*}{Examples}  
\newtheorem{example}[theorem]{Example}
\newtheorem{examples}[theorem]{Examples}
\newtheorem*{example*}{Example}
\newtheorem*{convention*}{Convention}
\theoremstyle{fact}
\newtheorem{ftheorem}[theorem]{Theorem}
\newtheorem{flemma}[theorem]{Lemma}
\newtheorem{fcorollary}[theorem]{Corollary}
\newenvironment{myromanlist}[1][enumi]{\begin{list}{{\rm (\roman{#1})}}
{\usecounter{#1}\setlength{\labelwidth}{25pt}\setlength{\topsep}{-6pt}
\setlength{\itemsep}{-4pt} \setlength{\leftmargin}{25pt}}}{\end{list}}
\newenvironment{myalphlist}[1][enumi]{\begin{list}{{\rm (\alph{#1})}}
{\usecounter{#1}\setlength{\labelwidth}{25pt}\setlength{\topsep}{-6pt}
\setlength{\itemsep}{-4pt} \setlength{\leftmargin}{25pt}}}{\end{list}}
\def\proofont{\fontseries{bx}\fontshape{sc}\selectfont}
\def\proofname{Proof.}
\newcommand{\pcite}[2]{{\cite[#1]{#2}}}
\newcommand{\Note}[1]{}
\renewenvironment{proof}[1][\proofname]{\par
  \normalfont
  \topsep6\p@\@plus6\p@ \trivlist
  \item[\hskip\labelsep\noindent\proofont #1]\ignorespaces
}{%
  \qed\endtrivlist
}
\titleformat*{\section}{\normalsize\bfseries\centering}
\titleformat*{\subsection}{\normalsize\bfseries}
\titlespacing{\subsection}{0pt}{\topsep}{0.5ex}
\titleformat{\subsection}[runin]{\normalfont\bfseries}{%
\thesubsection.}{0.5ex}{}[.]
\author{W. W. Comfort\thanks{The first author gratefully acknowledges 
generous hospitality from the University of Manitoba in January 2008.} 
{ }and G\'abor Luk\'acs\thanks{The second author gratefully acknowledges 
the 
generous financial 
support received from NSERC and the University of Manitoba,
which enabled him to do this research.}}
\title{Locally precompact groups:\\
(Local) realcompactness and connectedness\thanks{2000 Mathematics Subject 
Classification: Primary 22A05, 54H11; Secondary 22B05, 22C05}}
\begin{document}

\makeatletter
\def\@fnsymbol#1{\ifcase#1\or * \or 1 \or 2  \else\@ctrerr\fi\relax}

\let\mytitle\@title
\chead{\small\itshape W. W. Comfort and G. Luk\'acs / Locally precompact 
groups}
\fancyhead[RO,LE]{\small \thepage}
\makeatother

\maketitle

\def\thanks#1{} 

\thispagestyle{empty}

\begin{abstract}

A theorem of A.~Weil asserts that a topological group embeds as a~(dense) 
subgroup of a~locally compact group if and only if it contains a
non-empty precompact open set; such groups are called {\it locally 
precompact}. Within the class of locally precompact groups, the authors 
classify those groups with the following topological properties:

\begin{myalphlist}

\item
Dieudonn\'e completeness;

\item
local realcompactness;

\item
realcompactness;

\item
hereditary realcompactness;

\item
connectedness;

\item
local connectedness;

\item
zero-dimensionality.

\vskip 6pt
\end{myalphlist}
They also prove that an abelian locally precompact group occurs as the 
quasi-component of a topological group if and only if it is {\em 
precompactly generated}, that is, it is generated
algebraically by a precompact subset.
\end{abstract}

\setcounter{section}{-1}

\section{Introduction}

\label{sect:intro}

A subset $X$ of a topological group $G$ is {\em precompact} if for every 
neighborhood $U$ of the identity in~$G\hspace{-1pt}$, there is a finite 
\mbox{$S \hspace{-2pt} \subseteq \hspace{-2pt}X$} such that \mbox{$X 
\hspace{-2.5pt}\subseteq \hspace{-2pt} (SU) \hspace{-2.5pt}\cap 
\hspace{-2.5pt} (U \hspace{-1pt}S)\hspace{-0.5pt}$}. 
It is easily seen that every subgroup (indeed, every subset)
of a~compact group is precompact. The local 
version of that 
statement, and its converse, are the content of a~theorem of A.~Weil: A 
topological group $G$ embeds as a (dense) subgroup of a locally compact 
group $\widetilde{G}$ if and only if $G$ is {\it locally precompact} in 
the sense that some non-empty open subset of $G$ is precompact (cf. 
\cite{We3}).   For such a group $G\hspace{-1pt}$, the Weil completion 
$\widetilde{G}$ is 
unique in the obvious sense, and it coincides with the two-sided 
completion introduced by  Ra{\u{\i}}kov in 1946  (cf.~\cite{Rai}). 
For background on 
these  completions, see \cite{RoeDie}, \cite{We3}, 
\cite{Rai}, \cite[(4.11)-(4.15)]{HewRos}, and
\cite[Section 1.3]{GLCLTG}.

As the bibliographies in the monographs \cite{HewRos}, 
\cite{HofMor}, \cite{HofMor2}, and \cite{GLCLTG} attest, there 
is a huge literature devoted to the study and characterization of the locally 
compact groups that enjoy additional special topological properties. We 
work here in a parallel vein, but now in the class of locally precompact 
groups. Most of our results, when restricted back to the locally compact 
case, will be unsurprising, and in some cases familiar, to the reader.

The paper is organized as follows. After introductory material in 
\S\ref{sect:prel}, we  characterize in \S\ref{sect:real} those locally 
precompact groups that are  locally 
realcompact (Theorem~\ref{real:thm:locr}); 
they~are~the Dieudonn\'e complete groups, or equivalently,
the groups that are $G_\delta$-closed in their completions. 
In~\S\ref{sect:newHR}, we find 
internal (intrinsic) characterizations of those locally precompact groups 
that are hereditarily realcompact (Theorems~\ref{newHR:thm:general}), 
while in \S\ref{sect:conn} we address the 
relations among
connectedness properties of locally precompact groups (emphasizing the 
locally pseudocompact case) and their completions; here, the principal 
result is that a locally pseudocompact group is locally connected~if and 
only~if~its completion is locally connected 
(Theorem~\ref{conn:thm:lcpsLocCon}). \S\ref{sect:LPAq} is devoted to 
proving that within 
the class of locally precompact abelian groups, the groups $A$ that are 
topologically \mbox{isomorphic} to a~group of the form 
\mbox{$(\widetilde{G})_0\hspace{-2.5pt} \cap\hspace{-2pt} G$}
with $G$ locally pseudocompact are exactly the 
precompactly~\mbox{generated}
groups (Theorem~\ref{LPAq:thm:main});  thus, in particular, every 
connected precompact abelian group $A$ is topologically isomorphic to the 
connected component of a pseudocompact group. That theorem was established 
in~\cite[7.6]{ComfvMill2} when $A$ in addition is torsion-free, and was 
developed further in \cite[3.6]{Dikconpsc}.

\section{Definitions, notations, and preliminaries}

\label{sect:prel}

All topological spaces here are assumed to be Tychonoff.
Except when specifically noted, no algebraic assumptions are 
imposed on the groups; in particular, our groups are not necessarily 
abelian. A ``neighborhood" of a point means an {\em open} set containing 
the point. The collection of neighborhoods of the identity in a 
topological group $G$ is denoted by $\mathcal{N}(G)$. The next theorem  
explains the origin of the term precompact, and  relates it to the 
completion.

\begin{ftheorem}[\pcite{3.5}{GLCLTG}]  \label{prel:thm:charpr}
Let $G$ be a topological group, and \mbox{$X \hspace{-2.5pt}
\subseteq \hspace{-2pt}G$}~a~\mbox{subset}.
Then $X\hspace{-1.1pt}$ is precompact if and only if
$\operatorname{cl}_{\widetilde G} X$ is compact.
\end{ftheorem}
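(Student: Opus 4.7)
The plan is to use the density of $G$ in the completion $\widetilde G$ together with the fact that $\widetilde G$ is a regular topological group in which every neighborhood of the identity restricts to (and extends from) a neighborhood of the identity in $G$. This lets us pass back and forth between the defining covering condition in $G$ and compactness conditions in $\widetilde G$.

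For the direction $(\Leftarrow)$, I would fix an arbitrary $U\in\mathcal{N}(G)$ and pick $V\in\mathcal{N}(\widetilde G)$ with $V\cap G=U$, then choose a symmetric $W\in\mathcal{N}(\widetilde G)$ with $W\cdot W\subseteq V$. Since $K:=\operatorname{cl}_{\widetilde G}X$ is compact, finitely many translates $t_1W,\ldots,t_nW$ (with $t_i\in K$) cover $K$. Density of $G$ in $\widetilde G$, applied to $t_i\in\operatorname{cl}_{\widetilde G}X$, yields points $s_i\in X\cap t_iW$. For any $x\in X$, if $x\in t_jW$, then $s_j^{-1}x\in W^{-1}W=W\cdot W\subseteq V$, and since $s_j^{-1}x\in G$ this forces $s_j^{-1}x\in U$. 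Hence $X\subseteq SU$ with $S=\{s_1,\ldots,s_n\}\subseteq X$, and the symmetric argument on the other side gives $X\subseteq US$.

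For the direction $(\Rightarrow)$, since $\widetilde G$ is complete, it suffices to show that $\operatorname{cl}_{\widetilde G}X$ is totally bounded in $\widetilde G$. Given $V\in\mathcal{N}(\widetilde G)$, regularity of $\widetilde G$ produces $W\in\mathcal{N}(\widetilde G)$ with $\operatorname{cl}_{\widetilde G}W\subseteq V$. Setting $U:=W\cap G\in\mathcal{N}(G)$ and invoking precompactness of $X$, I obtain a finite $S\subseteq X$ with $X\subseteq SU$. Taking closures and using that left translation in $\widetilde G$ is a homeomorphism (so closures commute with the finite union $\bigcup_{s\in S}sU$), we get
\[
\operatorname{cl}_{\widetilde G}X\subseteq\operatorname{cl}_{\widetilde G}(SU)=S\cdot\operatorname{cl}_{\widetilde G}U\subseteq S\cdot V,
\]
so $\operatorname{cl}_{\widetilde G}X$ is covered by finitely many translates of $V$, as needed.

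The main bookkeeping point, I expect, is handling the interplay between the topologies of $G$ and $\widetilde G$ cleanly — specifically, confirming that neighborhoods of the identity in $G$ are exactly the restrictions of neighborhoods in $\widetilde G$, and using density to place the approximating points $s_i$ inside $X$ rather than merely inside $\widetilde G$ — so that the two-sided condition $X\subseteq(SU)\cap(US)$ emerges symmetrically. Once these details are set, the rest reduces to routine manipulation of symmetric neighborhoods and a single application of regularity.
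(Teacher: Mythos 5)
The paper does not prove this statement --- it is quoted as a known fact from \pcite{3.5}{GLCLTG} --- so there is no in-paper argument to compare against; judged on its own, your proof is correct and is essentially the standard one. Both directions are sound: in $(\Leftarrow)$ the only bookkeeping you elide is that the left and right coverings may produce different finite subsets of $X$, so one should take their union to get $X\subseteq(SU)\cap(US)$ for a single $S$; in $(\Rightarrow)$ you quietly rely on the standard facts that the covering condition $X\subseteq(SU)\cap(US)$ is equivalent to total boundedness for the two-sided uniformity and that a closed, totally bounded subset of the Ra\u{\i}kov-complete group $\widetilde G$ is compact --- both legitimate to invoke, and the rest (translates of neighborhoods, density to place the $s_i$ in $X$, closures commuting with finite unions) checks out.
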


For a space $X$, we denote by $\beta X$  and $\upsilon X$ its 
Stone-\v{C}ech compactification and Hewitt realcompactification, 
respectively
(cf.~\cite[6.5,~8.4]{GilJer} and \cite[3.6.1, 3.11.16]{Engel6}).
A {\em $G_\delta$-subset}~\hspace{0.5pt}of a space $(X,\mathcal{T})$ is 
a set of the form $\bigcap\limits_{n < \omega} \hspace{-1.5pt} U_n$ with 
each $U_n \hspace{-2pt}\in \hspace{-2pt} \mathcal{T}\hspace{-1.5pt}$.
The {\em $G_\delta$-topology}  
on~$X$ is the topology generated by the $G_\delta$-subsets of 
$(X,\mathcal{T})$. A subset of $X$ is 
{\em  $G_\delta$-open} (respectively, {\em $G_\delta$-closed}, {\em 
$G_\delta$-dense}) if it is open (respectively, closed, dense) in 
the $G_\delta$-topology on $G\hspace{-1pt}$.

\begin{definition} \label{prel:def:pscp}
A space $X$ is {\em pseudocompact} if it satisfies the 
following equivalent conditions:

\begin{myromanlist}

\item
every continuous real-valued map on $X$ has bounded range;

\item
every locally finite family  of non-empty open subsets of $X$ is finite;

\item
$X$ is $G_\delta$-dense in $\beta X$.

\end{myromanlist}
\end{definition}

Definition~\ref{prel:def:pscp}(i) was introduced by Hewitt, who 
established the equivalence of (i) and (iii) (cf. \cite{HewRVCF} and 
\cite[1.4]{GilJer}). The equivalence of conditions (i) and (ii)
was shown by Glicksberg (cf.~\cite[Theorem~2]{Glick3} 
and~\mbox{\cite[3.10.22]{Engel6}}).

\begin{definition}
A topological group $G$ is said to be {\em locally pseudocompact} if there 
is \mbox{$U\hspace{-2.5pt}\in\hspace{-2pt} \mathcal{N}(G)$} such that 
$\operatorname{cl}_G U$ is pseudocompact. 
\end{definition}

Since every pseudocompact subset 
of a topological group is precompact
(cf. \cite[1.1]{ComfRoss2} and \cite[1.11]{ComfTrig}), every locally 
pseudocompact group is locally precompact.
Numerous equivalent  definitions of local pseudocompactness are provided 
in Theorem~\ref{prel:thm:lcps} below,  which summarizes the main results 
of \cite{ComfTrig}. (The paper \cite{ComfTrig} generalizes to the local 
context the results of \cite{ComfRoss2}.)

\begin{ftheorem}[\cite{ComfTrig}] \label{prel:thm:lcps}
Let $G$ be a topological group. The following statements are equivalent:

\begin{myromanlist}

\item
$G$ is locally pseudocompact;

\item
for every $V \hspace{-2.5pt}\in\hspace{-2pt} \mathcal{N}(G)$, there is 
$U\hspace{-2.5pt}\in\hspace{-2pt} \mathcal{N}(G)$ such that 
$\operatorname{cl}_G U$ is pseudocompact and 
$\operatorname{cl}_G U \hspace{-2.5pt}\subseteq\hspace{-2pt} V$;

\item
there is $U\hspace{-2.5pt}\in\hspace{-2pt} \mathcal{N}(G)$ such that
$\beta (\operatorname{cl}_G U) =\operatorname{cl}_{\widetilde G} U$;

\item
$G$ is locally precompact,  and
$\beta (\operatorname{cl}_G U) =\operatorname{cl}_{\widetilde G} U$
for every precompact $U\hspace{-2.5pt}\in\hspace{-2pt} \mathcal{N}(G)$;

\item
$G$ is locally precompact, and $\beta G = \beta \widetilde G$;

\item
$G$ is locally precompact, and $\upsilon G = \upsilon \widetilde G$;

\item
$G$ is locally precompact, and $G_\delta$-dense in $\widetilde G$.

\end{myromanlist}
\end{ftheorem}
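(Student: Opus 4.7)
The plan is to prove the seven-way equivalence by closing a cycle of implications, using Theorem~\ref{prel:thm:charpr} and Definition~\ref{prel:def:pscp}(iii) as the dictionary between group-theoretic and Stone-\v{C}ech language. I would take the cycle $(i)\Leftrightarrow(iii)\Rightarrow(iv)\Rightarrow(v)\Rightarrow(vi)\Rightarrow(vii)\Rightarrow(i)$, handling $(i)\Leftrightarrow(ii)$ separately. For $(i)\Leftrightarrow(iii)$: by Theorem~\ref{prel:thm:charpr}, $U\in\mathcal{N}(G)$ is precompact iff $\operatorname{cl}_{\widetilde G}U$ is compact, so $\operatorname{cl}_{\widetilde G}U$ is a Hausdorff compactification of $\operatorname{cl}_G U$; and a dense subspace of a compact Hausdorff space is pseudocompact iff it is $G_\delta$-dense, iff the ambient compactum is its Stone-\v{C}ech compactification. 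Implication $(ii)\Rightarrow(i)$ is trivial; for $(i)\Rightarrow(ii)$, choose a pseudocompact $U_0\in\mathcal{N}(G)$, and given $V\in\mathcal{N}(G)$ pick $W\in\mathcal{N}(G)$ with $\operatorname{cl}_G W\subseteq V\cap U_0$, so that $\operatorname{cl}_G W$ is a regular-closed subset of the pseudocompact space $\operatorname{cl}_G U_0$ and hence pseudocompact.

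For $(iii)\Rightarrow(iv)$: local precompactness of $G$ is immediate from compactness of $\operatorname{cl}_{\widetilde G}U$. For an arbitrary precompact $V\in\mathcal{N}(G)$, cover $V$ by finitely many translates of the pseudocompact $U$ from~(iii); then $\operatorname{cl}_G V$ is a regular-closed subset of a finite union of pseudocompact translates of $\operatorname{cl}_G U$, hence pseudocompact, and the equality $\beta(\operatorname{cl}_G V)=\operatorname{cl}_{\widetilde G}V$ follows just as in $(i)\Leftrightarrow(iii)$. The step $(iv)\Rightarrow(v)$, which I expect to be the hardest, proceeds by showing $G$ is $C^*$-embedded in $\widetilde G$: given a bounded continuous $f:G\to\mathbb{R}$, extend it locally on each compact neighborhood $\operatorname{cl}_{\widetilde G}(xU)$ (for $x\in G$) using $\beta(x\operatorname{cl}_G U)=\operatorname{cl}_{\widetilde G}(xU)$ from~(iv), and glue these local extensions using density of $G$ in $\widetilde G$ and the joint continuity of multiplication.

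The remaining arrows are smoother. $(v)\Rightarrow(vi)$ follows because $\upsilon X$ is intrinsically determined inside $\beta X$ as the smallest subspace containing $X$ in which $X$ is $G_\delta$-dense, so $\beta G=\beta\widetilde G$ forces $\upsilon G=\upsilon\widetilde G$. $(vi)\Rightarrow(vii)$ is the observation that $\widetilde G\subseteq\upsilon\widetilde G=\upsilon G$ combined with $G_\delta$-density of $G$ in $\upsilon G$ yields $G_\delta$-density of $G$ in $\widetilde G$. For $(vii)\Rightarrow(i)$, pick a precompact $U\in\mathcal{N}(G)$; then $\operatorname{cl}_G U=G\cap\operatorname{cl}_{\widetilde G}U$ inherits $G_\delta$-density in the compactum $\operatorname{cl}_{\widetilde G}U$, so $\operatorname{cl}_G U$ is pseudocompact. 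The main obstacle will be the gluing step in $(iv)\Rightarrow(v)$: the local Stone-\v{C}ech extensions over different translates must be shown compatible on overlaps, which is where the uniform strength of~(iv), valid for \textit{every} precompact neighborhood, is essential rather than the single-neighborhood datum of~(iii).
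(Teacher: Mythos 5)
A preliminary point: the paper gives no proof of this statement at all --- it is imported as a ``fact theorem'' from \cite{ComfTrig} (the whole point of quoting it is that its proof is the main content of that earlier paper) --- so there is no internal argument to compare yours against. Judged on its own terms, your outline is sensible in shape but has a genuine gap at its first and most heavily reused step.

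You assert that for a dense subspace $Y$ of a compact Hausdorff space $K$ one has ``$Y$ pseudocompact iff $Y$ is $G_\delta$-dense in $K$ iff $K=\beta Y$,'' and you invoke the implication ``$Y$ pseudocompact $\Rightarrow K=\beta Y$'' to get (i)$\Rightarrow$(iii) (with $Y=\operatorname{cl}_G U$ and $K=\operatorname{cl}_{\widetilde G} U$) and again inside (iii)$\Rightarrow$(iv). That implication is false for general spaces: a pseudocompact $Y$ is indeed $G_\delta$-dense in \emph{every} compactification $K$ (pull a nonempty $G_\delta$ of $K$ back along the canonical surjection $\beta Y\rightarrow K$ and use Definition~\ref{prel:def:pscp}(iii)), but $K$ need not be $\beta Y$ --- for any pseudocompact non-compact $Y$ with $|\beta Y\setminus Y|\geq 2$, identifying two points of $\beta Y\setminus Y$ yields a compactification $K\neq\beta Y$ in which $Y$ is still dense, pseudocompact, and $G_\delta$-dense. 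What (i)$\Rightarrow$(iii) actually requires is that $\operatorname{cl}_G U$ be $C^*$-embedded in $\operatorname{cl}_{\widetilde G} U$, and that is precisely the nontrivial, group-theoretic content of \cite{ComfRoss2} and \cite{ComfTrig}: it comes either from uniform continuity of bounded continuous functions on (locally) pseudocompact groups, or from the $Oz$/$z$-embedding machinery that the present paper assembles for other purposes in Lemma~\ref{real:lemma:zups} (Ross--Stromberg plus Blair plus Blair--Hager, or Tkachenko's theorem that $G_\delta$-dense subspaces of $Oz$-spaces are $C$-embedded). Without some such input, the cycle cannot close: general topology of dense subspaces of compacta will not do it. The remainder of your outline is broadly sound --- (i)$\Leftrightarrow$(ii) via regular-closed subsets, the gluing in (iv)$\Rightarrow$(v), and (vii)$\Rightarrow$(i), where the direction you need (``$G_\delta$-dense in a compactum implies pseudocompact'') \emph{is} a correct general fact. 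Two smaller slips: in (v)$\Rightarrow$(vi), $\upsilon X$ is the $G_\delta$-\emph{closure} of $X$ in $\beta X$, not ``the smallest subspace containing $X$ in which $X$ is $G_\delta$-dense'' (the smallest such subspace is $X$ itself), and your deduction of $\upsilon\widetilde G\subseteq\upsilon G$ there quietly presupposes that $\widetilde G$ lies in the $G_\delta$-closure of $G$, i.e.\ essentially condition (vii).
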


Next, for the sake of completeness, we recall a well-known technical lemma 
concerning open subgroups of dense subgroups, which will be used several 
times in this paper. We denote by $\mathcal{H}(G)$ the set of open 
subgroups of a topological group $G\hspace{-1.25pt}$, and we set
\mbox{$o(G) \hspace{-2pt}:= \hspace{-2pt}\bigcap \mathcal{H}(G)$}. We note 
for~emphasis  that in Lemma~\ref{prel:lemma:osubgrp}, no normality 
conditions are imposed on any subgroups.

\begin{lemma} \label{prel:lemma:osubgrp}
Let $G$ be a topological group, and $D$ a dense subgroup. Then: 

\begin{myalphlist}

\item
the maps
\begin{align}
\Phi \colon \mathcal{H}(G) & \longrightarrow \mathcal{H}(D) & 
\Psi\colon \mathcal{H}(D) & \longrightarrow \mathcal{H}(G) \\
M  & \longmapsto M \cap D  &
H &\longmapsto \operatorname{cl}_G H 
\end{align}
satisfy
\mbox{$\Psi\circ\Phi =\operatorname{id}_{\mathcal{H}(G)}$} and 
\mbox{$\Phi\circ\Psi=\operatorname{id}_{\mathcal{H}(D)}$}, and thus
they are order-preserving bijections;


\item
for every $M\in \mathcal{H}(G)$, one has $|G/M| = |D/(M\cap D)|$;

\item
for every $H \in \mathcal{H}(D)$, one has
$|G/\operatorname{cl}_G H | = |D/H|$;

\item
\mbox{$o(D)\hspace{-2pt}=\hspace{-2pt}
o(G) \hspace{-2pt}\cap \hspace{-2pt} D\hspace{-1pt}$}. \qed

\end{myalphlist}

\end{lemma}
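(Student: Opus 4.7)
The plan is to reduce everything to part~(a), since parts (b)--(d) are routine once the bijection is in hand. The one fact used throughout is that an open subgroup of a topological group is automatically clopen (being the complement of the union of its other cosets).

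First I would check that the maps $\Phi$ and $\Psi$ take values where advertised. For $\Phi$, this is immediate: the intersection of an open subgroup of $G$ with $D$ is an open subgroup of $D$. For $\Psi$, given $H \in \mathcal{H}(D)$, the closure $\operatorname{cl}_G H$ is automatically a subgroup of $G$; to see it is \emph{open}, pick $V \in \mathcal{N}(G)$ with $V \cap D \subseteq H$, and observe that density of $D$ in $G$ gives $V \subseteq \operatorname{cl}_G(V \cap D) \subseteq \operatorname{cl}_G H$, so $\operatorname{cl}_G H$ contains a neighborhood of the identity.

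Next I would verify the two composition identities of part~(a). The easier one is $\Psi \circ \Phi = \operatorname{id}_{\mathcal{H}(G)}$: if $M \in \mathcal{H}(G)$, then $M$ is clopen, $M \cap D$ is dense in $M$, and so $\operatorname{cl}_G(M \cap D) = M$. For $\Phi \circ \Psi = \operatorname{id}_{\mathcal{H}(D)}$, set $K := \operatorname{cl}_G H$; then $\operatorname{cl}_{K \cap D} H = (\operatorname{cl}_G H) \cap (K \cap D) = K \cap D$, so $H$ is dense in $K \cap D$; but $H$ is also open, hence closed, in $K \cap D$, forcing $H = K \cap D$. This last half-step is the sole delicate point; everything else is formal. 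Order-preservation is obvious from the definitions.

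Parts (b)--(d) then fall out. For (b), the assignment $d(M \cap D) \mapsto dM$ is a well-defined injection $D/(M\cap D) \hookrightarrow G/M$, and it is surjective because every coset $gM$ is open and therefore meets the dense set~$D$. Part (c) is (b) applied to $M = \operatorname{cl}_G H$, together with $M \cap D = H$ from (a). Part (d) is the chain $o(G) \cap D = \bigcap_{M \in \mathcal{H}(G)}(M \cap D) = \bigcap_{H \in \mathcal{H}(D)} H = o(D)$, whose middle equality is exactly the bijection of~(a). The main obstacle, such as it is, is resisting the temptation to argue $\Phi \circ \Psi = \operatorname{id}$ purely set-theoretically; the clean route goes through the ``open-dense subgroup'' observation.
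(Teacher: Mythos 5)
Your proof is correct and complete; every step checks out, including the one genuinely delicate point (that $H$, being open and hence closed in the subgroup $\operatorname{cl}_G H\cap D$ in which it is dense, must equal it). The paper itself states this lemma without proof, as a well-known fact, so there is nothing to compare against; your argument is the standard one the authors evidently have in mind.
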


\section{Local and global realcompactness and  Dieudonn\'e completeness}

\label{sect:real}

\begin{definition} \mbox{ } \label{real:def:rc+DC}

\begin{myalphlist}

\item
A space is {\it realcompact} if it is homeomorphic to a closed subspace of 
$\mathbb{R}^\lambda$ for some cardinal $\lambda$.

\item
A space is {\it Dieudonn\'e complete} if it is homeomorphic to a closed 
subspace of a space of the form 
\mbox{$\prod\limits_{\alpha\in I}\hspace{-3pt} M_\alpha$}
with each $M_\alpha$ metrizable.

\end{myalphlist}

\end{definition}

\begin{remarks} \mbox{ } \label{real:rem:rcvsDc}

\begin{myalphlist}

\item
It is clear from Definition~\ref{real:def:rc+DC} that every realcompact
space is Dieudonn\'e complete. For limitations concerning the converse
statement, see Discussion~\ref{real:disc:Ulam} and 
Corollary~\ref{real:cor:rc=Dc?} 
below.

\item
Responding to a question posed in the fundamental memoire of
Weil~\cite{We3}, Dieudonn\'e  proved that a
space has a compatible complete uniformity if and only if it
is (in our terminology) Dieudonn\'e complete 
(cf.~\cite[p.~286]{dieu39}).  For this reason,
\pagebreak[3]
many authors prefer to call such spaces {\it topologically complete}
(cf.~\cite{GilJer} and~\cite{ComfNegB2}). The class has
been studied broadly,~for example, by Kelley 
(cf.~\cite[Chapter~15]{Kelley})
and Isbell (cf.~\cite[I.10-22]{Isb}).

\item
It is obvious from the definitions that a product of realcompact 
(respectively, Dieudonn\'e \mbox{complete}) spaces is \mbox{realcompact} 
(respectively, Dieudonn\'e complete), and that a closed subspace of a 
realcompact (respectively, Dieudonn\'e complete) space is realcompact 
(respectively, Dieudonn\'e complete). Since the intersection 
\mbox{$\bigcap\limits_{\alpha\in I} \hspace{-3pt} A_\alpha$} of subspaces 
of any (fixed) space is homeomorphic to a closed \mbox{subspace}~of
\mbox{$\prod\limits_{\alpha\in I} \hspace{-3pt} A_\alpha$},
it is further immediate from the
definitions that in any space $Y\hspace{-2pt}$, each
subspace of the form 
\mbox{$\bigcap\limits_{\alpha\in I} \hspace{-3pt} A_\alpha$}
with each $A_\alpha$ a~realcompact
(respectively, Dieudonn\'e complete) subspace of $Y$
is itself realcompact (respectively, Dieudonn\'e complete).

\vspace{6pt}

\end{myalphlist}
The statements given in the next theorems, which are all basic in the
study of realcompact spaces and of Dieudonn\'e complete spaces, are less
obvious; we will rely on these properties in what follows.
\end{remarks}

\begin{ftheorem} \label{real:thm:bas}
\mbox{ }

\begin{myalphlist}

\item
{\rm (\cite[2.3]{ComfHernTrig})}
Every $G_\delta$-closed subspace of a realcompact
{\rm (\!}respectively Dieudonn\'e complete{\rm )} space is 
realcompact {\rm (\!}respectively, Dieudonn\'e complete{\rm )}.

\item
{\rm (\cite[8.2]{GilJer}, \cite[3.11.12]{Engel6})}
Every Lindel\"of space is realcompact.

\item
{\rm (\cite[3.6]{ComfHernTrig})}
Every locally compact topological group is Dieudonn\'e complete.

\end{myalphlist}
\end{ftheorem}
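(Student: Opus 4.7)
The three parts rely on different classical tools, and I would treat them separately.

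For (a), the strategy is to express the $G_\delta$-closed subspace $Y\subseteq X$ as an intersection of realcompact (respectively, Dieudonné complete) subspaces, so that the closure property in Remark~\ref{real:rem:rcvsDc}(c) applies. Closedness of $Y$ in the $G_\delta$-topology of $X$ means that $Y=\bigcap_\alpha F_\alpha$ where each $F_\alpha$ is an $F_\sigma$-set of $X$, say $F_\alpha=\bigcup_{n<\omega}C_{\alpha,n}$ with each $C_{\alpha,n}$ closed in $X$. Each $C_{\alpha,n}$ is realcompact (resp.\ Dieudonné complete) by Remark~\ref{real:rem:rcvsDc}(c). The main step is the classical lemma that a countable union of closed realcompact (resp.\ Dieudonné complete) subspaces of a given space is itself realcompact (resp.\ Dieudonné complete); granting this, each $F_\alpha$ inherits the property, and a second application of Remark~\ref{real:rem:rcvsDc}(c) to the intersection $\bigcap_\alpha F_\alpha=Y$ delivers the conclusion. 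I expect this countable-union lemma to be the main technical obstacle; a plausible alternative is to embed $X$ closedly in $\mathbb{R}^\lambda$ (or in a product of metric spaces), note that $Y$ is then $G_\delta$-closed in that product, and verify directly that $G_\delta$-closed subspaces of such products are realcompact (resp.\ Dieudonné complete) by exploiting that any $G_\delta$ in such a product is locally controlled by countably many coordinates.

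For (b), given a Lindelöf space $X$, I would show that $X=\upsilon X$ by producing, for each $p\in\beta X\setminus X$, a zero-set of $\beta X$ that contains $p$ and misses $X$. For each $x\in X$, normality of $\beta X$ yields a zero-set $Z_x$ of $\beta X$ with $p\in Z_x$ and $x\notin Z_x$; the family $\{\beta X\setminus Z_x : x\in X\}$ is then an open cover of $X$, so Lindelöfness extracts a countable subcover $\{\beta X\setminus Z_{x_n}\}_{n<\omega}$. The set $Z:=\bigcap_{n<\omega}Z_{x_n}$ is a zero-set of $\beta X$ that contains $p$ and is disjoint from $X$, witnessing $p\notin\upsilon X$.

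For (c), I would invoke Dieudonné's characterization recorded in Remark~\ref{real:rem:rcvsDc}(b): a space is Dieudonné complete precisely when it admits a compatible complete uniformity. On every topological group the two-sided (Ra{\u{\i}}kov) uniformity is compatible with the topology, and on a locally compact group it is complete, since Weil's theorem (recalled in the introduction) identifies such a group with its Ra{\u{\i}}kov completion $\widetilde G$. Hence a locally compact group carries a compatible complete uniformity and is Dieudonné complete.
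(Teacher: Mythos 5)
The paper offers no proof of this statement---it is quoted as a fact with citations---so I am measuring your attempt against the standard arguments behind those citations. Parts (b) and (c) of your proposal are correct. For (b) you reproduce exactly the classical argument: separate $p$ from each $x\in X$ by a zero-set of $\beta X$, extract a countable subcover by Lindel\"ofness, and intersect to get a zero-set of $\beta X$ through $p$ missing $X$, so that $\upsilon X=X$. For (c) your route through Remark~\ref{real:rem:rcvsDc}(b) is sound and is a legitimate alternative to deducing the result from paracompactness of locally compact groups: the two-sided uniformity is compatible with the topology, and its completeness for locally compact $G$ is precisely the equality $G=\widetilde G$, which holds because a dense locally compact subgroup of $\widetilde G$ is open and hence closed.

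Part (a), however, has a fatal gap: the ``classical lemma'' on which your main route rests---that a countable union of closed realcompact subspaces of a space is realcompact---is \emph{false}. The Isbell--Mr\'owka space \mbox{$\Psi=\omega\cup\mathcal{A}$} built from an infinite maximal almost disjoint family $\mathcal{A}$ on $\omega$ is pseudocompact and non-compact, hence not realcompact; yet it is the union of the countably many clopen compact singletons $\{n\}$ together with the closed discrete subspace $\mathcal{A}$, which is realcompact by Theorem~\ref{real:thm:discrete} because \mbox{$|\mathcal{A}|\leq\mathfrak{c}<\mathfrak{m}$}. So passing from the $G_\delta$-closed set $Y$ to an intersection of $F_\sigma$-sets cannot work. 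The repair is to use zero-sets rather than arbitrary closed sets: since $X$ is Tychonoff, every $G_\delta$-set containing a point $x$ contains a zero-set containing $x$ (take \mbox{$f=\sum 2^{-n}f_n$} with $f_n(x)=0$ and $f_n\equiv 1$ off $U_n$), so the $G_\delta$-open complement $X\setminus Y$ is a union of zero-sets and $Y$ is an intersection of \emph{cozero}-sets of $X$. A cozero-set $\{f\neq 0\}$ embeds onto the closed subspace \mbox{$\{(x,t): tf(x)=1\}$} of \mbox{$X\times\mathbb{R}$}, hence is realcompact (respectively, Dieudonn\'e complete) whenever $X$ is, and Remark~\ref{real:rem:rcvsDc}(c) applied to the intersection finishes the proof; this is the argument of the source cited for (a). Your alternative sketch via a closed embedding of $X$ into $\mathbb{R}^\lambda$ can be completed, but only by this same cozero-set device, not by the union lemma.
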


\begin{notation}\label{real:not:ups+gamma}
With each space $X$ are associated spaces $\upsilon X$ and $\gamma X$
defined as follows:

\begin{myalphlist}

\item
\mbox{$\upsilon X \hspace{-1.7pt}:= \hspace{-1pt}
\{p \hspace{-2pt}\in \hspace{-2pt}\beta X \mid \text{each continuous map 
from $X$ to $\mathbb R$ extends continuously to } p\}$;}

\item
\mbox{$\gamma X \hspace{-1.7pt}:= \hspace{-1pt}
\{p\hspace{-2pt}\in\hspace{-2pt} \beta X\mid \text{each 
continuous map from $X$ to a metric space extends continuously to } p\}$.}

\end{myalphlist}
\end{notation}

\begin{ftheorem}[\pcite{Chapter~8}{GilJer}, \pcite{3.11.16, 
8.5.13}{Engel6},
and \pcite{pp.~1-20}{ComfNegB2}]
\label{real:thm:rcpt-tc}
Let $X$ be a space. Then:

\begin{myalphlist}

\item
$\upsilon X$ is realcompact and
\mbox{$\upsilon X\hspace{-2.5pt} =\hspace{-1.25pt}
\bigcap\{X^\prime \mid 
X \hspace{-2pt}\subseteq \hspace{-2pt}  X^\prime
 \hspace{-2pt} \subseteq \hspace{-2pt} \beta X
,\ X^\prime \text{ is realcompact}\}$}; 

\item
$\gamma X$ is Dieudonn\'e complete and
\mbox{$\gamma X\hspace{-2.5pt} =\hspace{-1.25pt}
\bigcap\{X^\prime \mid 
X \hspace{-2pt}\subseteq \hspace{-2pt}  X^\prime
 \hspace{-2pt} \subseteq \hspace{-2pt} \beta X
,\ X^\prime \text{ is Dieudonn\'e complete}\}$}.

\end{myalphlist}
\end{ftheorem}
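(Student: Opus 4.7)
The plan is to treat (a) and (b) in parallel, since the two parts have essentially the same structure: in each case the ambient class of spaces is closed under closed subspaces and arbitrary products (by Remark~\ref{real:rem:rcvsDc}(c)), and the set $\upsilon X$ (respectively $\gamma X$) is defined precisely as the locus in $\beta X$ where every admissible test map (continuous into $\mathbb{R}$, respectively into a metric space) extends continuously.

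First I would establish that $\upsilon X$ is realcompact by means of an evaluation embedding. By the very definition of $\upsilon X$, each $f \in C(X,\mathbb{R})$ admits a unique continuous extension $f^\upsilon \colon \upsilon X \to \mathbb{R}$, and one forms the map
\begin{equation*}
e \colon \upsilon X \longrightarrow \prod_{f \in C(X,\mathbb{R})} \mathbb{R}, \qquad e(p)(f) = f^\upsilon(p).
\end{equation*}
I would verify that $e$ is a topological embedding whose image is closed: closedness is the decisive point, and one argues it by reading candidate values of $f^\upsilon$ off the coordinates of a limit point $q$ in the product and checking that they assemble into a well-defined point $p \in \beta X$ at which every $f^\beta$ is real-valued, so that $p$ in fact lies in $\upsilon X$. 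This exhibits $\upsilon X$ as a closed subspace of a power of $\mathbb{R}$, hence realcompact by Definition~\ref{real:def:rc+DC}(a). The parallel argument for $\gamma X$ uses the product $\prod_\varphi M_\varphi$ indexed by continuous maps $\varphi \colon X \to M_\varphi$ into metric spaces, combined with Remark~\ref{real:rem:rcvsDc}(c).

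Next I would verify the intersection formula. The inclusion $\upsilon X \supseteq \bigcap \{X' \mid X \subseteq X' \subseteq \beta X,\ X' \text{ realcompact}\}$ is immediate, since $\upsilon X$ itself belongs to the family on the right. For the reverse inclusion, fix a realcompact $X'$ with $X \subseteq X' \subseteq \beta X$ and a point $p \in \beta X \setminus X'$; the goal is to show $p \notin \upsilon X$. Since $X$ is dense and $C^*$-embedded in $\beta X$, it is dense and $C^*$-embedded in the intermediate space $X'$, which by the Stone--\v{C}ech universal property forces $\beta X' = \beta X$. Because $X'$ is realcompact, the definition gives $\upsilon X' = X'$, so the point $p \in \beta X' \setminus X'$ is witnessed by some $g \in C(X',\mathbb{R})$ whose $\beta$-extension is not real-valued at $p$. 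Setting $f := g|_X$, the two $\beta$-extensions $f^\beta$ and $g^\beta$ are continuous maps on $\beta X = \beta X'$ that agree on the dense set $X$, hence everywhere; in particular $f^\beta(p) = g^\beta(p) \notin \mathbb{R}$, confirming $p \notin \upsilon X$. The argument for (b) is verbatim, with $\mathbb{R}$ replaced by arbitrary metric targets.

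The main obstacle I expect is the bookkeeping around $C^*$-embedding and the identification $\beta X = \beta X'$, together with the need to translate correctly between $\beta$-extensions of $g$ and $g|_X$ and between $\beta$- and $\upsilon$-extensions. Once that layer of Stone--\v{C}ech manipulations is firmly in place, the proof reduces to a direct application of the defining properties of $\upsilon X$ and $\gamma X$ recorded in Notation~\ref{real:not:ups+gamma}, together with the closure-under-products-and-closed-subspaces principles collected in Remark~\ref{real:rem:rcvsDc}(c).
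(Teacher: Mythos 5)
This is a background fact that the paper quotes from the literature (Gillman--Jerison, Engelking, Comfort--Negrepontis) without supplying any proof of its own, and your argument is essentially the standard proof found in those cited sources: the evaluation embedding realizing $\upsilon X$ (resp.\ $\gamma X$) as a closed subspace of a product, followed by the intersection formula obtained by identifying $\beta X' = \beta X$ for each intermediate realcompact (resp.\ Dieudonn\'e complete) $X'$ and transporting a non-extendable function down to $X$. The only points to tidy are minor: in (b) the continuous maps into metric spaces form a proper class, so the product must be indexed by a set of representatives (e.g., the closures of the images, whose weights are bounded by the density of $X$); and the step ``$\upsilon X' = X'$ because $X'$ is realcompact'' is a short lemma about closed subspaces of powers of $\mathbb{R}$ rather than a literal consequence of the definition.
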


Realcompact spaces (under the name of {\em $Q$-spaces}) 
as well as the space $\upsilon X$ were introduced~by Hewitt
(cf.~\cite[Definition~12, Theorems~56-60]{HewRVCF}).
Accordingly, the space $\upsilon X$ is called the {\it 
Hewitt realcompactification} of $X$. Similarly, honoring 
Dieudonn\'e, the space $\gamma X$ is called the {\it Dieudonn\'e 
completion} of $X$ (cf.~\cite{dieu39}).

The definitions of realcompactness and Dieudonn\'e completeness 
are similar, yet different.~The distinction is best described by using the 
set-theoretic notion of 
Ulam-measurable cardinals. 
A~cardinal number $\lambda$ is said to be {\em Ulam-measurable} if 
there is a~non-atomic countably additive \mbox{measure}
\mbox{$\mu\colon\mathcal{P}(\lambda)\rightarrow\{0,1\}$} such that 
\mbox{$\mu(\lambda)\hspace{-2pt} = \hspace{-1pt}1$.}
Ulam-measurable 
cardinals are called {\it measurable} in the text~\cite{GilJer}, 
but we follow standard procedure in reserving that term 
for cardinals $\lambda$ with a measure 
\mbox{$\mu\colon\mathcal{P}(\lambda)\twoheadrightarrow\{0,1\}$}
that is \mbox{$<\hspace{-2.5pt}\lambda$}-additive 
in the sense that every 
\mbox{$A\hspace{-2pt}\subseteq\hspace{-2pt}\lambda$} 
with \mbox{$|A|\hspace{-2pt}<\hspace{-2pt}\lambda$}
satisfies \mbox{$\mu(A)\hspace{-2.25pt}=\hspace{-1pt}0\hspace{-0.5pt}$.}

\begin{discussion} \label{real:disc:Ulam}
The existence of Ulam-measurable cardinals cannot be proven in ZFC---that 
is, their non-existence is consistent with the axioms of ZFC 
(cf.~\cite[IV.6.9, VI.4.13]{KunenST}). 
Most set theorists (appear to) believe that the existence of an
Ulam-measurable cardinal is consistent with the axioms of ZFC, but that 
has not 
been established (cf.~\cite{KunenST}).
It is known that an Ulam-measurable cardinal exists if and only 
if an uncountable measurable cardinal exists. Indeed, the first 
Ulam-measurable cardinal $\mathfrak{m}$ (if it exists)
is measurable (cf.~\cite{Ulam}, \cite{Tarski}, and 
\cite[12.5(ii)]{GilJer}). 
\pagebreak[3]
Henceforth, we write 
\mbox{$\lambda \hspace{-2pt}<\hspace{-2.25pt} \mathfrak{m}$} instead of
``$\lambda$ is not Ulam-measurable." Such statements are to be read with 
some good will: If no Ulam-measurable cardinal exists, then the expression
$\lambda<\mathfrak{m}$
is vacuously true for every cardinal $\lambda$.
\end{discussion}

The relevance of Ulam-measurable cardinals to our work is given by the 
following consequence of a theorem of Mackey (cf.~\cite{MackeyMeas}).

\begin{ftheorem}[\pcite{12.2}{GilJer}]
\label{real:thm:discrete}
A discrete space $D$ is realcompact if and only if 
\mbox{$|D|\hspace{-2pt}<\hspace{-2.25pt}\mathfrak{m}$}.
\end{ftheorem}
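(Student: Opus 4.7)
The plan is to establish the now-standard correspondence between points of $\upsilon D \setminus D$ and free countably complete ultrafilters on $D$, and then to translate the existence of such an ultrafilter into the existence of an Ulam measure. Since $D$ is discrete, $\beta D$ is the Stone space of ultrafilters on $D$: each $p \in \beta D$ corresponds to a unique ultrafilter $\mathcal{U}_p := \{A \subseteq D \mid p \in \operatorname{cl}_{\beta D} A\}$, and $p \in D$ exactly when $\mathcal{U}_p$ is fixed. Thus proving $\upsilon D = D$ amounts to ruling out free ultrafilters with a suitable extension property.

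The first key step is to prove the following characterization: $p \in \upsilon D$ if and only if $\mathcal{U}_p$ is \emph{countably complete}, i.e., closed under countable intersections. For the ``if'' direction, given $f\colon D \to \mathbb{R}$, one first observes that the sets $A_n := f^{-1}([-n,n])$ form a countable increasing cover of $D$; countable completeness forces some $A_n \in \mathcal{U}_p$, so $f$ is ``essentially bounded'' along $\mathcal{U}_p$. A standard dyadic bisection argument then produces a decreasing sequence of closed intervals $J_k$ of length $2^{-k}$ with $f^{-1}(J_k) \in \mathcal{U}_p$; their common point $r \in \mathbb{R}$ is the continuous extension of $f$ to $p$. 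For the ``only if'' direction, if $\mathcal{U}_p$ is not countably complete, choose a partition $\{B_n\}_{n < \omega}$ of $D$ with no $B_n$ in $\mathcal{U}_p$ (equivalently, a decreasing sequence from $\mathcal{U}_p$ with empty intersection) and define $f\colon D \to \mathbb{R}$ by $f \equiv n$ on $B_n$; any putative continuous extension at $p$ would have to equal each integer $n$, which is absurd.

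The second step translates countably complete ultrafilters into Ulam measures. If $\mathcal{U}$ is a free countably complete ultrafilter on $D$, then $\mu\colon \mathcal{P}(D) \to \{0,1\}$ given by $\mu(A)=1 \Leftrightarrow A \in \mathcal{U}$ is non-atomic (freeness rules out atoms of measure one, and points have measure zero) and countably additive (by countable completeness applied to complements). Conversely, every Ulam measure on $D$ yields a free countably complete ultrafilter via $\mathcal{U} := \{A \subseteq D \mid \mu(A)=1\}$. This bijection, combined with the standard fact that such an object on $D$ exists precisely when $|D|$ admits an Ulam measure, i.e., precisely when $|D| \geq \mathfrak{m}$, completes the argument: $D$ is realcompact iff $\upsilon D = D$ iff no free countably complete ultrafilter on $D$ exists iff $|D| < \mathfrak{m}$.

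The main obstacle is the ``essentially bounded'' reduction at the start of Step~1, since the naive dyadic argument only works for bounded $f$; once one sees that $\{f^{-1}([-n,n])\}_n$ is a countable cover and invokes countable completeness, everything else is routine. A minor bookkeeping issue is verifying that the countable completeness of $\mathcal{U}_p$ really is equivalent to $\mu_{\mathcal{U}_p}$ being countably additive (not just $\sigma$-complete under intersections of decreasing sequences), but this is immediate since $\mathcal{U}_p$ is an ultrafilter.
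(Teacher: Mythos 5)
The paper does not prove this statement; it is quoted from Gillman--Jerison [12.2] as a classical fact (a consequence of a theorem of Mackey), so there is no internal proof to compare against. Your argument is correct and is essentially the standard proof from that source specialized to discrete spaces: the identification of the points of $\upsilon D$ with the countably complete ultrafilters on $D$, the translation of a free countably complete ultrafilter into a non-atomic countably additive $\{0,1\}$-valued measure and back, and the upward closure of Ulam-measurability (so that ``no such measure on $D$'' is equivalent to $|D|<\mathfrak{m}$) are all handled accurately, including the one genuinely delicate point you flag, namely reducing an unbounded $f$ to a bounded one via the increasing cover $f^{-1}([-n,n])$ before running the bisection argument.
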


Recall that a {\em cellular family} in a space $X$ is a collection of
non-empty, pairwise disjoint open subsets of $X$. The {\em cellularity} of 
$X$ is defined by the relation
\begin{align}
c(X) := 
\sup\{|\mathcal{U}| : \mathcal{U} \text{ is a cellular family in } X\}.
\end{align}
The following consequence of a theorem of  Shirota (cf.~\cite{Shirota})
provides a sufficient condition for Dieudonn\'e complete spaces 
to be realcompact.

\begin{ftheorem}[\pcite{15.20}{GilJer}] \label{real:thm:cell}
If $X$ is Dieudonn\'e complete and 
\mbox{$c(X) \hspace{-2pt} < \hspace{-2.25pt} \mathfrak{m}$}, then
$X$ is realcompact.
\end{ftheorem}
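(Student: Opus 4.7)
The plan is to realize $X$ as a closed subspace of a product of realcompact metric spaces, and then invoke the standard preservation properties of realcompactness. Since $X$ is Dieudonn\'e complete, I would first use the diagonal map of $X$ into $\prod_\alpha M_\alpha$, where $M_\alpha := X/{\sim_\alpha}$ is the metric quotient of $X$ associated to the continuous pseudometric $\sim_\alpha$ and $\alpha$ ranges over all continuous pseudometrics on $X$; the standard construction of the Dieudonn\'e uniformity via continuous pseudometrics shows that this diagonal is a closed embedding. Because each $M_\alpha$ is a continuous image of $X$, any cellular family in $M_\alpha$ pulls back under the quotient map to a cellular family of the same cardinality in $X$, yielding $c(M_\alpha) \leq c(X) < \mathfrak{m}$.

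Next, I would exploit two well-known facts about metric spaces: cellularity coincides with weight for infinite metric spaces (so $w(M_\alpha) < \mathfrak{m}$), and in any topological space the cardinality of a closed discrete subspace is bounded by the weight of the space (each point of the subspace can be assigned a distinct basic open set meeting the subspace only at that point). Hence every closed discrete subspace of $M_\alpha$ has cardinality strictly less than $\mathfrak{m}$, and the metric-space form of Shirota's theorem---a metric space having no closed discrete subspace of Ulam-measurable cardinality is realcompact---yields that each $M_\alpha$ is realcompact.

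Finally, Remarks~\ref{real:rem:rcvsDc}(c) gives that $\prod_\alpha M_\alpha$ is realcompact as a product of realcompact spaces, and that $X$, being a closed subspace of this product, is realcompact as well.

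The hard part will be the appeal to the metric-space case of Shirota's theorem in the middle step, since that genuinely uses the interplay between the $\sigma$-discrete base of a metric space and the Ulam-measurability hypothesis. The other ingredients are comparatively routine: the existence of a Dieudonn\'e embedding whose factors are continuous quotients of~$X$, the coincidence of cellularity and weight for infinite metric spaces, the injection of any closed discrete subspace into a base, and the preservation of realcompactness under products and closed subspaces already recorded in Remarks~\ref{real:rem:rcvsDc}(c).
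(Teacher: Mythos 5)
Your argument is correct, but note that the paper offers no proof of this statement at all: it is recorded as a quoted fact, a consequence of Shirota's theorem cited from \cite{GilJer}, 15.20. So what you have written is a reconstruction of the argument behind the citation rather than an alternative to anything in the paper. As a reconstruction it is sound, and it is essentially the classical Shirota/Gillman--Jerison proof. Two points deserve emphasis. First, you are right to insist on the diagonal embedding into the pseudometric quotients $M_\alpha$ rather than invoking Definition~\ref{real:def:rc+DC}(b) directly: in an arbitrary closed embedding of $X$ into a product of metrizable spaces the factors need not be continuous images of $X$, so the cellularity bound would not transfer to them; the pseudometric construction (justified by Dieudonn\'e's characterization quoted in Remarks~\ref{real:rem:rcvsDc}(b)) is exactly what makes each factor a continuous surjective image, whence $c(M_\alpha)\leq c(X)$. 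Second, the one genuinely non-routine input, as you say, is the metric-space case: that a metric space with no closed discrete subspace of Ulam-measurable cardinality is realcompact. This is the Kat\v{e}tov--Shirota criterion for metric spaces; alternatively, since the first Ulam-measurable cardinal is strongly inaccessible, $w(M_\alpha)<\mathfrak{m}$ already forces $|M_\alpha|\leq 2^{w(M_\alpha)}<\mathfrak{m}$, and one may then quote the weaker and more commonly stated fact that every metric space of non-Ulam-measurable cardinality is realcompact (\cite{GilJer}, 15.24), which meshes with the observation from \cite{ComfNegB2} that the paper itself records just before the statement. The remaining steps (cellularity equals weight for infinite metric spaces, extent bounded by weight, and the closure of realcompactness under products and closed subspaces from Remarks~\ref{real:rem:rcvsDc}(c)) are used correctly.
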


It is easily seen that a metrizable space of
Ulam-measurable cardinality contains a closed, discrete subspace of
Ulam-measurable cardinality (cf.~\cite[6.2]{ComfNegB2}). Thus, 
combining Theorems~\ref{real:thm:discrete} and~\ref{real:thm:cell} 
yields the following useful result.

\begin{fcorollary}[\cite{GilJer}, \cite{ComfNegB2}]\label{real:cor:rc=Dc?}
The following statements are equivalent:

\begin{myromanlist}

\item
there is no Ulam-measurable cardinal;

\item
the class of realcompact spaces coincides with the class of
Dieudonn\'e complete spaces.

\end{myromanlist}
\end{fcorollary}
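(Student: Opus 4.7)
The plan is to prove both directions of the equivalence by combining Theorem~\ref{real:thm:discrete} (discrete realcompactness and Ulam-measurability) with Theorem~\ref{real:thm:cell} (Shirota's sufficient condition), essentially as the text foreshadows.

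For the direction (i)\,$\Rightarrow$\,(ii), I would take an arbitrary Dieudonn\'e complete space $X$ and show it is realcompact. Since every realcompact space is already Dieudonn\'e complete by Remarks~\ref{real:rem:rcvsDc}(a), only one inclusion requires work. If no Ulam-measurable cardinal exists, then by our convention in Discussion~\ref{real:disc:Ulam} the inequality $\lambda<\mathfrak m$ holds vacuously for every cardinal $\lambda$; in particular $c(X)<\mathfrak m$. Theorem~\ref{real:thm:cell} then delivers realcompactness of $X$ at once.

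For the direction (ii)\,$\Rightarrow$\,(i), I would argue the contrapositive: assuming an Ulam-measurable cardinal exists, I will exhibit a Dieudonn\'e complete space that is not realcompact. Let $\lambda$ be Ulam-measurable, and let $D$ be the discrete space of cardinality $\lambda$. Since the discrete metric on $D$ induces its topology, $D$ is metrizable, and every metrizable space is trivially Dieudonn\'e complete (embed it as a closed subspace of a product consisting of itself). On the other hand, $|D|=\lambda$ is Ulam-measurable, so by Theorem~\ref{real:thm:discrete} the space $D$ fails to be realcompact. Hence the two classes do not coincide, and (ii) fails.

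There is no substantive obstacle here: each direction is a one-step deduction once one invokes the correct cited theorem, and the slight care required is merely the bookkeeping around the convention ``$\lambda<\mathfrak m$'' when no Ulam-measurable cardinals are assumed to exist. The remark preceding the corollary (that a metrizable space of Ulam-measurable cardinality contains a closed discrete subspace of Ulam-measurable cardinality) is not strictly needed for the equivalence itself; it is only needed if one wants the stronger statement that \emph{metrizable} Dieudonn\'e complete spaces are already enough to witness the failure of~(ii), which in our argument is already achieved by a discrete example.
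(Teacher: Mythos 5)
Your argument is correct and follows essentially the same route the paper sketches: the paper states this as a known fact and indicates that it follows by combining Theorem~\ref{real:thm:discrete} with Theorem~\ref{real:thm:cell}, which is precisely what you do (using Shirota's condition for (i)\,$\Rightarrow$\,(ii) and a discrete space of Ulam-measurable cardinality as the witness for the contrapositive of (ii)\,$\Rightarrow$\,(i)). Your closing observation that the remark about closed discrete subspaces of metrizable spaces is not strictly needed for the discrete counterexample is also accurate.
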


It is well known that a space is compact if and only if it is 
pseudocompact and realcompact (cf.~\cite[Theorem~54]{HewRVCF}
and~\cite[3.11.1]{Engel6}). Thus, the 
notion of realcompactness is a natural complement to that of 
pseudocompactness. While Theorem~\ref{prel:thm:lcps} provides a complete 
``internal" characterization of locally pseudocompact groups,  we are 
aware of no parallel intrinsic characterization of (locally) realcompact
groups. In this section, we remedy this  deficiency for locally precompact 
groups.  Since every Lindel\"of space is realcompact 
(cf.~\cite[8.2]{GilJer} and~\cite[3.11.12]{Engel6}),
a complete description of realcompact groups is beyond the scope of this 
paper. Our approach is based on an argument that was used in 
\cite[Section~4]{ComfHernTrig}, which we formulate here explicitly.

For a topological space $X$, a {\em zero-set in $X$} is a set of the form 
\mbox{$f^{-1}(0)$,} 
where $f$ is a~real-valued continuous function on 
$X\hspace{-1.5pt}$. A subset \mbox{$Y \hspace{-2pt}\subseteq\hspace{-2pt} 
X$}
is {\em $z$-embedded} in $X$ if for every zero-set 
$Z$~in~$Y\hspace{-1.5pt}$, there is a zero-set $W$ in $X$ such that 
\mbox{$Z\hspace{-2pt}= \hspace{-2pt} W\hspace{-3pt}\cap 
\hspace{-2ptY}\hspace{-2.5pt}$.}
(To our best knowledge, this concept was first introduced into the 
literature by Isbell \cite{Isbell5}, and explicitly by Henriksen and 
Johnson~\cite{HenJohn}; see also Hager~\cite{Hager1} for additional 
citations and applications.)
One says that $X$ is an {\em $Oz$-space} if every open subset of $X$ is 
$z$-embedded (cf.~\cite{Blair}). 
Recall that a subset \mbox{$F\hspace{-2pt} \subseteq 
\hspace{-2pt}X$}
is {\em regular-closed} if 
\mbox{$F\hspace{-2pt}= \hspace{-1pt}  \operatorname{cl}_X 
(\operatorname{int}_X \hspace{-1.5pt}F)$,} or equivalently, if
\mbox{$F \hspace{-2.pt}=\hspace{-1pt} \operatorname{cl}_X \hspace{-1pt}U$}
for an open subset 
\mbox{$U \hspace{-2pt}\subseteq \hspace{-2pt}X\hspace{-1.5pt}$.}
Blair has characterized $Oz$-spaces in several ways.

\begin{ftheorem}[\pcite{5.1}{Blair}] \label{real:thm:Oz}
For every space $X$, the following statements are equivalent:

\begin{myromanlist}

\item
$X$ is an $Oz$-space;

\item
every dense subset of $X$ is $z$-embedded in $X\hspace{-1.5pt}$;

\item
every regular-closed subset of $X$ is a zero-set in $X\hspace{-1.75pt}$.

\end{myromanlist}
\end{ftheorem}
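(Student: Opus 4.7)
The plan is to establish (i) $\Leftrightarrow$ (iii) and (ii) $\Leftrightarrow$ (iii) by two symmetric pairs of arguments that share a common gadget. The gadget is this: if $F$ is regular-closed, so $F = \operatorname{cl}_X U$ with $U$ open, and $V := X \setminus F$, then $V$ is open and the union $D := U \cup V$ is dense in $X$ (since $\operatorname{cl}_X D \supseteq V \cup \operatorname{cl}_X U = V \cup F = X$). Moreover, $D$ is the disjoint union of two open sets, so the function $g\colon D \to \mathbb{R}$ defined by $g|_U = 0$ and $g|_V = 1$ is continuous, with zero-set exactly $U$.

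For the directions (i) $\Rightarrow$ (iii) and (ii) $\Rightarrow$ (iii), I would apply the $z$-embedding hypothesis to $D$: it is open (yielding (i) $\Rightarrow$ (iii)) and dense (yielding (ii) $\Rightarrow$ (iii)). In either case, the hypothesis produces a zero-set $W$ in $X$ with $W \cap D = U$. Then $W \cap V = \emptyset$ forces $W \subseteq F$, while $U \subseteq W$ together with the closedness of $W$ forces $F = \operatorname{cl}_X U \subseteq W$. Hence $W = F$, so $F$ is a zero-set in $X$.

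For the reverse directions (iii) $\Rightarrow$ (i) and (iii) $\Rightarrow$ (ii), I would fix $Y \subseteq X$ (either open, or dense) and a zero-set $Z = f^{-1}(0)$ in $Y$ with $f\colon Y \to [0,1]$ continuous, and approximate $Z$ from outside by zero-sets of $X$. For each positive rational $q$, I would select an open $U_q \subseteq X$ with $U_q \cap Y = f^{-1}([0,q))$ and set $F_q := \operatorname{cl}_X U_q$, which by (iii) is a zero-set in $X$. Then $W := \bigcap_{q \in \mathbb{Q}_{>0}} F_q$ is a zero-set of $X$ as a countable intersection, and the inclusion $Z \subseteq W \cap Y$ is immediate since $f(x) = 0 < q$ places $x$ inside $U_q \subseteq F_q$.

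The reverse inclusion $W \cap Y \subseteq Z$ is the step I expect to be the main obstacle, because it has to go through uniformly when $Y$ is open and when $Y$ is merely dense. The plan is to observe that in both cases $U_q \cap Y$ is dense in $U_q$ (trivially when $Y$ is open; from density of $Y$ and openness of $U_q$ when $Y$ is dense), whence $\operatorname{cl}_X U_q = \operatorname{cl}_X(U_q \cap Y) = \operatorname{cl}_X f^{-1}([0,q))$. Therefore any $x \in Y \cap \operatorname{cl}_X U_q$ lies in $\operatorname{cl}_Y f^{-1}([0,q)) \subseteq f^{-1}([0,q])$ because $f^{-1}([0,q])$ is closed in $Y$. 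Hence $f(x) \leq q$ for every positive rational $q$, giving $f(x) = 0$ and $x \in Z$. This shows $W \cap Y = Z$, so $Y$ is $z$-embedded, completing both implications.
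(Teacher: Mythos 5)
The paper states this result as a fact imported from Blair (the cited \textit{5.1}) and offers no proof of its own, so there is no in-paper argument to compare against; your proposal has to stand on its own merits, and it essentially does. The gadget $D = U \cup V$ with $V = X\setminus F$ is exactly what is needed to make (i) $\Rightarrow$ (iii) and (ii) $\Rightarrow$ (iii) run off a single computation, since $D$ is simultaneously open and dense and $U$ is a zero-set of $D$ (being clopen in $D$); the squeeze $U \subseteq W$, $W\cap V=\emptyset$, and closedness of $W$ then force $W = \operatorname{cl}_X U = F$. The converse direction is the standard approximation of $Z$ from outside by the regular-closed sets $\operatorname{cl}_X U_q$, using that zero-sets are closed under countable intersection. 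The one sentence you must tighten is the parenthetical claim that $U_q \cap Y$ is dense in $U_q$ ``trivially when $Y$ is open'': for an \emph{arbitrary} open $U_q \subseteq X$ with $U_q \cap Y = f^{-1}([0,q))$ this is false, since $U_q$ may contain a spurious open piece disjoint from $\operatorname{cl}_X(U_q\cap Y)$ (take $X=\mathbb{R}$, $Y=(0,1)$, $f\equiv 0$, $U_q=(0,1)\cup(2,3)$). When $Y$ is open you should take $U_q := f^{-1}([0,q))$ itself, which is then open in $X$ and contained in $Y$, so that $U_q\cap Y=U_q$ and the density is genuinely trivial; when $Y$ is merely dense, any choice of $U_q$ works, as you say. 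With that choice pinned down, both (iii) $\Rightarrow$ (i) and (iii) $\Rightarrow$ (ii) go through exactly as you describe.
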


\pagebreak[3]

\begin{ftheorem}

\label{real:thm:BlaHag}

\mbox{ }

\begin{myalphlist}





\item
{\rm (\cite[5.3]{Blair})}
If $X$ is an $Oz$-space and \mbox{$S\hspace{-2pt}\subseteq\hspace{-2pt}X$}
is dense or open or regular-closed, then $S$ is an $Oz$-space.

\item
{\rm (\cite[1.1(b)]{BlaHag2})}
If\mbox{ $\hspace{1.75pt}Y\hspace{-1.75pt}$ }is $z$-embedded in 
\mbox{$X\hspace{-1.5pt}$,}
then $\upsilon Y$ is the $G_\delta$-closure of\ \ $Y$ in 
$\upsilon X\hspace{-1.25pt}$; hence, 
\mbox{$\upsilon Y \hspace{-2.6pt}\subseteq\hspace{-2pt} 
\upsilon X\hspace{-2pt}$}.

\end{myalphlist}
\end{ftheorem}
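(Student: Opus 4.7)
For part (a), the cleanest strategy is to use the equivalence (i)$\Leftrightarrow$(iii) from Theorem~\ref{real:thm:Oz}: it suffices to show that every regular-closed subset $F$ of $S$ is a zero-set in $S$. Write $F=\operatorname{cl}_S W$ for some $W$ open in $S$, and pick $V$ open in $X$ with $W=S\cap V$. One always has the identity $\operatorname{cl}_S W=S\cap \operatorname{cl}_X W$, so it suffices to show that $\operatorname{cl}_X W$ is regular-closed in $X$; then by the $Oz$-hypothesis on $X$ it is a zero-set, and its intersection with $S$ is a zero-set in $S$. The three hypotheses on $S$ each reduce $\operatorname{cl}_X W$ to the closure of an open subset of $X$: if $S$ is open then $W$ is already open in $X$; if $S$ is dense in $X$ then $\operatorname{cl}_X W=\operatorname{cl}_X V$; and if $S$ is regular-closed, a short calculation gives $\operatorname{cl}_X W=\operatorname{cl}_X(V\cap \operatorname{int}_X S)$.

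For part (b), let $\iota\colon Y\hookrightarrow X$ denote the inclusion and $\tilde\iota\colon\beta Y\to\beta X$ its Stone-\v{C}ech extension. The first task is to show $\tilde\iota(\upsilon Y)\subseteq\upsilon X$: given a continuous $f\colon X\to\mathbb{R}$, the restriction $f\circ\iota$ extends continuously to $\upsilon Y$ by the very definition of $\upsilon Y$, and precomposing that extension with $\tilde\iota^{-1}$ on $\tilde\iota(\upsilon Y)$ produces the continuous extension of $f$ required for membership in $\upsilon X$. The map $\tilde\iota$ restricted to $\upsilon Y$ is injective because the $z$-embedding hypothesis ensures that every zero-set in $Y$ has the form $Y\cap Z$ for some zero-set $Z$ in $X$, and such zero-sets already separate distinct points of $\upsilon Y$ via their continuous extensions to $\upsilon X$.

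It remains to identify $\tilde\iota(\upsilon Y)$ with the $G_\delta$-closure of $Y$ in $\upsilon X$. For the inclusion $\upsilon Y\subseteq\operatorname{cl}^{G_\delta}_{\upsilon X}Y$, use the standard fact that $Y$ is $G_\delta$-dense in $\upsilon Y$: any $G_\delta$-set in $\upsilon X$ meeting a point of $\upsilon Y$ pulls back via $\tilde\iota$ to a $G_\delta$-set in $\upsilon Y$ meeting $Y$. For the reverse inclusion, given $p\in \upsilon X\setminus\upsilon Y$, one must exhibit a $G_\delta$-subset of $\upsilon X$ containing $p$ and disjoint from $Y$. Because $p\notin\upsilon Y$, there is a zero-set $Z$ in $\beta Y$ containing the corresponding point and disjoint from $Y$; using $z$-embedding, transfer $Z\cap Y=\varnothing$ to an analogous zero-set in $X$ (and hence in $\upsilon X$) that still contains $p$ and misses $Y$.

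The main obstacle is this final transfer. The $z$-embedding hypothesis allows zero-sets to be moved from $Y$ into $X$, but we need to move a zero-set of $\beta Y$ separating $p$ from $Y$ into $\upsilon X$ in a way that preserves the separation at $p$. This requires combining the $z$-embedding with the $G_\delta$-density characterization of $\upsilon Y$, and is where the cited result of Blair and Hager is doing its substantive work; in a self-contained write-up one would need to reformulate the zero-set separation at $\beta Y$ as a countable family of zero-sets in $Y$, extend each one to $X$ by $z$-embedding, and check that the extended countable intersection in $\upsilon X$ still separates $p$ from $Y$.
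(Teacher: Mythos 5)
First, a point of reference: the paper does not prove this theorem at all --- it is recorded as a fact imported from Blair~[5.3] and Blair--Hager~[1.1(b)] --- so your argument has to be judged on its own merits rather than against an internal proof.

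Your part (a) is correct and complete. The reduction via Theorem~\ref{real:thm:Oz} to showing that $\operatorname{cl}_X W$ is regular-closed in $X$, the identity $\operatorname{cl}_S W = S\cap\operatorname{cl}_X W$, and the three case computations (with $\operatorname{cl}_X W=\operatorname{cl}_X V$ in the dense case and $\operatorname{cl}_X W=\operatorname{cl}_X(V\cap\operatorname{int}_X S)$ in the regular-closed case) all check out, and the trace on $S$ of a zero-set of $X$ is a zero-set of $S$.

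Part (b), however, has a genuine gap --- two, in fact. First, the injectivity of $\tilde\iota$ on $\upsilon Y$ is not established by what you wrote: distinct points of $\upsilon Y$ lie in the closures of disjoint zero-sets $Z_1,Z_2$ of $Y$, and $z$-embedding supplies zero-sets $W_1,W_2$ of $X$ with $W_i\cap Y=Z_i$, but $W_1\cap W_2$ need only miss $Y$, not be empty, so $\operatorname{cl}_{\beta X}W_1$ and $\operatorname{cl}_{\beta X}W_2$ can share points; keeping the two images apart requires $Z_1$ and $Z_2$ to be completely separated in $X$, a $C^*$-embedding-type conclusion that $z$-embedding alone does not deliver. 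Second, the step you yourself flag as ``the main obstacle'' is the theorem: for $p\in\upsilon X$ outside the image of $\upsilon Y$ there is no single ``corresponding point'' of $\beta Y$ (the fiber $\tilde\iota^{-1}(p)$ may be infinite), and assembling one $G_\delta$ of $\upsilon X$ through $p$ and missing $Y$ out of the separating zero-sets attached to the various fiber points is precisely the substantive content of Blair--Hager 1.1(b). As written, part (b) is a plan, not a proof. A cleaner self-contained route avoids $\beta Y\to\beta X$ entirely: let $T$ be the $G_\delta$-closure of $Y$ in $\upsilon X$. Then $T$ is realcompact by Theorem~\ref{real:thm:bas}(a); $Y$ is $G_\delta$-dense in $T$; and $Y$ is $z$-embedded in $T$, being $z$-embedded in $X$, which in turn is $C$-embedded (hence $z$-embedded) in $\upsilon X$. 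Since a $z$-embedded subspace that meets every nonempty zero-set of the ambient space is $C$-embedded, $Y$ is dense and $C$-embedded in the realcompact space $T$, and the uniqueness of the Hewitt realcompactification gives $T=\upsilon Y$.
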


A key component of our treatment of locally compact groups 
(and their subgroups) is the following consequence of a result 
of Ross and Stromberg:

\begin{ftheorem}[\pcite{1.3, 1.6}{RossStro}, \pcite{1.10}{ComfTrig}] 
\label{real:thm:RS}
Every locally compact group is an $Oz$-space.
\end{ftheorem}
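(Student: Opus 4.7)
The plan is to verify condition~(iii) of Theorem~\ref{real:thm:Oz}: that every regular-closed subset $F = \operatorname{cl}_G U$ of $G$ (with $U$ open) is a zero-set. My approach is a coset-decomposition reduction to the $\sigma$-compact case, where the Ross--Stromberg structural results can be brought to bear.

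First I would reduce to the $\sigma$-compact case. Pick a compact symmetric $V \in \mathcal{N}(G)$; then $H := \bigcup_{n \geq 1} V^n$ is an open, $\sigma$-compact, hence clopen subgroup of $G$, and $G = \bigsqcup_\alpha x_\alpha H$ decomposes into its clopen left cosets. Each $F_\alpha := F \cap x_\alpha H$ is regular-closed in $x_\alpha H$ with relative interior $U \cap x_\alpha H$. Assuming each $F_\alpha = g_\alpha^{-1}(0)$ for some continuous $g_\alpha \colon x_\alpha H \to [0,1]$, I would extend $g_\alpha$ by zero outside the clopen set $x_\alpha H$, producing a continuous $\tilde g_\alpha$ on $G$, and form $f := \sum_\alpha \tilde g_\alpha$. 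Because each point of $G$ lies in exactly one (open) coset $x_\beta H$, in a neighborhood of that point only the single summand $\tilde g_\beta$ is non-zero, so $f \colon G \to [0,1]$ is continuous and $f^{-1}(0) = F$. This reduces the problem to the case where $G = H$ is $\sigma$-compact.

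In the $\sigma$-compact case, I would apply the Kakutani--Kodaira theorem to produce a compact normal $K \trianglelefteq H$ with $H/K$ metrizable (and, being $\sigma$-compact, separable metric). Let $q \colon H \to H/K$ be the resulting continuous, open, and---because $K$ is compact---closed quotient. Pulling back a countable neighborhood base of $q(e)$ in $H/K$ yields a countable family $\{U_n\}$ of open neighborhoods of $K$ in $H$. The technical core of the argument---which is the content of Ross and Stromberg's Theorems~1.3 and~1.6---uses this countable family together with the regular-closed structure of $F$ to exhibit $F$ as a closed $G_\delta$ in $H$. Since $H$ is paracompact and hence normal, every closed $G_\delta$ is a zero-set, finishing the $\sigma$-compact case.

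The main obstacle is precisely this lifting step. The naive attempt, $F = \bigcap_n U \cdot U_n$, actually computes the $K$-saturation $F \cdot K$ rather than $F$ itself, because the $U_n$ are neighborhoods of $K$ rather than of $e$; and the identity in $H$ need not admit a countable neighborhood base when $K$ is non-metrizable. The Ross--Stromberg method sidesteps this by exploiting the compactness of $K$ together with the structure $F = \operatorname{cl}_H(\operatorname{int}_H F)$, recovering $F$ (rather than the strictly larger $F \cdot K$) as a countable intersection of open sets in $H$.
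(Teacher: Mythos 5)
This statement is one of the paper's quoted facts: the authors give no proof at all, only the citations to Ross--Stromberg and to Comfort--Trigos, so there is no in-paper argument to compare yours against. Judged on its own terms, your proposal has a correct outer shell but a genuine gap at its core. The reduction to the $\sigma$-compact case is fine: the clopen-coset decomposition, the observation that $F\cap x_\alpha H=\operatorname{cl}_{x_\alpha H}(U\cap x_\alpha H)$ is regular-closed in $x_\alpha H$, and the gluing $f=\sum_\alpha\tilde g_\alpha$ (locally a single summand, hence continuous, with $f^{-1}(0)=F$) all work, as does the final remark that in the $\sigma$-compact (hence Lindel\"of, hence normal) case a closed $G_\delta$ is a zero-set.

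The gap is that the entire mathematical content of the theorem lives in the step you explicitly defer: showing that a regular-closed set in a $\sigma$-compact, locally compact, \emph{non-metrizable} group is a $G_\delta$. You correctly diagnose why the obvious candidate $\bigcap_n U U_n$ fails---it produces the $K$-saturation $FK$ rather than $F$, and $e$ has no countable base when $K$ is non-metrizable---but you then assert that ``the Ross--Stromberg method sidesteps this'' without supplying the idea that does the sidestepping. Nothing in your sketch (Kakutani--Kodaira plus the pulled-back countable family $\{U_n\}$) by itself separates $F$ from $FK$; some genuinely new input is required. In Ross and Stromberg's treatment that input is measure-theoretic: they work with Baire sets and Haar measure (e.g., averaging the characteristic function of $U$ over the compact normal subgroup and exploiting that a non-empty relatively open subset of a coset has positive Haar measure), and it is this analytic step, not the topological bookkeeping, that recovers $F$ itself as a closed Baire set and hence a zero-set. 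An alternative route, which the paper's own Remark following Lemma~\ref{real:lemma:zups} points to, goes through \v{S}\v{c}epin's $\kappa$-metrizability of compact groups. As written, your proposal is an accurate road map to the citation rather than a proof.
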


Since every locally precompact group is a dense subgroup of a locally 
compact group, Theorems~\ref{real:thm:RS} 
and~\ref{real:thm:BlaHag}(a) yield:

\begin{fcorollary}[\pcite{1.10}{ComfTrig}] \label{real:cor:LPOz}
Every locally precompact group is an $Oz$-space. 
\end{fcorollary}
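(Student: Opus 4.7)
The proof is essentially a two-line assembly of results already in hand, so my plan is simply to name the right ingredients in the right order.

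Let $G$ be a locally precompact group. By Weil's theorem (cited in the introduction), $G$ embeds as a dense subgroup of its Weil completion $\widetilde{G}$, which is locally compact. I would first invoke Theorem~\ref{real:thm:RS} to conclude that the ambient locally compact group $\widetilde{G}$ is an $Oz$-space.

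Next, since $G$ is dense in $\widetilde{G}$, I would apply Theorem~\ref{real:thm:BlaHag}(a) with $X = \widetilde{G}$ and $S = G$ to conclude that $G$ itself is an $Oz$-space. This completes the argument.

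There is no real obstacle here; the work has been done in the preceding theorems, and the only thing to check is that the hypotheses of Theorem~\ref{real:thm:BlaHag}(a) are met---which they are, by the density clause of Weil's theorem. The corollary is best viewed as a convenient restatement of the combination of Ross--Stromberg's $Oz$ property for locally compact groups with Blair's hereditariness of $Oz$ under dense subspaces.
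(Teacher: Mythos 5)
Your argument is exactly the one the paper gives: it derives the corollary from Theorem~\ref{real:thm:RS} (locally compact groups are $Oz$-spaces) together with Theorem~\ref{real:thm:BlaHag}(a) applied to the dense embedding $G \subseteq \widetilde{G}$ furnished by Weil's theorem. The proposal is correct and matches the paper's proof.
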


\begin{lemma} \label{real:lemma:zups}
Let $G$ be a locally precompact group, and
\mbox{$U \hspace{-2pt}\subseteq\hspace{-2pt} G$} an open subset.
Put \mbox{$F\hspace{-2.5pt} := \hspace{-1.25pt}
\operatorname{cl}_G U\hspace{-2pt}$} and
\mbox{$K\hspace{-2.5pt}:= \hspace{-1.25pt}
\operatorname{cl}_{\widetilde G} U\hspace{-2pt}$.}
Then $F$ is $z$-embedded in $K\hspace{-1pt}$, and $\upsilon F$ is the 
$G_\delta$-closure of $F$ in $\upsilon K\hspace{-1pt}$.
\end{lemma}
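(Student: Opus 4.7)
The plan is to apply the $Oz$-space machinery to $K$, which is a regular-closed subset of the locally compact group $\widetilde G$. The two conclusions of the lemma will then drop out of Theorems~\ref{real:thm:Oz} and~\ref{real:thm:BlaHag}(b).

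First I would set up the basic inclusions. Since $U$ is open in $G$ and $G$ is dense in $\widetilde G$, there exists an open set $V \subseteq \widetilde G$ with $U = V \cap G$, and density of $G$ forces $\operatorname{cl}_{\widetilde G} V = \operatorname{cl}_{\widetilde G}(V \cap G) = K$. Thus $K$ is \emph{regular-closed} in $\widetilde G$. Also $F = \operatorname{cl}_G U \subseteq \operatorname{cl}_{\widetilde G} U = K$, and $F$ is dense in $K$ (because $U \subseteq F$ forces $\operatorname{cl}_K F \supseteq \operatorname{cl}_K U = K \cap \operatorname{cl}_{\widetilde G} U = K$).

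Next I would invoke the $Oz$-property. The completion $\widetilde G$ is locally compact, hence locally precompact, so by Corollary~\ref{real:cor:LPOz} it is an $Oz$-space. Since $K$ is regular-closed in $\widetilde G$, Theorem~\ref{real:thm:BlaHag}(a) transfers the $Oz$-property to $K$. Then, because $F$ is dense in $K$, the implication (i)$\Rightarrow$(ii) of Theorem~\ref{real:thm:Oz} gives that $F$ is $z$-embedded in $K$, which is the first assertion.

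The second assertion is now immediate: Theorem~\ref{real:thm:BlaHag}(b), applied to the $z$-embedding $F \hookrightarrow K$, states exactly that $\upsilon F$ is the $G_\delta$-closure of $F$ in $\upsilon K$. There is no serious obstacle here; the only point requiring even a line of care is verifying that $K$ is regular-closed in $\widetilde G$ (rather than merely closed), since the chain of results to be quoted is entirely dependent on that fact to make $K$ inherit the $Oz$-property from $\widetilde G$.
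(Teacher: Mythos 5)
Your proposal is correct and follows the paper's own proof essentially step for step: identify $K$ as the regular-closed set $\operatorname{cl}_{\widetilde G}V$ in the $Oz$-space $\widetilde G$, transfer the $Oz$-property to $K$ via Theorem~\ref{real:thm:BlaHag}(a), conclude $z$-embedding of the dense subset $F$ from Theorem~\ref{real:thm:Oz}, and finish with Theorem~\ref{real:thm:BlaHag}(b). The only (harmless) difference is that you cite Corollary~\ref{real:cor:LPOz} where the paper invokes Theorem~\ref{real:thm:RS} directly, and you spell out the density of $F$ in $K$, which the paper leaves implicit.
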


\begin{proof}
Since $U$ is open in $G$,
there is an open subset \mbox{$V \hspace{-2.25pt}\subseteq \hspace{-2pt} 
\widetilde{G}$} such that 
\mbox{$U\hspace{-2.75pt}= 
\hspace{-1.75pt} V\hspace{-2.75pt} \cap \hspace{-1.9pt} G$\hspace{-1pt}}. 
Thus,~one~has 
\mbox{$K  \hspace{-2.25pt}= \hspace{-1pt} \operatorname{cl}_{\widetilde G} U 
\hspace{-2pt} =  \hspace{-1pt}
\operatorname{cl}_{\widetilde G} (V \hspace{-2.75pt}\cap \hspace{-1.9pt} G) 
\hspace{-2pt} =  \hspace{-1pt}
\operatorname{cl}_{\widetilde G} V\hspace{-2pt}$}, 
because $G$ is dense in $\widetilde G\hspace{-1pt}$. Therefore,
$K$ is a regular-closed subset of the $Oz$-space
$\widetilde G\hspace{-1pt}$ (cf. Theorem~\ref{real:thm:RS}), and
by Theorem~\ref{real:thm:BlaHag}(a), $K$ is an $Oz$-space. 
So, by 
Theorem~\ref{real:thm:Oz}, every dense subset of $K$ is $z$-embedded in 
$K\hspace{-1.5pt}$. In 
particular, $F$ is $z$-embedded in $K\hspace{-1.5pt}$. Hence, by 
Theorem~\ref{real:thm:BlaHag}(b),
$\upsilon F$ is the $G_\delta$-closure of $F$ in 
$\upsilon K\hspace{-1pt}$.
\end{proof}

\begin{remark} 
In developing our proof of Lemma~\ref{real:lemma:zups}, we have followed 
the authors of \cite[2.3]{ComfTrig} in relying on the results cited from  
\cite{RossStro}, \cite{BlaHag2}, and \cite{Blair}. We  note that 
alternative sources for equivalent statements are available in 
the literature: The fact that every locally compact group is (in our 
terminology) an $Oz$-space follows immediately from \v{S}\v{c}epin's 
results (cf.~\cite{Scepin1}~and \cite{Scepin2}); 
Tkachenko has shown that every $G_\delta$-dense subspace of an $Oz$-space 
is $C$-embedded (cf.~\cite[Theorem~2]{TkaCemb}).
\end{remark}

A topological space $X$ is said to be {\em locally realcompact} 
(respectively, {\em locally Dieudonn\'e complete}) if for every 
\mbox{$x\hspace{-2pt}\in\hspace{-2pt} X\hspace{-1.5pt}$},  
there is a neighborhood $U$ of 
$x$  such that  \mbox{$\operatorname{cl}_X \hspace{-1pt} U$} is 
realcompact (respectively, Dieudonn\'e complete). Since our 
spaces are Tychonoff, and the properties in question are inherited by
closed subspaces, it is clear that a space $X$  is locally 
realcompact (respectively, locally Dieudonn\'e complete)
if and only if for each \mbox{$x\hspace{-2pt}\in\hspace{-2pt} X$}
and neighborhood $U$ of $x$ there is a neighborhood $V$ of $x$
such that \mbox{$\operatorname{cl}_X \hspace{-1.5pt}V$} is realcompact 
(respectively, Dieudonn\'e complete) and
\mbox{$\operatorname{cl}_X \hspace{-1.5pt} V \hspace{-2pt}
\subseteq\hspace{-2pt} U$\hspace{-2pt}}. Echoing the relationship 
between a locally compact space and its Stone-\v{C}ech compactification, 
a~space $X$ is locally realcompact (respectively, locally Dieudonn\'e 
complete) if and 
only if $X$ is open in its Hewitt realcompactification $\upsilon X$ 
(respectively, in its Dieudonn\'e completion $\gamma X$) 
(cf.~\cite[2.11]{MaRaWo}).

In order to characterize global and local  realcompactness and Dieudonn\'e 
completeness in the class of locally precom\-pact groups, one introduces 
a cardinal invariant. 

\begin{definition}
Let $\tau$ be an infinite cardinal, and $G$ a topological group.

\begin{myalphlist}

\item
A subset $X$ of $G$ is said to be {\em $\tau$\mbox{-}precompact}
if for every 
\mbox{$U\hspace{-2.5pt} \in \hspace{-1.5pt}\mathcal{N}(G)$,} there 
is \mbox{$S  \hspace{-2pt} \subseteq  \hspace{-2pt} X$} that satisfies
$|S|  \hspace{-2pt} \leq\nolinebreak  \hspace{-2pt}\tau$~and
 \mbox{$X \hspace{-2.5pt}\subseteq \hspace{-2pt} (SU) 
\hspace{-2.5pt}\cap \hspace{-2.5pt} (U \hspace{-1pt}S)\hspace{-1pt}$}
(cf. ``$\tau$-bounded" in \cite{Guran}). 

\item
The {\em precompactness index} \ 
$ip(X)$ of a subset $X$ of $G$ is the least infinite cardinal $\tau$ such 
that $X$ is 
$\tau$\mbox{-}precompact (cf. ``index of boundedness" in \cite{TkaIntro}). 
\end{myalphlist}
\end{definition}

\pagebreak[3]

\begin{remark}
We note that the precompactness index is not a topological invariant of a 
space~$X\hspace{-2pt}$, but rather of the way a space $X$ is placed in 
$G\hspace{-1pt}$. Indeed, homeomorphic subspaces of a given group $G$ may 
have different 
precompactness indices, as the following example shows:
Let \mbox{$\lambda \hspace{-2pt} >\hspace{-2pt} \omega$} be a~cardinal,
$E$ a discrete group of cardinality $\lambda$, put
\mbox{$G \hspace{-2pt} := \hspace{-2pt}
(\mathbb{Z}/2\mathbb{Z})^\lambda \hspace{-2.75pt} \times \hspace{-2pt} E$}, 
and let $D$ be a discrete subset of cardinality $\lambda$ of 
$(\mathbb{Z}/2\mathbb{Z})^\lambda\hspace{-2pt}$. 
(For instance, one can take $D$ to be 
the set of elements with precisely one non-zero coordinate.) Then 
\mbox{$D\hspace{-2pt} \times \hspace{-2.5pt} \{e\}$} and 
\mbox{$\{e\} \hspace{-2.5pt} \times \hspace{-2.25pt} E$} 
are \mbox{homeomorphic},~but $D$ is precompact, 
and so \mbox{$ip(D\hspace{-2pt} \times  \hspace{-2.5pt} \{e\})
\hspace{-2pt}=\hspace{-1pt}\omega\hspace{-0.5pt}$}, 
while \mbox{$ip(\{e\} \hspace{-2.5pt} \times \hspace{-2.25pt} E)
\hspace{-2pt}=\hspace{-1pt}\lambda$}.
Nevertheless, 
if $H$ is a subgroup of $G$ that contains $X\hspace{-2pt}$, then $X$ has 
the same precompactness index in $H$ and in $G$ (cf.~\cite[2.24(d)]{GLCLTG}). 
\end{remark}

In what follows, we need the following elementary properties of the
precompactness index. (Theorem~\ref{real:thm:LCLindelof} below has an 
obvious analogue for  cardinals 
\mbox{$\lambda\hspace{-2pt}\geq\hspace{-2pt}\omega$}, but we require only 
the case \mbox{$\lambda\hspace{-2pt}=\hspace{-2pt}\omega$}.)

\begin{ftheorem}[\pcite{1.29}{GLCLTG}] \label{real:thm:LCLindelof}
For every locally compact group $L$, the following statements are 
equivalent:

\begin{myromanlist}

\item
$L$ is $\omega$-precompact;

\item
$L$ is $\sigma$-compact;

\item
$L$ is Lindel\"of.

\end{myromanlist}
\end{ftheorem}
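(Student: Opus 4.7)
The plan is to prove the cyclic chain of implications (i)${}\Rightarrow{}$(ii)${}\Rightarrow{}$(iii)${}\Rightarrow{}$(i). Two of the three links are essentially topological generalities, and only one uses local compactness in an essential way, so there is no real obstacle; nevertheless, I want to be careful to distinguish which step actually needs the hypothesis on $L$.

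For (ii)${}\Rightarrow{}$(iii), I would simply invoke the standard fact that every $\sigma$\mbox{-}compact space is Lindel\"of: if $L=\bigcup_{n<\omega}K_n$ with each $K_n$ compact, then any open cover of $L$ restricts on each $K_n$ to an open cover admitting a finite subcover, and the union of these countably many finite families is a countable subcover of $L$. This direction uses nothing about the group structure.

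For (iii)${}\Rightarrow{}$(i), I would argue in any topological group, not just a locally compact one. Given $U \in \mathcal{N}(L)$, the families $\{xU\}_{x\in L}$ and $\{Ux\}_{x\in L}$ are open covers of $L$, so Lindel\"ofness yields countable subfamilies $\{x_nU : n < \omega\}$ and $\{Uy_n : n < \omega\}$ whose unions cover $L$. Setting $S := \{x_n : n<\omega\}\cup\{y_n : n<\omega\}$ produces a countable subset of $L$ with $L \subseteq (SU)\cap(US)$, as required.

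The only step where local compactness enters is (i)${}\Rightarrow{}$(ii), and it does so cleanly. Choose a neighborhood $V \in \mathcal{N}(L)$ whose closure $K := \operatorname{cl}_L V$ is compact, using local compactness. By $\omega$\mbox{-}precompactness applied to $V$, there is a countable $S \subseteq L$ with $L \subseteq SV \subseteq SK = \bigcup_{s\in S} sK$; since each $sK$ is compact and $S$ is countable, $L$ is $\sigma$\mbox{-}compact. If any step can be called the ``main'' one, it is this: local compactness is used precisely to upgrade a countable covering by translates of $V$ into a countable covering by \emph{compact} sets.
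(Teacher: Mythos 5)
Your proof is correct: the implication (i)$\Rightarrow$(ii) uses local compactness exactly where it is needed (to replace the precompact neighborhood $V$ by its compact closure $K$ and thereby convert a countable cover by translates $sV$ into a countable cover by compact sets $sK$), and the other two implications are the standard general-topology arguments, with (iii)$\Rightarrow$(i) correctly handling both the left and right translates demanded by the definition of $\omega$-precompactness. The paper states this result only as a cited fact from the reference and gives no proof of its own, so there is nothing to compare against; your argument is the standard one and is complete.
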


\begin{ftheorem} \label{real:thm:ip}
Let $G$ be a topological group, and $X$ a subset of $G$. 

\begin{myalphlist}

\item
{\rm (\cite[2.24(a)]{GLCLTG})}
If \mbox{$Y \hspace{-2pt}\subseteq \hspace{-2pt} X$\hspace{-1.5pt}},
then \mbox{$ip(Y)\hspace{-2pt} \leq  \hspace{-1.5pt} ip(X)$.}

\item
{\rm (\cite[3.2]{DikTkaWCFTP}, \cite[2.24(c)]{GLCLTG})}
\mbox{$ip(\operatorname{cl}_G \hspace{-1pt} X) 
\hspace{-2pt} = \hspace{-1.5pt} ip(X)$.}

\item
{\rm (\cite[2.30]{GLCLTG})}
\mbox{$ip(\langle X\rangle) \hspace{-2pt} = \hspace{-1.5pt} ip(X)$.}

\end{myalphlist}
\end{ftheorem}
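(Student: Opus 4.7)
The plan is to dispatch the three parts in order: (a) by an approximation argument, (b) from (a) plus a density argument, and (c) via a reduction to a product lemma whose proof is the main difficulty.

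For (a), I fix $U \in \mathcal{N}(G)$ and choose a symmetric $V \in \mathcal{N}(G)$ with $V^2 \subseteq U$. By hypothesis there is $S \subseteq X$ of cardinality at most $ip(X)$ with $X \subseteq SV \cap VS$. For each $s \in S$ with $sV \cap Y \ne \emptyset$, pick $y_s \in sV \cap Y$; let $T$ denote the resulting set of $y_s$. Then $T \subseteq Y$ and $|T| \le ip(X)$. For $y \in Y \subseteq SV$, writing $y \in sV$ shows that this $s$ satisfies $sV \cap Y \ni y$, so $y_s$ was chosen; then $y, y_s \in sV$, whence $y \in y_s V^{-1}V = y_s V^2 \subseteq y_s U$. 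Running the symmetric argument on the right gives $Y \subseteq TU \cap UT$, so $ip(Y) \le ip(X)$.

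For (b), part (a) applied to $X \subseteq \operatorname{cl}_G X$ gives $ip(X) \le ip(\operatorname{cl}_G X)$. For the reverse, take $U$, $V$, and $S$ as above. Any $y \in \operatorname{cl}_G X$ satisfies $yV \cap X \ne \emptyset$, so $yv_1 = sv_2$ for some $s \in S$ and $v_1, v_2 \in V$, giving $y = sv_2 v_1^{-1} \in SV^2 \subseteq SU$. The symmetric computation, starting from $Vy \cap X \ne \emptyset$ and using $X \subseteq VS$, yields $\operatorname{cl}_G X \subseteq US$.

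For (c), the inequality $ip(X) \le ip(\langle X \rangle)$ is immediate from (a). For the reverse, set $Y := X \cup X^{-1} \cup \{e\}$; since $x \mapsto x^{-1}$ is a homeomorphism of $G$ and adjoining a single point preserves $ip$, one has $ip(Y) = ip(X)$. Writing $\langle X \rangle = \bigcup_{n \ge 1} Y^n$ and using that a countable union of $\tau$-precompact subsets of $G$ is $\tau$-precompact whenever $\tau \ge \omega$, it suffices to show by induction on $n$ that $ip(Y^n) \le ip(Y)$. The inductive step reduces to a \emph{product lemma}: if $A, B \subseteq G$ have $ip(A), ip(B) \le \tau$, then $ip(AB) \le \tau$.

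This product lemma is the main obstacle in the non-abelian setting, because the naive estimate $AB \subseteq SVTV$ (with $A \subseteq SV$, $B \subseteq TV$) founders on the need to ``commute'' $V$ past $T$. My plan is to cover $A$ by $\le \tau$ left translates $A \subseteq US_1$ with $S_1 \subseteq A$, and then, for each $s \in S_1$, exploit the $\tau$-precompactness of the translate $sB$ to cover $sB$ by $\le \tau$ right $U$-translates rooted at points of $sB$; assembling these covers and choosing the initial $V$ to satisfy $V^n \subseteq U$ for an appropriate $n$ produces $T' \subseteq AB$ with $|T'| \le \tau \cdot \tau = \tau$ and $AB \subseteq T' U \cap U T'$. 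The detailed bookkeeping is what is executed in \cite[2.30]{GLCLTG}.
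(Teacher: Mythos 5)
Your argument is correct, but there is nothing in the paper to compare it against: Theorem~\ref{real:thm:ip} is stated as an imported fact, with the proofs delegated entirely to the cited sources, so you have supplied a proof where the paper supplies only citations. Your treatment handles the two points that make the statement nontrivial. First, the definition requires the witnessing set $S$ to lie \emph{inside} the set being covered, so (a) is not a formal monotonicity statement; your device of passing to a symmetric $V$ with $V^2\subseteq U$ and re-rooting each ball at a point of $Y$ is exactly what is needed, and the same trick correctly yields the nontrivial inequality $ip(\operatorname{cl}_G X)\leq ip(X)$ in (b) (note only that the left and right covers produce two pick-sets, which must be unioned --- harmless since $ip$ is infinite). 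Second, you correctly isolate the crux of (c) as the product lemma in the non-abelian setting, and your plan is sound: cover one factor by $\leq\tau$ translates, use the translation invariance of $ip$ (which itself needs a conjugation adjustment of the neighborhood, since $gU\neq Ug$ in general) to cover each translate $sB$ by $\leq\tau$ further translates rooted in $sB\subseteq AB$, and assemble; the left cover $AB\subseteq T'U$ is obtained symmetrically by first covering $B$ and then the sets $At$, and the two resulting sets of size $\leq\tau\cdot\tau=\tau$ are unioned. The reduction $\langle X\rangle=\bigcup_n Y^n$ with $Y=X\cup X^{-1}\cup\{e\}$, together with closure of $\tau$-precompactness under countable unions, then finishes (c). The only incomplete spot is that you defer the bookkeeping of the product lemma to the reference rather than writing it out, but the sketch as given does complete without obstruction.
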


In \cite{ComfHernTrig}, the authors used the {\em compact covering number} 
$\kappa(X)$ (i.e., the smallest number of compact subsets of $X$
that cover $X$) to characterize realcompactness in the context of locally 
compact groups.  It is easily seen that 
\mbox{$ip(L)\hspace{-2pt} = \hspace{-2pt} \omega \cdot \kappa(L)$} 
for every infinite locally compact group~$L$.
Theorem~\ref{real:thm:ip}(b) indicates that for locally precompact 
groups, the precompactness index is the correct cardinal invariant to 
consider.

\begin{theorem} \label{real:thm:c-ip}
Let $G$ be a locally precompact group, and 
\mbox{$U \hspace{-2.5pt} \subseteq\hspace{-2pt} G$} an open subset. 
Then:

\begin{myalphlist}

\item
\mbox{$c(U) \hspace{-2pt} \leq \hspace{-1.5pt} ip(U)$};

\item
if \mbox{$\operatorname{cl}_G \hspace{-1pt} U$}
is Dieudonn\'e complete and 
\mbox{$ip(U) \hspace{-2pt} < \hspace{-2pt} \mathfrak{m}$}, then
\mbox{$\operatorname{cl}_G \hspace{-1pt} U$} is realcompact.

\end{myalphlist}
\end{theorem}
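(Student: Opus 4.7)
The plan is to prove (a) by lifting cellular families from $U$ to the Weil completion $\widetilde G$, where a Haar-measure argument bounds the cellularity of precompact open sets, and then to deduce (b) from (a) together with Theorem~\ref{real:thm:cell}. Throughout, I write $U = \widetilde U \cap G$ for some open $\widetilde U \subseteq \widetilde G$. Density of $G$ in $\widetilde G$ makes $U$ dense in $\widetilde U$, so the assignment $\widetilde V \mapsto \widetilde V \cap G$ from relatively open subsets of $\widetilde U$ to open subsets of $U$ preserves non-emptiness and disjointness; in particular, $c(U) = c(\widetilde U)$, and the analogous reasoning gives $c(\operatorname{cl}_G U) = c(U)$, since $U$ is open and dense in $\operatorname{cl}_G U$ and every non-empty open subset of $\operatorname{cl}_G U$ meets $U$ in a non-empty open set.

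For (a), set $\tau = ip(U)$ and, using local precompactness of $G$, pick $W \in \mathcal{N}(G)$ with $\operatorname{cl}_{\widetilde G} W$ compact (by Theorem~\ref{prel:thm:charpr}); then $W = \widetilde W_0 \cap G$ for some open $\widetilde W_0 \in \mathcal{N}(\widetilde G)$ with the same $\widetilde G$-closure as $W$. Choose $S \subseteq U$ with $|S| \leq \tau$ and $U \subseteq SW$. Given a cellular family $\{\widetilde V_\alpha\}_{\alpha < \kappa}$ in $\widetilde U$, pick $y_\alpha \in \widetilde V_\alpha \cap G$ by density, write $y_\alpha = s_\alpha w_\alpha \in s_\alpha \widetilde W_0$, and partition the indices by $I_s = \{\alpha : s_\alpha = s\}$. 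For each fixed $s \in S$, the family $\{\widetilde V_\alpha \cap s\widetilde W_0\}_{\alpha \in I_s}$ consists of pairwise disjoint non-empty open subsets of the precompact open set $s\widetilde W_0 \subseteq \widetilde G$; since the Haar measure of $\widetilde G$ is finite on $s\widetilde W_0$ and strictly positive on every non-empty open set, this forces $|I_s| \leq \aleph_0$. Summing over $s$, $\kappa \leq |S| \cdot \aleph_0 \leq \tau$, so $c(U) = c(\widetilde U) \leq \tau = ip(U)$, proving (a).

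For (b), combining (a) with the identity $c(\operatorname{cl}_G U) = c(U)$ from the opening paragraph yields $c(\operatorname{cl}_G U) \leq ip(U) < \mathfrak{m}$, and Theorem~\ref{real:thm:cell} (Shirota), applied to the Dieudonn\'e-complete space $\operatorname{cl}_G U$, delivers realcompactness. The main obstacle lies in (a): the crucial point is that a precompact open subset of a locally compact group has countable cellularity, which comes from Haar measure being strictly positive on non-empty opens and finite on the set in question; once this is isolated, the rest is routine bookkeeping---identifying $U$ with a dense open subset of $\widetilde U$ and pigeonholing over the $\tau$-sized covering $S$.
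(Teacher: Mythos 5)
Your proof is correct, but it follows a genuinely different route from the paper's. The paper first reduces to the case \mbox{$G=\langle U\rangle$} (using that $ip(U)$ does not depend on the ambient group), invokes Theorem~\ref{real:thm:ip} to get \mbox{$ip(\widetilde G)=ip(G)=ip(U)$}, and then quotes Tkachenko's bound \mbox{$c(L)\leq \omega\cdot\kappa(L)$} for locally compact $L$ \pcite{4.8}{TkaIntro} to conclude \mbox{$c(U)\leq c(\widetilde G)\leq ip(\widetilde G)=ip(U)$}. You instead bypass both the reduction to $\langle U\rangle$ and the citation of Tkachenko's theorem by arguing directly: you lift a cellular family to the open set $\widetilde U\subseteq\widetilde G$, pigeonhole over a $\tau$-sized translate cover $SW$ of $U$, and use Haar measure (finite on relatively compact sets, positive on non-empty open sets) to see that each piece $s\widetilde W_0$ supports only countably many pairwise disjoint non-empty open sets. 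This is essentially a self-contained, localized proof of the special case of Tkachenko's theorem that the paper imports wholesale, at the cost of invoking the existence of Haar measure; the paper's version buys brevity and stays measure-free. One small point worth making explicit: for your argument you need the inequality \mbox{$c(U)\leq c(\widetilde U)$}, i.e., that a cellular family in $U$ lifts to one in $\widetilde U$; this holds because if $\widetilde V_\alpha\cap\widetilde V_\beta$ were a non-empty open set, it would meet the dense set $U$, contradicting \mbox{$V_\alpha\cap V_\beta=\emptyset$} --- your stated justification (that \mbox{$\widetilde V\mapsto\widetilde V\cap G$} preserves disjointness) literally gives only the reverse inequality, though both directions are standard. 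Part (b) is identical in both treatments.
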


\begin{proof}
(a) Since $ip(U)$ is independent of the ambient group $G$, by replacing 
the group
$G$ with the subgroup $\langle U \rangle$ generated by $U$ if necessary, 
we may assume that 
\mbox{$G\hspace{-2pt} = \hspace{-2pt} \langle U \rangle$}. Thus, 
by Theorem~\ref{real:thm:ip}, 
\mbox{$ip(\widetilde G) \hspace{-2pt} =  \hspace{-1.5pt} ip(G) 
\hspace{-2pt} = \hspace{-1.5pt} ip(U)$}. Since $G$ is locally precompact, 
its completion $\widetilde G$ is locally compact, and so
\mbox{$ip(\widetilde G)\hspace{-2pt} = \hspace{-1.25pt}
\omega \cdot \kappa(\widetilde G)$}. Therefore, by a theorem of Tkachenko,
\mbox{$c(\widetilde G) \hspace{-2pt} \leq \hspace{-1.25pt} 
\omega \cdot \kappa(\widetilde G) \hspace{-2pt} = \hspace{-1.5pt}
ip(\widetilde G)$} (cf.~\cite[4.8]{TkaIntro}). Hence, 
\begin{align}
c(U) \leq c(G) = c(\widetilde G)  \leq ip(\widetilde G) = ip (U).
\end{align}

(b) By (a),
\mbox{$c(\operatorname{cl}_G \hspace{-1pt} U) \hspace{-2pt} 
= \hspace{-1.5pt} c(U) \hspace{-2pt} \leq \hspace{-1.5pt} ip(U) <
\hspace{-2pt} \mathfrak{m}$}. Thus, the statement follows by 
Theorem~\ref{real:thm:cell}.
\end{proof}

\begin{remark}
The hypothesis in Theorem \ref{real:thm:c-ip}(a) that $G$ is
locally precompact cannot be omitted: Indeed, put
\mbox{$G \hspace{-1pt}:= \bigoplus\limits_{\omega_1} 
\mathbb{Z}/2\mathbb{Z}$}, and  equip~$G$~with the group topology whose 
base at zero consists of subgroups
\mbox{$H_\alpha:=\{x \hspace{-2pt}\in G\hspace{-2pt} \mid 
x_\beta \hspace{-2pt}= \hspace{-2pt}0 \mbox{ for all } 
\beta \hspace{-2pt} < \hspace{-2pt}\alpha\}$}, where 
\mbox{$\alpha  \hspace{-2pt} < \hspace{-2pt}\omega_1$}.
Since the quotient \mbox{$G/H_\alpha$} is countable for every 
\mbox{$\alpha \hspace{-2pt} < \hspace{-2pt}\omega_1$},
it follows that \mbox{$ip(G)\hspace{-1.5pt}=\hspace{-1pt}\omega$}, and 
thus, by Theorem~\ref{real:thm:ip}(a),
\mbox{$ip(U)\hspace{-1.5pt}=\hspace{-1pt}\omega$} for every open subset 
$U$ of $G\hspace{-1pt}$. On the other hand, if 
\pagebreak[3]
\mbox{$e^{(\gamma)}\hspace{-2pt}\in\hspace{-2pt} G$} is such that
 \mbox{$e^{(\gamma)}_\beta \hspace{-2pt} = \hspace{-2pt} 1$}
if and only if \mbox{$\gamma \hspace{-2pt} = \hspace{-2pt}\beta$},
then \mbox{$\{H_\gamma + e^{(\gamma)}\}_{\alpha \leq \gamma < \omega_1}$}
is a pairwise disjoint family of open subsets of $H_\alpha$. Therefore,
\mbox{$c(H_\alpha)\hspace{-2pt} = \hspace{-2pt}\omega_1$} for every
\mbox{$\alpha \hspace{-2pt} < \hspace{-2pt}\omega_1$}, and hence
\mbox{$ip(U)\hspace{-2pt}<\hspace{-2pt}  c(U)\hspace{-2pt} = 
\hspace{-2pt}\omega_1$} for every non-empty 
open subset of $G\hspace{-1pt}$. (The group $G$ was defined 
and considered in \cite[3.2]{ComfRoss2} for a different, but related, 
purpose.)
\end{remark}

We now turn to identifying locally realcompact groups within the class 
of locally precompact groups. Unexpectedly, these prove to be exactly the
(locally) Dieudonn\'e complete groups in the class. Therefore,
Theorem~\ref{real:thm:locr} below  provides a positive answer to a 
special case of a problem of Arhangel{$'$}ski\u{\i} and Tkachenko 
(cf.~\cite[3.2.2]{ArhTka}), who asked whether every locally Dieudonn\'e 
complete topological group is Dieudonn\'e complete.

\begin{theorem} \label{real:thm:locr}
Let $G$ be a locally precompact group. The following statements are 
equivalent:

\begin{myromanlist}

\item
$G$ is Dieudonn\'e complete;

\item
$G$ is locally Dieudonn\'e complete;

\item
$G$ is locally realcompact;

\item
$G$ is $G_\delta$-closed in $\widetilde G$;

\item
every open subgroup of $G$ is $G_\delta$-closed in $\widetilde G$;

\item
$G$ contains an open subgroup that is $G_\delta$-closed in $\widetilde G$;

\item
every $\omega$-precompact open subgroup of $G$ is realcompact;

\item
$G$ contains a realcompact open subgroup;

\item
$G$ contains a Dieudonn\'e complete open subgroup.

\end{myromanlist}
\end{theorem}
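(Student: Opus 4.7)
I would prove the nine-way equivalence via the cycle
(iv) $\Rightarrow$ (i) $\Rightarrow$ (ii) $\Rightarrow$ (iii) $\Rightarrow$ (viii) $\Rightarrow$ (iv),
together with a side loop
(iv) $\Rightarrow$ (v) $\Rightarrow$ (vi) $\Rightarrow$ (ix) $\Rightarrow$ (viii)
absorbing (v), (vi), (ix), and the short branch
(i) $\Rightarrow$ (vii) $\Rightarrow$ (viii) absorbing (vii). Two structural facts drive the proof: $\widetilde{G}$ is locally compact and hence Dieudonn\'e complete by Theorem~\ref{real:thm:bas}(c); and every locally precompact group contains an $\omega$-precompact open subgroup $H := \langle V\rangle$ for any precompact open neighborhood $V$ of $e$, since $ip(\langle V\rangle) = ip(V) = \omega$ by Theorem~\ref{real:thm:ip}(c), so that $\widetilde H$ is locally compact and $\omega$-precompact, hence $\sigma$-compact by Theorem~\ref{real:thm:LCLindelof} and therefore realcompact by Theorem~\ref{real:thm:bas}(b).

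The routine implications come directly from the results of \S\ref{sect:real}: (iv) $\Rightarrow$ (i) is Theorem~\ref{real:thm:bas}(a); (i) $\Rightarrow$ (ii) is trivial; and (ii) $\Rightarrow$ (iii) is Theorem~\ref{real:thm:c-ip}(b) applied to a precompact open $U$ with $\operatorname{cl}_G U$ Dieudonn\'e complete. For (viii) $\Rightarrow$ (iv), let $H$ be a realcompact open subgroup and put $\widetilde H := \operatorname{cl}_{\widetilde G} H$; Lemma~\ref{real:lemma:zups} with $U = H$ identifies $\upsilon H$ with the $G_\delta$-closure of $H$ in $\upsilon\widetilde H$, so $H = \upsilon H$ forces $H$ to be $G_\delta$-closed in $\widetilde H$, and the coset decompositions $G = \bigsqcup g_i H$, $\widetilde G = \bigsqcup g_i \widetilde H$ from Lemma~\ref{prel:lemma:osubgrp} express $\widetilde G \setminus G = \bigsqcup g_i(\widetilde H \setminus H)$ as a $G_\delta$-open union. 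The side loop runs along the same lines: for an open subgroup $M$ of $G$ with $\widetilde M := \operatorname{cl}_{\widetilde G} M$, the decomposition $\widetilde G \setminus M = (\widetilde G \setminus \widetilde M) \cup (\widetilde M \cap (\widetilde G \setminus G))$ is a union of two $G_\delta$-open sets, giving (iv) $\Rightarrow$ (v); (v) $\Rightarrow$ (vi) is trivial; (vi) $\Rightarrow$ (ix) restricts $G_\delta$-closedness to the Dieudonn\'e complete $\widetilde M$ and applies Theorem~\ref{real:thm:bas}(a); and (ix) $\Rightarrow$ (viii), like (i) $\Rightarrow$ (vii) and (vii) $\Rightarrow$ (viii), feeds an $\omega$-precompact open subgroup sitting inside the ambient Dieudonn\'e complete supergroup into Theorem~\ref{real:thm:c-ip}(b).

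The crux of the proof---and the main obstacle---is (iii) $\Rightarrow$ (viii). I would choose a precompact open neighborhood $V$ of $e$ with $\operatorname{cl}_G V$ realcompact, and set $H := \langle V\rangle$, an $\omega$-precompact open (hence clopen) subgroup. Applying the definition of $\omega$-precompactness to the neighborhood $V$ of identity in $H$ produces a countable $S \subseteq H$ with $H \subseteq SV$; since $H$ is closed in $G$, each $s\,\operatorname{cl}_G V = \operatorname{cl}_G(sV)$ lies in $H$, so
\begin{equation}
  H \;=\; \bigcup_{s \in S} s\,\operatorname{cl}_G V
\end{equation}
exhibits $H$ as a countable union of closed realcompact subspaces. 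Invoking the standard fact that a countable union of closed realcompact subspaces of a Tychonoff space is realcompact (cf.~\cite[Problem~3.11.F(b)]{Engel6}) yields (viii). This is the obstacle because, without collapsing to the $\omega$-precompact subgroup $\langle V\rangle$ and appealing to this union-of-realcompacts lemma, one is confronted with the general Arhangel{$'$}ski\u{\i}--Tkachenko problem of whether local Dieudonn\'e completeness implies Dieudonn\'e completeness---the very question that Theorem~\ref{real:thm:locr} resolves affirmatively within the locally precompact category.
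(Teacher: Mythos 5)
Your global scheme is sound, and most of the individual arrows are carried out essentially as in the paper: (iv) $\Rightarrow$ (i) via Theorem~\ref{real:thm:bas}, the coset decompositions from Lemma~\ref{prel:lemma:osubgrp} for the arrows involving open subgroups, and Theorem~\ref{real:thm:c-ip}(b) for (i) $\Rightarrow$ (vii) and (ix) $\Rightarrow$ (viii). The problem is the step you yourself single out as the crux. The ``standard fact'' you invoke --- that a countable union of closed realcompact subspaces of a Tychonoff space is realcompact --- is false. The Mr\'owka--Isbell space \mbox{$\Psi=\omega\cup\mathcal{A}$} (where $\mathcal{A}$ is an infinite maximal almost disjoint family on $\omega$) is pseudocompact and non-compact, hence not realcompact; yet \mbox{$\Psi=\mathcal{A}\cup\bigcup_{n<\omega}\{n\}$} expresses it as a countable union of closed realcompact subspaces, because $\mathcal{A}$ is closed, discrete, and of cardinality at most \mbox{$\mathfrak{c}<\mathfrak{m}$} (Theorem~\ref{real:thm:discrete}), while each singleton is compact. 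So your proof of (iii) $\Rightarrow$ (viii) does not go through as written, and this is exactly where the real content of the theorem lives.

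The gap is repairable, but only by re-importing the $z$-embedding machinery you omitted and that the paper channels through Lemma~\ref{real:lemma:zups}. What is true is that a countable union of realcompact \emph{$z$-embedded} subspaces of $X$ is realcompact: by Theorem~\ref{real:thm:BlaHag}(b) each such piece equals its own $G_\delta$-closure in $\upsilon X$, hence is $G_\delta$-closed there, so the union is $G_\delta$-closed in $\upsilon X$; since $X$ is always $G_\delta$-dense in $\upsilon X$, this forces \mbox{$X=\upsilon X$}. In your situation each piece \mbox{$s\operatorname{cl}_G V=\operatorname{cl}_H(sV)$} is regular-closed in the locally precompact group $H$, which is an $Oz$-space by Corollary~\ref{real:cor:LPOz}; by Theorem~\ref{real:thm:Oz} it is therefore a zero-set of $H$, in particular $z$-embedded, and your countable-union argument then does deliver (viii). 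With that repair, your route through (iii) $\Rightarrow$ (viii) is a genuine alternative to the paper's (iii) $\Rightarrow$ (iv), which instead takes a symmetric precompact neighborhood, applies Lemma~\ref{real:lemma:zups} to get $\operatorname{cl}_G U$ $G_\delta$-closed in $\operatorname{cl}_{\widetilde G}U$, and translates that around $\widetilde G$ to show $G$ itself is $G_\delta$-closed in $\widetilde G$. But as submitted, the proof rests on a false lemma.
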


\begin{proof}
The logical scheme of the proof is as follows:
\begin{align}
\bfig
\morphism(0,0)/=>/<450,0>[\textrm{(vi)}`\textrm{(iv)};]
\morphism(0,0)/<=/<0,-330>[\textrm{(vi)}`\textrm{(v)};]
\morphism(0,-330)/<=/<450,330>[\textrm{(v)}`\textrm{(iv)};]
\morphism(450,0)/=>/<450,0>[\textrm{(iv)}`\textrm{(i)};]
\morphism(900,0)/=>/<450,0>[\textrm{(i)}`\textrm{(vii)};]
\morphism(450,0)/<=/<0,-330>[\textrm{(iv)}`\textrm{(iii)};]
\morphism(1350,0)/=>/<450,-330>[\textrm{(vii)}`\textrm{(viii)};]
\morphism(450,-330)/<=/<450,0>[\textrm{(iii)}`\textrm{(ii)};]
\morphism(1350,-330)/<=/<450,0>[\textrm{(ix)}`\textrm{(viii)};]
\morphism(900,-330)/<=/<450,0>[\textrm{(ii)}`\textrm{(ix)};]
\efig
\end{align}

The implications (iv) $\Rightarrow$ (v) $\Rightarrow$ (vi),
and (viii)  $\Rightarrow$ (ix) are obvious.

(vi) $\Rightarrow$ (iv): Let $H$ be an open subgroup of $G$ that is 
$G_\delta$-closed in $\widetilde G\hspace{-1pt}$. By 
Lemma~\ref{prel:lemma:osubgrp}(a), 
\mbox{$M\hspace{-2.25pt}: = \hspace{-1pt} 
\operatorname{cl}_{\widetilde G}\hspace{-1pt}H$} 
is an open subgroup of $\widetilde G\hspace{-1pt}$.
Thus, $G$ and $\widetilde G$ are homeomorphic  (as topological spaces)
to \mbox{$H \hspace{-2.5pt} \times \hspace{-1.5pt} (G 
\hspace{-0.5pt}/\hspace{-1pt} H)$} and 
 \mbox{$M \hspace{-2.5pt} \times \hspace{-1.5pt} (\widetilde G 
\hspace{-0.5pt}/\hspace{-1pt} M)$}, respectively,
where both \mbox{$ G\hspace{-0.5pt}/\hspace{-1pt} H$} and 
\mbox{$ \widetilde G\hspace{-0.5pt}/\hspace{-1pt} M$} are discrete
(cf.~\cite[5.26]{HewRos}).
By Lemma~\ref{prel:lemma:osubgrp}(c),~one~has
\mbox{$|G/H| \hspace{-2pt} = \hspace{-2pt} |\widetilde G/M|$}. 
Therefore, we obtain the following commutative diagram with the 
horizontal arrows representing homeomorphisms (as topological spaces):
\begin{align} \label{real:diag:GM}
\bfig
\square|alra|/->```->/<1000,450>%
[G`H \times (G/H)`\widetilde G`M \times (\widetilde G/M);\sim```\sim]
\morphism(0,370)/^{(}->/<0,-370>[`\widetilde G;]
\morphism(1000,370)/^{(}->/<0,-370>[`M \times (\widetilde G/M);]
\efig
\end{align}
Hence, the statement follows from the fact that
$H$ is $G_\delta$-closed~in~$M\hspace{-2pt}$.

(iv) $\Rightarrow$ (i): 
Since $G$ is locally precompact, $\widetilde G$ is locally 
compact, and by Theorem~\ref{real:thm:bas}(c), $\widetilde G$ is
Dieudonn\'e complete.
Thus, by Theorem~\ref{real:thm:bas}(a), $G$ is
Dieudonn\'e complete, being $G_\delta$-closed 
in~$\widetilde{G}\hspace{-1pt}$.

(i) $\Rightarrow$ (vii): Let $H$ be an $\omega$-precompact open subgroup 
of $G\hspace{-1pt}$. Then $H$ is closed in $G$, and so~by 
Remark~\ref{real:rem:rcvsDc}(c),~$H$~is Dieudonn\'e complete.
As \mbox{$ip(H) \hspace{-2pt} \leq \hspace{-1.5pt} \omega$},
by Theorem~\ref{real:thm:c-ip}(b),  $H$ is realcompact.

\pagebreak[3]

(vii) $\Rightarrow$ (viii): Since $G$ is locally precompact, there exists
\mbox{$U \hspace{-2.5pt} \in \hspace{-2pt} \mathcal{N}(G)$} such that $U$ 
is  \mbox{precompact}. Then
\mbox{$ip(U)\hspace{-2pt} \leq\hspace{-2pt} \omega$}, and so by
Theorem~\ref{real:thm:ip}(c), one has
\mbox{$ip(\langle U \rangle)  \hspace{-2pt} =\hspace{-1.5pt}
ip(U)\hspace{-2pt} \leq\hspace{-2pt} \omega$}. Therefore,
\mbox{$H \hspace{-2pt} :=\hspace{-2pt} \langle U \rangle$} is an 
$\omega$-precompact open subgroup of $G$. Hence, by (vii), $H$ is
realcompact.

(ix) $\Rightarrow$ (ii): 
Let $H$ be a Dieudonn\'e complete open subgroup of $G\hspace{-1pt}$.
Then $H$ is closed,~and~thus $G$ is locally Dieudonn\'e complete.

(ii) $\Rightarrow$ (iii): Let 
\mbox{$U \hspace{-2.5pt} \in \hspace{-2pt} \mathcal{N}(G)$} be such that
\mbox{$\operatorname{cl}_G \hspace{-1pt} U$} is Dieudonn\'e complete.
Since $G$ is locally precompact, there is 
\mbox{$V \hspace{-3pt} \in \hspace{-2pt} \mathcal{N}(G)$} such that
$V$ is precompact. Put 
\mbox{$W\hspace{-2.5pt}:= \hspace{-2pt} U \hspace{-3pt} \cap 
\hspace{-2.5pt}V\hspace{-2.5pt}$}. By Theorem~\ref{real:thm:ip}(a), one 
has \mbox{$ip(W) \hspace{-2pt}\leq
\hspace{-1.5pt} ip(V) \hspace{-2pt} \leq \hspace{-1.25pt} \omega 
\hspace{-2pt} <  \hspace{-2pt} \mathfrak{m}$}. By 
Remark~\ref{real:rem:rcvsDc}(c), 
\mbox{$\operatorname{cl}_G \hspace{-1pt} W$} is Dieudonn\'e complete, 
being a closed subspace of 
\mbox{$\operatorname{cl}_G \hspace{-1pt} U\hspace{-2pt}$}.
Therefore, by Theorem~\ref{real:thm:c-ip}, 
\mbox{$\operatorname{cl}_G \hspace{-1pt} W$} is realcompact. Hence,
$G$ is locally realcompact.

(iii) $\Rightarrow$ (iv): Let 
\mbox{$U\hspace{-2.5pt}\in\hspace{-2pt}\mathcal{N}(G)$} be such that
\mbox{$F\hspace{-2pt}:= \hspace{-1pt} \operatorname{cl}_G U$} is 
realcompact. By replacing $U$ with 
\mbox{$U\hspace{-2.25pt}\cap \hspace{-2pt} U^{-1}$} 
if necessary, we may assume that $U$ is {\em symmetric}
(i.e., \mbox{$U \hspace{-2.5pt}= \hspace{-1.75pt} 
U^{-1}\hspace{-0pt}$).}
Put \mbox{$K\hspace{-2.5pt}:=
\hspace{-1pt}\operatorname{cl}_{\widetilde G} U\hspace{-0.5pt}$}
and \mbox{$V\hspace{-2.5pt}:=
\hspace{-0.75pt}\operatorname{int}_{\widetilde G} K\hspace{-2pt}$}.
Since $U$ is symmetric, so are $K$ and $V\hspace{-2.5pt}$.
By Lemma~\ref{real:lemma:zups}, 
\mbox{$F \hspace{-2pt}= \hspace{-2pt}\upsilon F$} is the 
$G_\delta$-closure of $F$ in $\upsilon K\hspace{-1pt}$.
In particular, $F$ is $G_\delta$-closed in $K\hspace{-1.75pt}$.
Let \mbox{$x \hspace{-2pt}\in\hspace{-2pt} \widetilde 
G\backslash G$}. 
We may pick \mbox{$g\hspace{-2pt}\in\hspace{-2pt} (Vx) 
\hspace{-2pt}\cap\hspace{-2pt} G$\hspace{-1pt}}, because
$G$ is dense in $\widetilde G$; one has
\mbox{$x \hspace{-2pt} \in \hspace{-2pt} V\hspace{-1pt}g$}, 
as $V$ is symmetric. 
Since \mbox{$F\hspace{-1pt}g \hspace{-2pt} \subseteq \hspace{-2pt}
G\hspace{-1pt}$} and \mbox{$x\hspace{-2pt} \not\in \hspace{-2pt} 
G\hspace{-1pt}$}, clearly \mbox{$x \hspace{-2pt} \not\in \hspace{-2pt}
F\hspace{-1pt}g$}.
Thus, there is a $G_\delta$-set $A^\prime$ in $\widetilde G$ such  that
\mbox{$x \hspace{-2pt} \in \hspace{-2pt} A^\prime$} and
\mbox{$A^\prime \hspace{-2.5pt} \cap \hspace{-2pt} Fg  \hspace{-2pt} = 
\hspace{-1pt} \emptyset$}, because  $Kg$ is closed in $\widetilde G$ 
and $F\hspace{-1pt}g$ is $G_\delta$-closed in 
$K\hspace{-1pt}g$. Therefore, 
\mbox{$A\hspace{-2pt}: =\hspace{-2pt} 
A^\prime \hspace{-2.5pt} \cap \hspace{-2pt} (V\hspace{-1pt}g)$} is a 
$G_\delta$-set in $\widetilde G$ that contains $x$, and it satisfies
\begin{align}
A \cap G = A^\prime \cap (Vg) \cap G  = A^\prime \cap ((V \cap G) g)
\subseteq A^\prime \cap ((K \cap G) g)
= A^\prime \cap (Fg)  =\emptyset.
\end{align}

\vspace{-3pt}

\noindent
Hence, $G$ is $G_\delta$-closed in $\widetilde G\hspace{-1pt}$, as 
desired.
\end{proof}

The next theorem was inspired by \cite[3.8]{ComfHernTrig}.

\begin{theorem} Let $G$ be a locally\label{real:thm:real}
precompact group. The following statements are equivalent:

\begin{myromanlist}

\item
$G$ is locally realcompact, and 
\mbox{$ip(G)\hspace{-2pt}<\hspace{-2.25pt}\mathfrak{m}$};

\item
$G$ is Dieudonn\'e complete, and 
\mbox{$ip(G)\hspace{-2pt}<\hspace{-2.25pt}\mathfrak{m}$};

\item
$G$ is locally Dieudonn\'e complete, and 
\mbox{$ip(G)\hspace{-2pt}<\hspace{-2.25pt}\mathfrak{m}$};

\item
$G$ is $G_\delta$-closed in $\widetilde G$, and
\mbox{$ip(G)\hspace{-2pt}<\hspace{-2.25pt}\mathfrak{m}$};

\item
$\widetilde G$ is realcompact, and $G$ is $G_\delta$-closed in 
\mbox{$\widetilde G$};

\item
$G$ is realcompact.

\end{myromanlist}
\end{theorem}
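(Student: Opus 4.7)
My plan is to observe that Theorem~\ref{real:thm:locr} already supplies the equivalence (i)$\Leftrightarrow$(ii)$\Leftrightarrow$(iii)$\Leftrightarrow$(iv): the common extra hypothesis $ip(G)<\mathfrak m$ is shared by all four, while the four underlying topological properties (\emph{locally realcompact}, \emph{Dieudonn\'e complete}, \emph{locally Dieudonn\'e complete}, and \emph{$G_\delta$-closed in $\widetilde G$}) are four of the nine pairwise equivalent conditions of that theorem. With those four identified, it remains to splice (v) and (vi) into the equivalence class, which I would do via the cycle (iv)$\Rightarrow$(v)$\Rightarrow$(vi)$\Rightarrow$(ii).

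I would handle the two short implications first. For (iv)$\Rightarrow$(v): since $\widetilde G$ is locally compact it is Dieudonn\'e complete by Theorem~\ref{real:thm:bas}(c); and since $G$ is dense in $\widetilde G$, Theorem~\ref{real:thm:ip}(b) gives $ip(\widetilde G)=ip(G)<\mathfrak m$. Hence $c(\widetilde G)\le ip(\widetilde G)<\mathfrak m$ by Theorem~\ref{real:thm:c-ip}(a), and Shirota's criterion (Theorem~\ref{real:thm:cell}) makes $\widetilde G$ realcompact. For (v)$\Rightarrow$(vi) I would simply apply Theorem~\ref{real:thm:bas}(a) to the $G_\delta$-closed subspace $G$ of the realcompact space $\widetilde G$.

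The real work lies in (vi)$\Rightarrow$(ii), the step I expect to be the main obstacle: converting the purely topological property of realcompactness into the cardinal inequality $ip(G)<\mathfrak m$. Realcompactness implies Dieudonn\'e completeness by Remark~\ref{real:rem:rcvsDc}(a), so only the bound $ip(G)<\mathfrak m$ remains. My device is to choose a precompact $U\in\mathcal N(G)$ and set $H:=\langle U\rangle$; then Theorem~\ref{real:thm:ip}(c) gives $ip(H)=ip(U)\le\omega$, and $H$ is a clopen subgroup. The coset decomposition gives a topological homeomorphism $G\cong H\times(G/H)$ with $G/H$ discrete (exactly as in the argument for (vi)$\Rightarrow$(iv) of Theorem~\ref{real:thm:locr}), so fixing any $h_0\in H$ presents $G/H$ as a closed discrete subspace of $G$. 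By Remark~\ref{real:rem:rcvsDc}(c), $G/H$ is then realcompact and discrete; Theorem~\ref{real:thm:discrete} forces $|G/H|<\mathfrak m$; and covering each coset by $\le ip(H)=\omega$ translates of any chosen neighborhood yields $ip(G)\le\omega\cdot|G/H|<\mathfrak m$, completing the cycle.
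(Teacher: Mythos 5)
Your proof is correct, and its overall architecture --- dispatching (i)--(iv) via Theorem~\ref{real:thm:locr} and then closing the cycle (iv)$\Rightarrow$(v)$\Rightarrow$(vi)$\Rightarrow$(ii) --- matches the paper's. The differences lie in which lemmas carry two of the three remaining implications. For (iv)$\Rightarrow$(v) the paper does not invoke Shirota's criterion on $\widetilde G$ directly: it takes an $\omega$-precompact open subgroup $H$ of $G$, shows that $M=\operatorname{cl}_{\widetilde G}H$ is Lindel\"of (Theorem~\ref{real:thm:LCLindelof}) and hence realcompact, bounds $|\widetilde G/M|\le ip(\widetilde G)=ip(G)<\mathfrak{m}$, and concludes from the homeomorphism $\widetilde G\cong M\times(\widetilde G/M)$; your route via $c(\widetilde G)\le ip(\widetilde G)<\mathfrak{m}$ and Theorem~\ref{real:thm:cell} is shorter and amounts to Theorem~\ref{real:thm:c-ip}(b) applied with $U=\widetilde G$. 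For (v)$\Rightarrow$(vi) your one-line appeal to Theorem~\ref{real:thm:bas}(a) is more direct than the paper's use of Lemma~\ref{real:lemma:zups} (which produces $\upsilon G=G$ via $z$-embedding); both are legitimate since (v) already asserts that $G$ is $G_\delta$-closed in $\widetilde G$. Your (vi)$\Rightarrow$(ii) is the paper's (vi)$\Rightarrow$(i) in different clothing: the slice $\{h_0\}\times(G/H)$ is exactly the paper's closed discrete set of coset representatives, and the covering estimate $ip(G)\le\omega\cdot|G/H|$ is identical. Nothing is missing.
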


\begin{proof}
The equivalences (i) $\Leftrightarrow$ (ii) $\Leftrightarrow$ (iii)
$\Leftrightarrow$ (iv) follow by Theorem~\ref{real:thm:locr}. We note that 
the implication (ii) $\Rightarrow$ (vi) can also be obtained as a 
consequence of Theorem~\ref{real:thm:c-ip}(b).

(iv) $\Rightarrow$ (v): Let $H$ be an $\omega$-precompact open subgroup 
of $G\hspace{-1pt}$, and put \mbox{$M \hspace{-2.5pt} := \hspace{-1pt}
\operatorname{cl}_{\widetilde G} H\hspace{-1.75pt}$.}
(The~\mbox{existence} of such a subgroup $H$ follows from the local
precompactness of $G$; see the proof of Theorem~\ref{real:thm:locr}.) By 
Theorem~\ref{real:thm:ip}(b), $M$ is $\omega$-precompact, and so by 
Theorem~\ref{real:thm:LCLindelof}, $M$ is Lindel\"of. 
Therefore,~by Theorem~\ref{real:thm:bas}(b), $M$ is realcompact. 
Furthermore, by Theorem~\ref{real:thm:ip}(b), 
\mbox{$|\widetilde G\hspace{-0.5pt}/\hspace{-1pt} M| \hspace{-2pt} \leq
\hspace{-2pt} ip(\widetilde G) \hspace{-2pt}= \hspace{-1.5pt} ip(G)$},~%
\mbox{as} 
\mbox{$M \hspace{-2.5pt}\in\hspace{-2pt} \mathcal{H}(\widetilde G)$.} 
Thus, \mbox{$|\widetilde G\hspace{-0.5pt}/\hspace{-1pt} M|
\hspace{-2pt}<\hspace{-2.25pt}\mathfrak{m}$}, and so  by 
Theorem~\ref{real:thm:discrete},
the discrete space 
\mbox{$\widetilde G\hspace{-0.5pt}/\hspace{-1pt} M$}~is~\mbox{realcompact.}
On the other hand, by 
Lemma~\ref{prel:lemma:osubgrp}(a), 
\mbox{$M \hspace{-2.5pt}\in\hspace{-2pt} \mathcal{H}(\widetilde G)$,}
and so
$\widetilde G$ is homeomorphic (as a~topological space)
to \mbox{$M \hspace{-2.5pt} \times \hspace{-1.5pt} (\widetilde G 
\hspace{-0.5pt}/\hspace{-1pt} M)$}. Hence, by 
Remark~\ref{real:rem:rcvsDc}(c), $\widetilde G$ is realcompact, 
being homeomorphic to a product of realcompact spaces.

(v) $\Rightarrow$ (vi): By Lemma~\ref{real:lemma:zups}, 
$\upsilon G$ is the $G_\delta$-closure of $G$ in 
\mbox{$\upsilon \widetilde G \hspace{-2pt} =\hspace{-2pt}
\widetilde G\hspace{-1pt}$.}  Thus, 
\mbox{$\upsilon G \hspace{-2pt} =\hspace{-2pt} G\hspace{-1pt}$.}

(vi) $\Rightarrow$ (i): Since $G$ is realcompact, in particular, it is 
locally realcompact. In order to show that 
\mbox{$ip(G)\hspace{-2pt}<\hspace{-2.25pt}\mathfrak{m}$}, let 
\mbox{$V\hspace{-2pt} \in \hspace{-2pt}\mathcal{N}(G)$}.
Pick an $\omega$-precompact open 
subgroup $H$ of $G$. (The existence~of~such a~subgroup $H$ follows from 
the local  precompactness of $G$, as in the proof of 
Theorem~\ref{real:thm:locr}.) Then 
$H$ can be covered by countably many translates of $V\hspace{-2.5pt}$, and 
so $G$ can be covered by at most 
\mbox{$\omega\hspace{-1pt}\cdot \hspace{-1pt} |G/H|$}-many 
translates~of~$V\hspace{-2.5pt}$. Thus, one has
\mbox{$ip(G)\hspace{-2pt} \leq \hspace{-2pt} 
\omega|G\hspace{-0.5pt}/\hspace{-1pt}H|$}, and it 
suffices to show that 
\mbox{$|G\hspace{-0.5pt}/\hspace{-1pt}H|
\hspace{-2pt}<\hspace{-2.25pt}\mathfrak{m}$}.
Let $X$ be a~set of 
representatives for \mbox{$G\hspace{-0.5pt}/\hspace{-1pt}H\hspace{-1pt}$,} 
that is, \mbox{$|X\hspace{-2.5pt}\cap\hspace{-2pt} (Hg)| 
\hspace{-2pt}=  \hspace{-1.5pt}1$} for every 
\mbox{$g\hspace{-2pt}\in\hspace{-2pt}G\hspace{-1.5pt}$.} Then $X$ is 
discrete and closed in $G$ (because each $Hg$ is open), and consequently,
$X$ is a discrete realcompact space. Hence, by 
Theorem~\ref{real:thm:discrete},
\mbox{$|X| \hspace{-2pt} = \hspace{-2pt}
|G\hspace{-0.5pt}/\hspace{-1pt}H|\hspace{-2pt}<\hspace{-2.25pt}\mathfrak{m}$}, 
as desired.
\end{proof}

\begin{remark}\mbox{ }

\begin{myalphlist}

\item
Suppose that Ulam-measurable cardinals exist, and put
\mbox{$G\hspace{-2pt}:=\hspace{-1.5pt} 
(\mathbb{Z}/2\mathbb{Z})^\mathfrak{m}$}, where $G$ is 
equipped with the product topology. Since $G$ is compact, 
it is realcompact and $\omega$-precompact,~and thus $G$ satisfies all 
conditions  of Theorem~\ref{real:thm:real}, but
\mbox{$|G| \hspace{-2pt}=\hspace{-1.5pt} 2^\mathfrak{m} 
\hspace{-2pt} > \hspace{-2pt} \mathfrak{m}$}. This example shows that 
\begin{myromanlist}[enumii]
\setlength{\itemindent}{25pt}

\item[(i$'$)]
$G$ is locally realcompact, and
\mbox{$|G|<\hspace{-2.25pt}\mathfrak{m}$}

\vspace{6pt}
\end{myromanlist}
cannot be added to the equivalent conditions listed 
in Theorem~\ref{real:thm:real}.

\item
We note in passing the availability of an alternative proof for the 
implication (vi) $\Rightarrow$ (i) in Theorem~\ref{real:thm:real}: If 
\mbox{$ip(G)\hspace{-2pt}\geq \hspace{-2pt}\mathfrak{m}$}, then 
there are \mbox{$U\hspace{-2.5pt} \in\hspace{-2pt} \mathcal{N}(G)$} and a 
(recursively  defined) $\mathfrak{m}$-sequence 
\mbox{$X\hspace{-2pt} =\hspace{-1pt} 
\{x_\eta \mid \eta \hspace{-2pt} < \hspace{-2pt} \mathfrak{m}\}$} in 
$G$ such that \mbox{$x_0 \hspace{-2pt} =  \hspace{-2pt}e$} and
\mbox{$x_\eta\hspace{-1.5pt}\notin\hspace{-2pt}
\bigcup\limits_{\xi<\eta}x_\xi U\hspace{-2pt}$}. 
Then for \mbox{$V\hspace{-2pt}\in\hspace{-2pt}\mathcal{N}(G)$} chosen 
such that \mbox{$V\hspace{-2pt}= \hspace{-2pt}V^{-1}$} and 
\mbox{$V^2 \hspace{-2pt}\subseteq \hspace{-2pt} U\hspace{-1pt}$}, one has
\mbox{$|gV \hspace{-1.5pt}\cap\hspace{-1.5pt} X|
\hspace{-1pt}\leq\hspace{-1pt} 1$} for every 
\mbox{$g \hspace{-2pt} \in \hspace{-2pt}G$}.
Therefore, $X$ is \mbox{discrete}~and closed in $G\hspace{-1pt}$, and of 
non-Ulam-measurable cardinality, contrary to (vi).

\end{myalphlist}
\end{remark}

\section{Hereditary realcompactness}

\label{sect:newHR}

A topological space $X$ is {\em hereditarily realcompact} if every 
subspace of $X$ is realcompact. In this section, we characterize 
hereditary realcompactness in the class of locally precompact groups with 
a ``well-behaved" conjugation structure. 
We rely in this section on the following properties of hereditary  
realcompactness.

\begin{ftheorem}[\pcite{8.18}{GilJer}] \label{newHR:thm:coarser}
If the space $X$ admits a coarser hereditarily 
realcompact topology, then $X$ is hereditarily realcompact.
\end{ftheorem}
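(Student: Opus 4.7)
The plan is to reduce first to showing that $(X,\mathcal{T})$ itself is realcompact, and then to establish that by a combined-embedding argument which exploits the hereditary realcompactness of the coarser topology. For the reduction: given any subspace $Y\subseteq X$, the relative topology $\mathcal{T}'|_Y$ is coarser than $\mathcal{T}|_Y$ and is hereditarily realcompact as a subspace of $(X,\mathcal{T}')$; so establishing the absolute implication ``coarser hereditarily realcompact $\Rightarrow$ realcompact'' and applying it to each $(Y,\mathcal{T}|_Y)$ yields hereditary realcompactness of $(X,\mathcal{T})$.

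For the main implication, realcompactness of $(X,\mathcal{T}')$ provides, via Definition~\ref{real:def:rc+DC}(a), a closed embedding $\phi\colon(X,\mathcal{T}')\hookrightarrow\mathbb{R}^{I}$ with coordinate functions $\{f_i\}_{i\in I}$, which are also $\mathcal{T}$-continuous since $\mathcal{T}'\subseteq\mathcal{T}$. Tychonoff's embedding theorem supplies further $\mathcal{T}$-continuous functions $\{g_j\}_{j\in J}$ whose diagonal $\psi\colon(X,\mathcal{T})\hookrightarrow[0,1]^{J}$ is a topological embedding. Their combined diagonal $\Psi=(\phi,\psi)\colon(X,\mathcal{T})\to\phi(X)\times[0,1]^{J}$ is then a continuous topological embedding into the ambient space $\phi(X)\times[0,1]^{J}$, which is realcompact (as $\phi(X)$ is closed in $\mathbb{R}^{I}$) by Remark~\ref{real:rem:rcvsDc}(c).

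The hard part, which I expect to be the main obstacle, is to show that the image $\Psi(X)$ is realcompact as a subspace of $\phi(X)\times[0,1]^{J}$. Since $\Psi(X)$ is the graph of the map $\phi(x)\mapsto\psi(x)$, which is $\mathcal{T}$-continuous but typically not $\mathcal{T}'$-continuous, $\Psi(X)$ need not be closed in the product. It is here that full hereditary realcompactness of $(X,\mathcal{T}')$, as opposed to mere realcompactness, becomes indispensable: the strategy is to exhibit $\Psi(X)$ as an intersection of realcompact subspaces of the ambient product, exploiting the fact that every subspace of $\phi(X)\cong(X,\mathcal{T}')$ is realcompact, and then to invoke Remark~\ref{real:rem:rcvsDc}(c). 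An alternative route via $z$-ultrafilters would take a free $z$-ultrafilter on $(X,\mathcal{T})$ with the countable intersection property and, via the continuous identity $(X,\mathcal{T})\to(X,\mathcal{T}')$, derive a contradiction with the realcompactness of a suitable subspace of $(X,\mathcal{T}')$; this would share the same delicate step of translating $\mathcal{T}$-zero-sets into $\mathcal{T}'$-phenomena, which is where the subtlety of the argument genuinely lies.
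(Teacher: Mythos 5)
The statement is quoted verbatim from \cite[8.18]{GilJer} and the paper supplies no proof of its own, so your attempt can only be judged on its internal merits: it is incomplete, and the missing step is the entire content of the theorem. Your opening reduction --- that it suffices to prove the absolute implication ``a space with a coarser hereditarily realcompact topology is realcompact'' and apply it to every subspace --- is correct. But the embedding construction that follows makes no progress on that absolute implication. Since $\Psi=(\phi,\psi)$ is a topological embedding of $(X,\mathcal{T})$, the claim ``$\Psi(X)$ is realcompact'' is \emph{literally equivalent} to the claim ``$(X,\mathcal{T})$ is realcompact'' that you set out to prove; every Tychonoff space embeds into a realcompact space, so placing $X$ inside the realcompact product $\phi(X)\times[0,1]^{J}$ carries no information by itself. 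The ``strategy'' you sketch for the hard step is moreover circular as stated: if one writes $\Psi(X)$ as the intersection over $j\in J$ of the partial graphs $\{(y,t):t_j=g_j(\phi^{-1}(y))\}$, each such set is homeomorphic to $X$ carrying an intermediate topology (namely $\mathcal{T}'$ refined by the single function $g_j$), and proving \emph{that} realcompact is again an instance of the original problem. No new mechanism is introduced that actually exploits hereditary realcompactness of $(X,\mathcal{T}')$.

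What is genuinely needed is a device converting hereditary realcompactness of the coarse topology into convergence information for the fine one. The standard route (Gillman--Jerison) is: by the intersection lemma (Remark~\ref{real:rem:rcvsDc}(c)), hereditary realcompactness of $(X,\mathcal{T})$ reduces to realcompactness of $X$ and of each complement $X\setminus\{x\}$; for the crucial absolute step one takes a $z$-ultrafilter with the countable intersection property on $(X,\mathcal{T})$, pushes it forward along the continuous identity to $(X,\mathcal{T}')$, uses realcompactness of $(X,\mathcal{T}')$ to obtain a cluster point $x_0$, and then uses realcompactness of the \emph{punctured} space $(X,\mathcal{T}')\setminus\{x_0\}$ to force the original ultrafilter to be fixed at $x_0$. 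The puncturing is where hereditary (rather than mere) realcompactness is indispensable --- and it must be, since the absolute statement fails for a merely realcompact coarser topology: a discrete space of Ulam-measurable cardinality admits a coarser compact topology yet is not realcompact. You correctly identify where the difficulty sits, and your $z$-ultrafilter ``alternative route'' even points in the right direction, but neither version is carried out; as it stands the proposal is a plan rather than a proof.
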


Recall that a topological space $X$ has {\em countable pseudocharacter} if 
every singleton in $X$ is a~$G_\delta$-set (cf.~\cite[2.1]{GLCLTG}). 

\begin{ftheorem}[\pcite{8.15}{GilJer}] \label{newHR:thm:HRpsi}
If a space $X$ is realcompact and has  countable pseudocharacter, then 
$X$ is hereditarily realcompact.
\end{ftheorem}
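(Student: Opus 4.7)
The plan is to show that every subspace $Y \subseteq X$ is realcompact by displaying $Y$ as an intersection of realcompact subspaces of $X$ and then invoking Remark~\ref{real:rem:rcvsDc}(c).

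First, I would observe that in any Tychonoff space with countable pseudocharacter, every singleton is a zero-set. Indeed, if $\{x\} = \bigcap_n U_n$ with each $U_n$ open, then by complete regularity one picks continuous $f_n \colon X \to [0,1]$ with $f_n(x) = 0$ and $f_n \equiv 1$ on $X \setminus U_n$; the continuous function $f := \sum_n 2^{-n} f_n$ then satisfies $Z(f) = \{x\}$. Consequently, each $X \setminus \{x\}$ is a cozero-set of $X$.

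Next, I would invoke the classical fact that every cozero-set of a realcompact space is realcompact. For any continuous $f \colon X \to \mathbb{R}$, the map $x \mapsto (x, 1/f(x))$ embeds $\operatorname{coz}(f)$ as the zero-set $\{(x,t) : t f(x) = 1\}$ of $X \times \mathbb{R}$; since products and closed subspaces of realcompact spaces are realcompact (Remark~\ref{real:rem:rcvsDc}(c)), $\operatorname{coz}(f)$ is realcompact. In particular, each $X \setminus \{x\}$ is realcompact.

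Finally, given an arbitrary $Y \subseteq X$, the identity
\begin{align*}
Y = \bigcap_{x \in X \setminus Y} (X \setminus \{x\})
\end{align*}
exhibits $Y$ as an intersection of realcompact subspaces of $X$, so Remark~\ref{real:rem:rcvsDc}(c) yields that $Y$ is realcompact. The only step not immediate from the excerpt is the lemma that cozero-sets of realcompact spaces are realcompact; this is the main obstacle, but it is classical and the zero-set embedding into $X \times \mathbb{R}$ sketched above handles it cleanly.
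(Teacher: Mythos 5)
Your proof is correct and complete: the paper states this result without proof, citing \cite[8.15]{GilJer}, and your argument (singletons are zero-sets under countable pseudocharacter, cozero-sets of realcompact spaces are realcompact via the closed embedding $x \mapsto (x,1/f(x))$ into $X \times \mathbb{R}$, and arbitrary subspaces are intersections of complements of singletons) is precisely the classical proof found there. All three steps, including the appeal to Remark~\ref{real:rem:rcvsDc}(c) for products, closed subspaces, and intersections, check out.
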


Clearly, if $X$ admits a coarser first-countable topology, then every 
singleton in $X$ is the intersection of countably many open subsets, and 
thus $X$ has countable pseudocharacter. For locally precompact groups, 
the converse is also true.

\begin{theorem} \label{newHR:thm:hommet}
Let $G$ be a locally precompact group. Then $G$ has countable 
pseudocharacter if and only if $G$ admits a coarser homogeneous metrizable 
topology. Moreover, in this case, the metric can be taken to be left invariant.
\end{theorem}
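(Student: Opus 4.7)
The plan is to handle the two directions separately; the substance lies in the forward direction, where the classical Birkhoff--Kakutani construction delivers the required metric.

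For the reverse direction, if $G$ admits a coarser metrizable topology $\tau'$, then $(G,\tau')$ is first countable, so every singleton is a $G_\delta$-set in $\tau'$; since $\tau'$ is coarser than the original topology $\tau_G$, the same holds in $\tau_G$, giving $G$ countable pseudocharacter. Homogeneity plays no role here.

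For the main direction, assume $\{e\}=\bigcap_{n<\omega}U_n$ with $U_n\in\mathcal{N}(G)$. First I would recursively construct a decreasing sequence $(V_n)_{n\geq 0}$ of open symmetric neighborhoods of $e$ satisfying $V_{n+1}V_{n+1}V_{n+1}\subseteq V_n$ and $V_n\subseteq U_n$. This is possible in any topological group by continuity of multiplication and inversion at the identity; in particular $\bigcap_n V_n=\{e\}$. Applying the Birkhoff--Kakutani construction to this sequence yields a left-invariant pseudometric $d$ on $G$ satisfying the standard sandwich
\begin{align*}
V_{n+1}\subseteq\{x\in G:d(x,e)<2^{-n}\}\subseteq V_n\qquad(n\geq 1).
\end{align*}
The right-hand inclusion forces $d(x,e)=0\Rightarrow x\in\bigcap_n V_n=\{e\}$, so by left-invariance $d$ is a genuine metric.

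It remains to verify that the induced topology $\tau_d$ is coarser than $\tau_G$ and homogeneous. For coarseness, given $y\in B(x,r)$, left-invariance yields $B(y,r-d(x,y))=y\cdot B(e,r-d(x,y))$, and by the left-hand inclusion of the sandwich $B(e,\rho)\supseteq V_{n+1}$ whenever $2^{-n}<\rho$; hence $B(x,r)$ is a $\tau_G$-neighborhood of each of its points, so $\tau_d\subseteq\tau_G$. For homogeneity, each left translation $L_g$ is a $d$-isometry by left-invariance, hence a $\tau_d$-homeomorphism, and the $L_g$ act transitively on $G$. This simultaneously establishes the ``moreover'' clause. The only real obstacle is producing the Birkhoff--Kakutani pseudometric correctly; no step of the argument appears to rely on local precompactness, which I expect is included only to fix the paper's context.
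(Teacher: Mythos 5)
Your proof is correct, but it takes a genuinely different route from the paper's. The paper works in the Weil completion $L=\widetilde{G}$: it builds a compact subgroup $M=\bigcap_n V_n$ of $L$ with $M\cap G=\{e\}$, observes that the coset space $L/M$ is locally compact of countable pseudocharacter, hence first-countable, hence metrizable by a left-invariant metric (Theorem~\ref{newHR:thm:coset}), and then transports the topology of $\pi(G)\subseteq L/M$ back to $G$ along the injection $\pi_{|G}$. Local precompactness is essential to that argument: it supplies the locally compact $L$, the compactness of $M$, and the first-countability of $L/M$. You instead run the Birkhoff--Kakutani prenorm construction directly inside $G$, using only a sequence of open symmetric neighborhoods with $V_{n+1}V_{n+1}V_{n+1}\subseteq V_n\subseteq U_n$; the resulting left-invariant pseudometric satisfies the sandwich you quote, is a metric because $\bigcap_n V_n=\{e\}$, and induces a coarser homogeneous topology for exactly the reasons you give (every $d$-ball about $x$ contains some $xV_{n+1}$, and left translations are $d$-isometries acting transitively). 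This is more elementary and proves more: your forward implication holds for an arbitrary Hausdorff topological group, confirming your suspicion that local precompactness is not needed for the statement as literally phrased. What the paper's detour buys is the compact subgroup $M$ itself, which is exploited in the discussion immediately following the proof: when $M$ is normal in $L$ (e.g.\ when $G$ is abelian, or $\omega$-precompact via a Kakutani--Kodaira argument), $L/M$ is a metrizable topological \emph{group}, so $G$ acquires a coarser metrizable \emph{group} topology---something your left-invariant metric, whose topology need not make inversion or right translations continuous, does not by itself provide.
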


In order to prove Theorem~\ref{newHR:thm:hommet}, we use the following 
classic result (see also the paragraph following the proof of the  
theorem).

\begin{ftheorem}[\pcite{8.14(d)}{HewRos}] \label{newHR:thm:coset}
Let $L$ be a topological group, and $M$ a compact subgroup 
of $L\hspace{-0.25pt}$. Then the coset space 
\mbox{$L\hspace{-0.25pt}/\hspace{-1.25pt}M$}
is metrizable if and only if it is first-countable. 
Moreover, in this case, the metric can be taken to be left invariant.
\end{ftheorem}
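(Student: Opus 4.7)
The forward direction is immediate, since every metrizable space is first-countable. For the converse, assume $L/M$ is first-countable. Because $L/M$ is a homogeneous $L$-space under the left translation action, first-countability at any one point is equivalent to first-countability at the distinguished coset $M \in L/M$, and it suffices to produce a left-$L$-invariant metric on $L/M$ compatible with the coset topology.

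The plan is a Birkhoff--Kakutani-style construction performed inside $L$ but steered by the neighborhood behavior of the compact fiber $M$: build a decreasing sequence of symmetric identity neighborhoods $V_n$ in $L$ satisfying $V_{n+1}^3 \subseteq V_n$ and for which $\{V_n M\}_n$ is a base of neighborhoods of $M$ in $L$; apply Birkhoff--Kakutani to obtain a continuous left-invariant pseudo-metric $\rho$ on $L$; then descend $\rho$ to $L/M$ by averaging over the compact fiber $M$.

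Concretely, I first fix a decreasing countable base $\{O_n\}_{n<\omega}$ of neighborhoods of $M$ in $L/M$; their saturated preimages $\widetilde O_n := \pi^{-1}(O_n)$ are a decreasing base of open neighborhoods of $M$ in $L$. Compactness of $M$ together with a tube-lemma argument allows me to choose symmetric $V_n \in \mathcal{N}(L)$ with $V_n M \subseteq \widetilde O_n$; recursively shrinking and invoking continuity of multiplication at $e$, I arrange in addition that $V_{n+1}^3 \subseteq V_n$, while $\{V_n M\}$ remains cofinal among neighborhoods of $M$. The Birkhoff--Kakutani construction applied to $\{V_n\}$ now yields a continuous left-invariant pseudo-metric $\rho$ on $L$ whose kernel $\{x : \rho(e,x)=0\}$ equals $\bigcap_n V_n$ and whose $2^{-n}$-balls at $e$ are comparable to $V_n$. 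I then define
\[
\bar\rho(xM,yM) \ := \ \inf_{m \in M} \rho(x, ym);
\]
compactness of $M$ ensures the infimum is attained, left-invariance of $\rho$ transfers to left-$L$-invariance of $\bar\rho$, and symmetry and the triangle inequality descend routinely.

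The pivotal observation is that $\bigcap_n V_n \subseteq M$: for $x \in \bigcap_n V_n$ one has $\pi(x) \in \bigcap_n \pi(V_n) \subseteq \bigcap_n O_n = \{M\}$ by Hausdorffness of $L/M$ (which holds since $M$ is closed in $L$), so $x \in M$. This inclusion forces $\bar\rho(xM,yM) = 0$ to imply $xM = yM$, making $\bar\rho$ a genuine metric; combined with the cofinality of $\{V_n M\}$, it also identifies the $\bar\rho$-topology on $L/M$ with the coset topology. The principal obstacle lies in the preliminary step: coordinating the selection of the $V_n$ so that they are simultaneously symmetric identity neighborhoods, the saturations $V_n M$ exhaust a base at $M$, and the Birkhoff--Kakutani recursion $V_{n+1}^3 \subseteq V_n$ is enforced—a coordination that crucially exploits the compactness of $M$ via the tube-lemma passage from saturated neighborhoods of $M$ to ones of the form $V_n M$.
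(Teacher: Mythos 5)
The paper offers no proof of this statement---it is quoted verbatim from Hewitt and Ross [8.14(d)]---so your attempt must stand on its own, and while your overall strategy (Birkhoff--Kakutani applied to symmetric $V_n$ with $V_{n+1}^3\subseteq V_n$ whose saturations $V_nM$ form a base at $M$, followed by descent of the left-invariant pseudometric to $L/M$) is the standard and correct route, the descent step has a genuine gap. The formula $\bar\rho(xM,yM):=\inf_{m\in M}\rho(x,ym)$ is invariant under replacing $y$ by $ym_2$, but \emph{not} under replacing $x$ by $xm_1$: writing $\rho(x,y)=N(x^{-1}y)$ with $N$ the Birkhoff--Kakutani ``norm,'' one must compare $\inf_{m}N(m_1^{-1}x^{-1}ym)$ with $\inf_{m}N(x^{-1}ym)$, and these differ in general, since left invariance of $\rho$ gives no control over multiplication by $m_1^{-1}$ on the \emph{left} of the argument of $N$. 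The same obstruction defeats symmetry (one is comparing $\inf_m N(bm)$ with $\inf_m N(m^{-1}b)$) and the triangle inequality (which requires chaining through a single representative of the middle coset). So these properties do not ``descend routinely'' as claimed; as written, $\bar\rho$ need not even be a well-defined function on $L/M\times L/M$.

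The missing idea is to arrange that $\rho$ is in addition right-$M$-invariant, equivalently that $N$ is invariant under conjugation by elements of $M$. This is where compactness of $M$ must be used a second time, beyond the tube-lemma step you do carry out: at each stage replace $V_n$ by $\bigcap_{m\in M}mV_nm^{-1}$, which is still a neighborhood of $e$ precisely because $M$ is compact (apply a tube-lemma argument to the continuous map $(m,x)\mapsto mxm^{-1}$ on $M\times L$) and is manifestly $M$-conjugation-invariant. The Birkhoff--Kakutani norm built from such $V_n$ satisfies $N(mam^{-1})=N(a)$ for all $m\in M$, hence $\rho(xm,ym)=\rho(x,y)$; with this extra invariance your $\bar\rho$ becomes well defined, symmetric, and subadditive, and the rest of your argument (the identification of the kernel with $\bigcap_nV_n\subseteq M$, attainment of the infimum, and the matching of the metric topology with the quotient topology via cofinality of $\{V_nM\}$) goes through. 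Your opening plan does mention ``averaging over the compact fiber,'' and averaging $\rho(xm,ym)$ over $M$ against Haar measure would be an alternative repair, but the infimum you actually wrote down is not an average and does not by itself close the gap.
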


\begin{proof}[Proof of Theorem~\ref{newHR:thm:hommet}.]
Since necessity is clear, we focus on sufficiency of the condition. 
Put \mbox{$L\hspace{-2pt}: = \hspace{-1.75pt} 
\widetilde G\hspace{-1pt}$}, and
suppose that $G$ has countable pseudocharacter, that is, $G$ is 
discrete in the $G_\delta\mbox{-}$topology. 
Then there is a $G_\delta$-set $A$ in $L$ such that 
\mbox{$A\hspace{-2pt} \cap \hspace{-2pt} G \hspace{-2pt} = \hspace{-2pt}
\{e\}$}; there exist \mbox{$U_n \hspace{-2.5pt} \in 
\hspace{-2pt} \mathcal{N}(L)$} such that
\mbox{$A \hspace{-2pt} = \hspace{-4pt} \bigcap\limits_{n=1}^\infty
\hspace{-3pt} U_n$}. Since $G$ is locally precompact, its completion $L$ 
is locally compact. Let 
\mbox{$V_0 \hspace{-2.5pt} \in\hspace{-2pt} \mathcal{N}(L)$} be such that
\mbox{$\operatorname{cl}_L \hspace{-1pt} V_0$} is compact. 
For each \mbox{$n\hspace{-2pt}\geq \hspace{-2pt} 1$},
we pick recursively 
\mbox{$V_n \hspace{-2.5pt} \in\hspace{-2pt} \mathcal{N}(L)$} that 
satisfies
\mbox{$V_n V_n \hspace{-2.25pt} \subseteq \hspace{-2pt} 
V_{n-1} \hspace{-2.25pt}\cap\hspace{-2pt} U_n$} and
\mbox{$V_n \hspace{-2.25pt} = \hspace{-1.75pt} V_n^{-1}$}.
Set \mbox{$M \hspace{-2pt} = \hspace{-4pt} \bigcap\limits_{n=1}^\infty
\hspace{-3pt} V_n$}. It is easily seen that $M$ is a closed subgroup;
it is compact, because
\mbox{$M \hspace{-2pt} \subseteq \operatorname{cl}_L \hspace{-1pt} V_0$}.
We turn our attention to the coset space 
\mbox{$L\hspace{-0.25pt}/\hspace{-1.25pt}M\hspace{-2pt}$}. It follows 
from the construction that 
\mbox{$M \hspace{-2.5pt} = \hspace{-3.5pt} 
\bigcap\limits_{n=1}^\infty \hspace{-3pt} (V_n M)$,} and so
\mbox{$L\hspace{-0.25pt}/\hspace{-1.25pt}M$} has countable pseudocharacter.
Since~$L$~is locally compact and the canonical projection
\mbox{$\pi\colon L \rightarrow L\hspace{-0.25pt}/\hspace{-1.25pt}M$}
is open, \mbox{$L\hspace{-0.25pt}/\hspace{-1.25pt}M$} is locally compact too
(cf.~\cite[5.22]{HewRos}).
Therefore, \mbox{$L\hspace{-0.25pt}/\hspace{-1.25pt}M$} is 
first-countable, because every locally compact {\em space} of countable 
pseudocharacter is first-countable (cf.~\cite[3.3.4]{Engel6}).
By Theorem~\ref{newHR:thm:coset}, this implies that
\mbox{$L\hspace{-0.25pt}/\hspace{-1.25pt}M$} is metrizable, and its
metric  can be taken to be left invariant (under the action of $L$). 
Finally, 
it follows from property (ii) that 
\mbox{$M \hspace{-2pt}\subseteq \hspace{-2pt} A$}, and so
\mbox{$M\hspace{-2pt} \cap \hspace{-2pt} G \hspace{-2pt} = \hspace{-2pt}\{e\}$}.
Hence, the restriction $\pi_{|G}$ is injective; its image is metrizable 
and homogeneous, because $G$ acts on it continuously (and transitively) 
from the left. This completes the proof, because the topology
of $\pi(G)$ is the desired coarser homogeneous topology 
generated by a left invariant metric.
\end{proof}

We do not know whether every locally precompact group $G$ with countable 
pseudocharacter admits a coarser metrizable group topology. The answer is 
clearly affirmative if the subgroup $M$ constructed in the proof of 
Theorem~\ref{newHR:thm:hommet} is normal in $L$, because then the quotient 
$L/M$ is itself a~metrizable topological group. This is obviously the case 
when $G$ is abelian. The same conclusion can also be achieved by using a 
Kakutani-Kodaira style argument when $G$ is a (not necessarily abelian) 
$\omega\mbox{-}$precompact group. Indeed, in the latter case, by 
Theorems~\ref{real:thm:ip}(b) and~\ref{real:thm:LCLindelof}, the 
completion $L$ of $G$ is locally compact and $\sigma$-compact.
(For the Kakutani-Kodaira theorem, we refer the reader to the second 
edition of~\cite[8.7]{HewRos}.)

\begin{theorem} \label{newHR:thm:general}
Let $G$ be a locally precompact group. The following statements are 
equivalent:

\begin{myromanlist}

\item
$G$ is hereditarily realcompact;

\item
$G$ has countable pseudocharacter, and 
\mbox{$|G|\hspace{-2pt}  < \hspace{-2pt}  \mathfrak{m}$;}

\item
$G$ has countable pseudocharacter, and 
\mbox{$ip(G)\hspace{-2pt} < \hspace{-2pt} \mathfrak{m}$;}

\item
$G$ admits a coarser homogeneous metrizable topology, and 
\mbox{$|G|\hspace{-2pt} < \hspace{-2pt} \mathfrak{m}$;}

\item
$G$ admits a coarser homogeneous metrizable topology, and 
\mbox{$ip(G)\hspace{-2pt} < \hspace{-2pt} \mathfrak{m}$.}

\end{myromanlist}
\end{theorem}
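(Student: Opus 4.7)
The plan is to establish the cycle (i) $\Rightarrow$ (iii) $\Rightarrow$ (ii) $\Rightarrow$ (i); the equivalences (ii) $\Leftrightarrow$ (iv) and (iii) $\Leftrightarrow$ (v) follow immediately from Theorem~\ref{newHR:thm:hommet}, which asserts that for a locally precompact group, countable pseudocharacter is equivalent to admitting a coarser homogeneous metrizable topology. This reduces the task to three implications among (i), (ii), and (iii).

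For (ii) $\Rightarrow$ (i) I would use Theorem~\ref{newHR:thm:hommet} to produce a continuous injection $\pi\colon G \hookrightarrow L/M$ (where $L = \widetilde G$ and $M \cap G = \{e\}$) whose image is metrizable of cardinality $|G| < \mathfrak{m}$. Since metrizable spaces are Dieudonn\'e complete and have cellularity bounded by cardinality, Theorem~\ref{real:thm:cell} gives that $\pi(G)$ is realcompact; Theorem~\ref{newHR:thm:HRpsi} then promotes this to hereditary realcompactness (metrizable spaces have countable pseudocharacter). The topology on $G$ is finer than the one pulled back from $\pi(G)$, so Theorem~\ref{newHR:thm:coarser} yields that $G$ itself is hereditarily realcompact.

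For (i) $\Rightarrow$ (iii), realcompactness of $G$ (as a subspace of itself) combined with local precompactness gives $ip(G) < \mathfrak{m}$ by Theorem~\ref{real:thm:real}. For countable pseudocharacter, I would consider the open subspace $U := G \setminus \{e\}$, which is realcompact by hereditary realcompactness; the case where $\{e\}$ is isolated is trivial, so assume $U$ is dense in $G$, whence $U$ is dense in $\widetilde G$. Since $\widetilde G$ is an $Oz$-space (Theorem~\ref{real:thm:RS}), the dense subspace $U$ is $z$-embedded in $\widetilde G$ by Theorem~\ref{real:thm:Oz}, and Theorem~\ref{real:thm:BlaHag}(b) identifies $\upsilon U$ with the $G_\delta$-closure of $U$ in $\upsilon \widetilde G$. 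Realcompactness of $U$ makes $\upsilon U = U$, so $U$ is $G_\delta$-closed in $\upsilon \widetilde G$, and \emph{a fortiori} in $\widetilde G$. A separating $G_\delta$-set $A$ in $\widetilde G$ with $e \in A$ and $A \cap U = \emptyset$ then gives $\{e\} = A \cap G$, exhibiting $\{e\}$ as a $G_\delta$ in $G$.

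For (iii) $\Rightarrow$ (ii) I again invoke Theorem~\ref{newHR:thm:hommet} to obtain the injection $\pi|_G\colon G \hookrightarrow L/M$ into the locally compact metrizable homogeneous space $L/M$. Since $ip(L) = ip(G) < \mathfrak{m}$ by Theorem~\ref{real:thm:ip}(b) and $L$ is $ip(L)$-precompact, any compact metrizable neighborhood $K$ of $eM$ covers $L/M$ in at most $ip(L)$ translates by elements of $L$; each such translate has cardinality at most $|K| \leq 2^{\aleph_0}$, so $|G| \leq |L/M| \leq ip(G) \cdot 2^{\aleph_0} < \mathfrak{m}$, where the last inequality uses that the first uncountable measurable cardinal $\mathfrak{m}$ is strongly inaccessible. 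This cardinal-arithmetic step is the main technical obstacle: one must treat $L/M$ as a mere homogeneous space (not a group, since $M$ need not be normal) while still extracting a bound on $|G|$ from the purely group-theoretic invariant $ip(G)$, with the strong inaccessibility of $\mathfrak{m}$ essential for closing the estimate.
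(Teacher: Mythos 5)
Your proposal is correct, but it closes the cycle in the opposite direction from the paper and leans on different machinery at two points, so it is worth comparing the routes. The paper runs (i) $\Rightarrow$ (ii) $\Rightarrow$ (iii) $\Rightarrow$ (i), so that the passage between the two cardinal conditions is just the trivial inequality $ip(G)\leq |G|$; you run (i) $\Rightarrow$ (iii) $\Rightarrow$ (ii) $\Rightarrow$ (i), which forces you to recover $|G|<\mathfrak{m}$ from $ip(G)<\mathfrak{m}$ via the covering estimate $|G|\leq |L/M| \leq ip(G)\cdot 2^{\aleph_0}$ together with the fact that the non-Ulam-measurable cardinals are closed under products and under $\lambda\mapsto 2^{\lambda}$ (equivalently, that the first Ulam-measurable cardinal is strongly inaccessible). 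That fact is classical and available in the sources the paper cites, and your covering argument through the metrizable coset space $L/M$ is sound, but the paper never needs any such arithmetic: it extracts $|G|<\mathfrak{m}$ already in (i) $\Rightarrow$ (ii) by observing that the $G_\delta$-topology on $G$ refines the original one, hence is hereditarily realcompact by Theorem~\ref{newHR:thm:coarser}, and is discrete by countable pseudocharacter, so Theorem~\ref{real:thm:discrete} applies directly. For the return to (i), the paper goes (iii) $\Rightarrow$ (i) by noting that $G$, being discrete in its $G_\delta$-topology, is $G_\delta$-closed in $\widetilde G$ (discrete subgroups are closed), whence Theorem~\ref{real:thm:real} gives realcompactness and Theorem~\ref{newHR:thm:HRpsi} upgrades it; your (ii) $\Rightarrow$ (i) instead pushes $G$ into the metrizable image $\pi(G)$ and invokes Shirota's theorem (Theorem~\ref{real:thm:cell}) there before pulling back with Theorem~\ref{newHR:thm:coarser} --- equally valid, and arguably more transparent about where metrizability is used. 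Your extraction of countable pseudocharacter from (i) is essentially the paper's argument (the $Oz$ property, $z$-embedding of the dense complement of a point, and $G_\delta$-closedness of a realcompact $z$-embedded subspace), except that you carry it out inside $\widetilde G$ while the paper works inside $G$ itself via Corollary~\ref{real:cor:LPOz}; and your appeal to Theorem~\ref{real:thm:real} for $ip(G)<\mathfrak{m}$ is a legitimate shortcut the paper does not take. In sum: correct, but the paper's systematic use of the $G_\delta$-topology is what lets it avoid both the inaccessibility of $\mathfrak{m}$ and the covering computation.
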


\begin{proof}
The equivalences (ii)  $\Leftrightarrow$ (iv) and 
(iii) $\Leftrightarrow$ (v)  follow by Theorem~\ref{newHR:thm:hommet}, 
while (ii) $\Rightarrow$ (iii) is clear, because
\mbox{$ip(G) \hspace{-2pt}\leq\hspace{-2pt} |G|$.}

(i) $\Rightarrow$ (ii):
If $G$ is discrete, then clearly it has countable pseudocharacter, and so 
we may assume without loss of generality that $G$ is not discrete. Let
\mbox{$g \hspace{-2pt} \in\hspace{-2pt} G \hspace{-1pt}$,} and put 
\mbox{$X\hspace{-2.75pt}: =\hspace{-1.5pt} 
G\backslash\{g\}\hspace{-1pt}$.}
Since~$G$~is not discrete, $X$ is dense in $G\hspace{-1pt}$. By 
Corollary~\ref{real:cor:LPOz}, $G$ is  an 
$Oz$-space, and thus by Theorem~\ref{real:thm:Oz}, $X$ is 
$z$-embedded in $G\hspace{-1pt}$. Therefore, by 
Theorem~\ref{real:thm:BlaHag}(b), 
$\upsilon X$ is the $G_\delta$-closure of $X$ in $\upsilon 
G\hspace{-1pt}$. 
By (i), both $X$ and $G$ are realcompact, and so $X$ is $G_\delta$-closed 
in $G\hspace{-1pt}$. Hence, 
\mbox{$G\backslash X \hspace{-2.25pt} =\hspace{-2.25pt}\{g\}$}
is $G_\delta$-open. Since every
$G_\delta$-open singleton is a $G_\delta$-set, the group $G$ has countable 
pseudocharacter.

The $G_\delta$-topology on $G$ is finer than the topology of $G$, and 
so by Theorem~\ref{newHR:thm:coarser}, the $G_\delta\mbox{-}$topology on $G$ 
is hereditarily realcompact. On the other hand, since $G$ has countable 
pseudocharacter, the $G_\delta$-topology is discrete on $G\hspace{-1pt}$. 
Therefore, by Theorem~\ref{real:thm:discrete}, 
\mbox{$|G| \hspace{-2pt}<\hspace{-2pt} \mathfrak{m}$}, as desired.

(iii) $\Rightarrow$ (i):  The group $G$ equipped with the 
$G_\delta$-topology is discrete, because it has countable 
pseudocharacter. Thus, $G$ is $G_\delta$-closed in 
$\widetilde G\hspace{-1pt}$, since (in every topological group)
every discrete {\em subgroup} is 
closed (cf.~\cite[1.51]{GLCLTG}). 
Therefore, by Theorem~\ref{real:thm:real},
$G$ is realcompact. Hence, by Theorem~\ref{newHR:thm:HRpsi}, $G$ is 
hereditarily realcompact.
\end{proof}

If $D$ is a discrete space such that 
\mbox{$\omega \hspace{-2pt}<  \hspace{-2pt} |D|
 \hspace{-2pt} < \hspace{-2pt} \mathfrak{m}$}, then 
the Alexandroff
one-point compactification of $D$ is compact, hereditarily 
realcompact (by Theorem~\ref{real:thm:discrete}), but not metrizable.
It follows from Corollary~\ref{newHR:cor:LC}(b) below that no such example 
exists among topological groups. 
Since every locally compact {\em space} of countable pseudocharacter is 
first-countable (cf.~\cite[3.3.4]{Engel6}), every locally compact {\em 
group} of countable pseudocharacter is metrizable
(cf.~\cite[1.23]{GLCLTG}). Thus, Theorem~\ref{newHR:thm:general} has the 
following consequence:

\begin{corollary} \label{newHR:cor:LC}
Let $L$ be a  locally compact  group.  Then: 

\begin{myalphlist}

\item
$L$ is hereditarily realcompact if and only if it is metrizable and  
\mbox{$|L|\hspace{-2pt}<\hspace{-2pt}\mathfrak{m}$}; 

\item
if $L$ is Lindel\"of, then $L$ is hereditarily realcompact if 
and only if it is metrizable.
\qed

\end{myalphlist}
\end{corollary}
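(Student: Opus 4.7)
The plan is to deduce both parts directly from Theorem~\ref{newHR:thm:general}, together with the two facts flagged in the paragraph preceding the corollary: every locally compact space of countable pseudocharacter is first-countable (by \cite[3.3.4]{Engel6}), and hence every locally compact \emph{group} of countable pseudocharacter is metrizable (by \cite[1.23]{GLCLTG}).

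For part (a), the implication $(\Leftarrow)$ is immediate: a metrizable space has countable pseudocharacter, so if $|L|<\mathfrak{m}$ then condition (ii) of Theorem~\ref{newHR:thm:general} is satisfied, whence $L$ is hereditarily realcompact. Conversely, if $L$ is hereditarily realcompact, Theorem~\ref{newHR:thm:general} gives that $L$ has countable pseudocharacter and $|L|<\mathfrak{m}$; by the two facts recalled above, countable pseudocharacter in a locally compact group forces metrizability.

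For part (b), the nontrivial direction is $(\Leftarrow)$. Assume $L$ is Lindel\"of and metrizable. Then $L$ is second-countable (a metrizable Lindel\"of space is separable), so $|L|\leq\mathfrak{c}$. Since the first Ulam-measurable cardinal (if it exists) is measurable and hence strongly inaccessible, it exceeds $\mathfrak{c}$; thus $\mathfrak{c}<\mathfrak{m}$, and in particular $|L|<\mathfrak{m}$. Part (a) now yields that $L$ is hereditarily realcompact. The converse direction $(\Rightarrow)$ is just part (a) applied to the Lindel\"of group $L$.

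There is no serious obstacle: the only point that requires a sentence of justification is the inequality $\mathfrak{c}<\mathfrak{m}$ used in part (b), which rests on the classical fact that Ulam-measurable cardinals are inaccessible. Everything else is bookkeeping with Theorem~\ref{newHR:thm:general} and the cited structural results for locally compact groups.
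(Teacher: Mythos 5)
Your proof is correct. Part (a) is exactly the paper's intended argument: combine the equivalence (i) $\Leftrightarrow$ (ii) of Theorem~\ref{newHR:thm:general} with the observation, recorded in the sentence immediately preceding the corollary, that a locally compact group has countable pseudocharacter if and only if it is metrizable. For part (b), however, you take a genuinely different route from the one the paper sets up. The paper's unproved \textit{q.e.d.} is meant to invoke the equivalence (i) $\Leftrightarrow$ (iii) of Theorem~\ref{newHR:thm:general}: a Lindel\"of locally compact group is $\omega$-precompact by Theorem~\ref{real:thm:LCLindelof}, so $ip(L)=\omega<\mathfrak{m}$ holds automatically and the cardinality hypothesis simply evaporates. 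You instead bound the cardinality: metrizable plus Lindel\"of gives second countability, hence \mbox{$|L|\leq\mathfrak{c}$}, and you then invoke the classical fact that the first Ulam-measurable cardinal is measurable, hence strongly inaccessible, to conclude \mbox{$\mathfrak{c}<\mathfrak{m}$}, after which part (a) applies. This is valid---the paper itself records in Discussion~\ref{real:disc:Ulam} that the first Ulam-measurable cardinal is measurable, and the inaccessibility of measurable cardinals is standard---but it imports a piece of large-cardinal arithmetic that the paper's route avoids entirely, since clause (iii) only needs the trivial fact that $\omega$ is not Ulam-measurable. Both arguments are short; the paper's is marginally more self-contained, while yours has the small advantage of not needing Theorem~\ref{real:thm:LCLindelof} at all.
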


Theorem~\ref{newHR:thm:general} guarantees only the existence of a coarser 
homogeneous metrizable topology, but falls short of providing a coarser 
metrizable group topology.  So far as we are aware, such a group topology 
is available only under some additional assumptions on the algebraic and 
topological structure of the group.

A topological group $G$ is said to be
{\em $\omega$-balanced} if for every \mbox{$U \hspace{-2.5pt}\in  
\hspace{-2pt}\mathcal{N}(G)$,} there is 
\mbox{$\mathcal{V}_U  \hspace{-2pt}\subseteq \hspace{-2pt} 
\mathcal{N}(G)$} such 
that for every \mbox{$x  \hspace{-2pt}\in \hspace{-2pt} G$,} there is 
\mbox{$V  \hspace{-2.5pt}\in  \hspace{-2pt}\mathcal{V}_U$}
that satisfies \mbox{$x^{-1}Vx  \hspace{-2pt}\subseteq 
\hspace{-2pt} U\hspace{-1.5pt}$,} and
\mbox{$|\mathcal V_U|  \hspace{-2pt}\leq 
\hspace{-2pt}\omega\hspace{-1pt}$}
(cf.~\cite[2.7]{GLCLTG}). The class of $\omega$-balanced groups was 
introduced by Kac (under the name of {\em groups with a~quasi-invariant 
basis}), who also proved that a group is $\omega$-balanced 
if and only if it embeds as a topological group into a product of 
metrizable groups (cf.~\cite{Kac} and~\cite[2.18]{GLCLTG}).
(For the sake of correct historical presentation, we note that
questions related to embedding of topological groups into the product of 
groups of a certain class were first studied by Graev~\cite{Graev};
Kac's results were generalized later by Arhangel'ski\v{\i}~\cite{Arh}
and Guran~\cite{Guran}.) Thanks to the following theorem due to Kac, 
$\omega$-balanced groups lend themselves to a more elegant 
characterization of hereditary realcompactness.

\begin{ftheorem}[\cite{Kac}, \pcite{2.19}{GLCLTG}] \label{newHR:thm:Kac}
Let $G$ be an $\omega$-balanced topological group. Then $G$ has countable 
pseudocharacter if and only if $G$ admits a coarser metrizable group 
topology.
\end{ftheorem}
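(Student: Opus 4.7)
Necessity is immediate: a metrizable group topology is first-countable, so in that coarser topology the singleton $\{e\}$ is the intersection of a countable base at $e$; every set that is $G_\delta$ in a coarser topology is $G_\delta$ in any finer topology, so $\{e\}$ is $G_\delta$ in the original topology of $G$. By homogeneity every singleton in $G$ is $G_\delta$, i.e., $G$ has countable pseudocharacter.

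For sufficiency, the plan is to construct a countable family $\mathcal B\subseteq \mathcal N(G)$ that serves as a neighborhood base at $e$ for a Hausdorff group topology coarser than the given one; Birkhoff--Kakutani then supplies a compatible (left invariant) metric. Using countable pseudocharacter, fix $\{U_n:n<\omega\}\subseteq\mathcal N(G)$ with $\bigcap_{n<\omega}U_n=\{e\}$. For each $U\in\mathcal N(G)$, let $\mathcal V_U\subseteq\mathcal N(G)$ be the countable family supplied by $\omega$-balancedness, so that for every $x\in G$ some $V\in\mathcal V_U$ satisfies $x^{-1}Vx\subseteq U$.

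Build $\mathcal B$ by a straightforward bookkeeping recursion. Start with $\mathcal B_0=\{U_n:n<\omega\}$. Given a countable $\mathcal B_k\subseteq\mathcal N(G)$, let $\mathcal B_{k+1}\supseteq\mathcal B_k$ be obtained by adjoining, for each $B\in\mathcal B_k$:
\begin{myromanlist}
\item a symmetric $B'\in\mathcal N(G)$ with $B'B'\subseteq B$ and $B'=(B')^{-1}$;
\item the countable conjugation-witness family $\mathcal V_B$.
\end{myromanlist}
Each $\mathcal B_{k+1}$ remains countable, so $\mathcal B:=\bigcup_{k<\omega}\mathcal B_k$ is countable. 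By construction $\mathcal B$ is a filter base (it is downward directed because one may pass to intersections of finitely many members, which are refined by elements of $\mathcal B$ via the squaring step), is closed up to refinement under the operations $V\mapsto V^{-1}$ and $V\mapsto V'$ with $V'V'\subseteq V$, and satisfies the conjugacy condition: for every $B\in\mathcal B$ and every $x\in G$, some member of $\mathcal V_B\subseteq\mathcal B$ is contained in $xBx^{-1}$. By the standard criterion (see, e.g., \cite[4.5]{HewRos}) $\mathcal B$ is therefore a neighborhood base at $e$ for a (not necessarily Hausdorff) group topology $\tau$ on $G$. Since every member of $\mathcal B$ is a neighborhood of $e$ in the original topology, $\tau$ is coarser than the given topology; since $\mathcal B\supseteq\{U_n\}$, we have $\bigcap\mathcal B\subseteq\bigcap_n U_n=\{e\}$, so $\tau$ is Hausdorff. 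Finally $\tau$ has a countable base at $e$, so by Birkhoff--Kakutani it is metrizable (and admits a left invariant metric).

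The main delicate point is the conjugation step: without $\omega$-balancedness one cannot in general maintain countability of $\mathcal B$ while ensuring that for every $x\in G$ and every basic $B$ some basic $B'$ lies in $xBx^{-1}$, and this condition is exactly what is needed to make $\mathcal B$ a base for a group topology rather than merely a left-translation-invariant one. The $\omega$-balancedness hypothesis was designed precisely to supply, for each $U$, a countable ``pool'' $\mathcal V_U$ of conjugation witnesses, and this is what makes the recursion carry through.
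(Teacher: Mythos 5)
This statement is quoted in the paper as a known external result (it is an \emph{ftheorem} attributed to \cite{Kac} and \pcite{2.19}{GLCLTG}), so there is no internal proof to compare against; judged on its own, your argument is the standard proof of Kac's theorem and is essentially sound. Necessity is correct as written, and for sufficiency the plan --- close the countable family $\{U_n\}$ under symmetric ``square roots'' and under the countable conjugation-witness families $\mathcal{V}_B$ supplied by $\omega$-balancedness, then invoke the neighborhood-base criterion and Birkhoff--Kakutani --- is exactly the right route, and you correctly identify $\omega$-balancedness as the hypothesis that keeps the closure countable. One slip: downward directedness of $\mathcal{B}$ does \emph{not} follow from the squaring step as you assert, since for $B_1,B_2\in\mathcal{B}$ the sets $B_1'$ and $B_2'$ with $B_i'B_i'\subseteq B_i$ need not be contained in $B_1\cap B_2$. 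You must explicitly adjoin at each stage the (countably many) pairwise intersections $B_1\cap B_2$, which lie in $\mathcal{N}(G)$ and preserve countability of the recursion; with that purely bookkeeping correction the proof is complete.
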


\begin{discussion} 
The class of $\omega$-balanced groups contains all abelian groups,
metrizable groups, $\omega\mbox{-}$precompact groups (cf.~\cite[2.27]{GLCLTG}), 
and also the so-called {\em balanced groups} (i.e., groups whose left and 
right uniform structures coincide; cf.~\cite[1.25]{GLCLTG}). 
By Theorem~\ref{newHR:thm:Kac}, if $G$ is an $\omega$-balanced 
locally precompact group, then the conditions

\begin{myromanlist}
\setlength{\itemindent}{25pt}

\item[(iv$'$)]
$G$ admits a coarser metrizable group topology, and  
\mbox{$|G|\hspace{-2pt}<\hspace{-2pt} \mathfrak{m}$}, and

\item[(v$'$)]
$G$ admits a coarser metrizable group topology, and
\mbox{$ip(G)\hspace{-2pt}<\hspace{-2pt} \mathfrak{m}$},

\vspace{6pt}

\end{myromanlist}

\noindent
may be added to the equivalent conditions  listed in 
Theorem~\ref{newHR:thm:general}.
\end{discussion}

\pagebreak[3]

Since every metrizable precompact group has cardinality at most 
$\mathfrak{c}$, Theorem~\ref{newHR:thm:general} can be stated in a simple 
form for precompact groups, and it  implies 
portions of \cite[4.6]{ComfHernTrig} and \cite[3.3]{HernMaca}.

\begin{corollary}
For every precompact group $G$, the following statements are equivalent:

\begin{myromanlist}

\item
$G$ is hereditarily realcompact;

\item
$G$ has countable pseudocharacter;

\item
$G$ admits a coarser metrizable group topology, and 
\mbox{$|G|\hspace{-2.25pt}\leq\hspace{-2pt} \mathfrak{c}$}.
\qed

\end{myromanlist}
\end{corollary}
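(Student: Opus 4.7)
The plan is to reduce the corollary to Theorem~\ref{newHR:thm:general} together with the discussion around Theorem~\ref{newHR:thm:Kac}, exploiting two automatic simplifications that precompactness provides. First, any precompact group $G$ is trivially $\omega$-precompact (a finite cover works with $\tau=\omega$), so $ip(G)=\omega<\mathfrak{m}$. Second, by the discussion following Theorem~\ref{newHR:thm:Kac}, every $\omega$-precompact group is $\omega$-balanced, so Theorem~\ref{newHR:thm:Kac} applies to $G$.

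With these observations, the equivalence (i)~$\Leftrightarrow$~(iii) of Theorem~\ref{newHR:thm:general} specializes: the condition $ip(G)<\mathfrak{m}$ is redundant, leaving exactly ``$G$ has countable pseudocharacter.'' This yields (i)~$\Leftrightarrow$~(ii) of the corollary. Then Theorem~\ref{newHR:thm:Kac} converts (ii) into the condition that $G$ admits a coarser metrizable group topology, which supplies the topological half of (iii).

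It remains to argue that, for a precompact group, the cardinality bound $|G|\le\mathfrak{c}$ in (iii) is automatic (and hence that (iii) follows from (ii), rather than being a strictly stronger statement). If $\tau'\subseteq\tau$ is a coarser metrizable group topology on $G$, then $(G,\tau')$ is still precompact, because any $\tau'$-neighborhood of the identity is also a $\tau$-neighborhood and so is covered by finitely many translates. Hence $(G,\tau')$ is a metrizable precompact group, which, as noted in the paragraph preceding the corollary, has cardinality at most $\mathfrak{c}$. The converse direction (iii)~$\Rightarrow$~(ii) is immediate, since a coarser metrizable topology already makes $\{e\}$ a $G_\delta$, and homogeneity transfers this to every singleton.

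I do not foresee a genuine obstacle here; the work of the corollary is really done by Theorems~\ref{newHR:thm:general} and~\ref{newHR:thm:Kac}. The one point to state carefully is that a coarser group topology on a precompact group remains precompact, so that the ``metrizable precompact $\Rightarrow$ cardinality $\le\mathfrak{c}$'' fact can be invoked and the $|G|\le\mathfrak{c}$ clause absorbed into the coarser-topology condition.
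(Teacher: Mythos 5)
Your proposal is correct and follows exactly the route the paper intends: the corollary is stated with no separate proof precisely because, as you observe, a precompact group has $ip(G)=\omega<\mathfrak{m}$ and is $\omega$-precompact hence $\omega$-balanced, so Theorem~\ref{newHR:thm:general} together with Theorem~\ref{newHR:thm:Kac} (via conditions (iv$'$)/(v$'$) of the Discussion) collapses to the three stated conditions, with the $|G|\leq\mathfrak{c}$ clause absorbed by the remark that metrizable precompact groups have cardinality at most $\mathfrak{c}$. Your explicit verification that a coarser group topology on a precompact group remains precompact is the one small point the paper leaves implicit, and you handle it correctly.
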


\section{Connectedness properties}

\label{sect:conn}

With Theorems~\ref{conn:thm:LC}  and~\ref{conn:thm:0dimqLC} below in mind 
as motivation, we investigate 
in this section the relationship between connectedness properties of
locally precompact (or locally pseudocompact) groups and their 
completions.

\begin{notation}
With each topological group $G$ are associated functorial subgroups 
related to connectedness properties of $G\hspace{-1.5pt}$, defined as 
follows (cf.~\cite[1.1.1]{Dikconcomp}):
\begin{myalphlist}

\item
$G_0$ denotes the connected component of the identity;

\item
$q(G)$ denotes the {\em quasi-component} of the identity, that is, the 
intersection of all clopen sets containing the identity;

\item
\mbox{$o(G) \hspace{-2pt}:= \hspace{-2pt}\bigcap \mathcal{H}(G)$}, the 
intersection of all open subgroups of $G\hspace{-1.5pt}$.

\vskip 1pt

\end{myalphlist}
It is well known and easily seen
that all three of these subgroups are closed and normal
(cf.~\cite[7.1]{HewRos}, \cite[2.2]{DikCOTA}, and~\cite[1.32(b)]{GLCLTG}).
Clearly, \mbox{$G_0\hspace{-2pt}\subseteq \hspace{-2pt}
q(G) \hspace{-2pt} \subseteq \hspace{-2pt}o(G)$}, and for locally compact 
groups, all three are equal:
\end{notation}

\begin{ftheorem}[\pcite{7.8}{HewRos}] \label{conn:thm:LC}
Let $L$ be a locally compact group. Then 
\mbox{$L_0\hspace{-2pt} = \hspace{-2pt}
q(L) \hspace{-2pt} = \hspace{-2pt}o(L)$}.
\end{ftheorem}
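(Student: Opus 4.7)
The plan is to combine the general chain of containments $L_0 \subseteq q(L) \subseteq o(L)$, valid in any topological group, with the reverse inclusion $o(L) \subseteq L_0$ extracted from local compactness. For $L_0 \subseteq q(L)$, I would observe that any clopen set $C$ containing $e$ splits $L$ into the open sets $C$ and $L \setminus C$, forcing the connected set $L_0$ to lie entirely inside $C$; intersecting over all such $C$ gives the inclusion. For $q(L) \subseteq o(L)$, I would invoke the standard fact that every open subgroup $H$ of a topological group is automatically closed (its complement being the union of the remaining cosets, each of which is open) and hence clopen, so $q(L) \subseteq H$ for every $H \in \mathcal{H}(L)$.

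For the non-trivial direction $o(L) \subseteq L_0$, I would pass to the quotient $L/L_0$. Since $L_0$ is a closed normal subgroup, this quotient is locally compact, and by the very definition of the connected component it is also totally disconnected. The key classical ingredient is then van Dantzig's theorem: in a totally disconnected locally compact group, the compact open subgroups form a neighborhood base at the identity. Granting this, for any $x \in L \setminus L_0$, the coset $xL_0$ is a non-identity element of the Hausdorff group $L/L_0$, so there is a compact open subgroup $K$ of $L/L_0$ with $xL_0 \notin K$. Pulling $K$ back through the canonical projection $\pi : L \to L/L_0$ yields an open subgroup $\pi^{-1}(K) \in \mathcal{H}(L)$ that contains $L_0$ but not $x$, so $x \notin o(L)$. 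Together with the first paragraph, this gives the three-way equality.

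The main obstacle is really the invocation of van Dantzig's theorem; once it is in hand, the rest amounts to formal manipulation with cosets and clopen sets. A more self-contained alternative would be to first reduce to the $\sigma$-compact case by replacing $L$ with the open subgroup $\langle U \rangle$ generated by a compact symmetric neighborhood $U$ of $e$, and then to construct the required neighborhood base of compact open subgroups inside the totally disconnected quotient directly, via a standard shrinking argument on a descending sequence of symmetric compact neighborhoods of the identity.
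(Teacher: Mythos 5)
The paper states this result as a cited classical fact (\cite[7.8]{HewRos}) and supplies no proof of its own; your argument is correct and is essentially the standard one from that source, namely the easy chain $L_0\subseteq q(L)\subseteq o(L)$ together with the reduction of $o(L)\subseteq L_0$ to van Dantzig's theorem applied to the totally disconnected locally compact quotient $L/L_0$. The one phrase worth tightening is that $L/L_0$ is totally disconnected ``by the very definition of the connected component'': this is not definitional but follows from the standard fact that a topological group with a connected normal subgroup and connected quotient is itself connected, applied to the preimage of the identity component of $L/L_0$.
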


Following many authors, we say that a space is {\itshape zero-dimensional} 
if it has a base consisting of {\itshape clopen} (open-and-closed)  sets. 
It is clear that a zero-dimensional (Hausdorff) group $G$ satisfies 
\mbox{$q(G)\hspace{-2pt} = \hspace{-2pt} \{e\}$}.

\begin{ftheorem}[{\cite[3.5, 7.13]{HewRos}}] \label{conn:thm:0dimqLC}
Let $L$ be a locally compact group, and $N$ a closed normal subgroup. Then 
the following statements are equivalent:

\begin{myromanlist}

\item
$L/N$ is zero-dimensional;

\item
\mbox{$(L/N)_0 \hspace{-2pt} = \hspace{-2pt} \{N\}$};

\item
\mbox{$L_0 \hspace{-2pt} \subseteq \hspace{-2pt} N \hspace{-2pt}$}.

\end{myromanlist}
\end{ftheorem}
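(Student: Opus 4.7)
The plan is to establish the cycle (i) $\Rightarrow$ (ii) $\Rightarrow$ (iii) $\Rightarrow$ (i), leveraging the fact that both $L/N$ and $L/L_0$ are locally compact (as quotients of a locally compact group by a closed normal subgroup), so Theorem~\ref{conn:thm:LC} applies to each.

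For (i) $\Rightarrow$ (ii), I would use that a zero-dimensional (Hausdorff) space has trivial quasi-components at every point, so $q(L/N) = \{N\}$. Theorem~\ref{conn:thm:LC} applied to the locally compact group $L/N$ then gives $(L/N)_0 = q(L/N) = \{N\}$.

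For (ii) $\Rightarrow$ (iii), let $\pi \colon L \to L/N$ denote the canonical projection. Since $\pi$ is continuous and $L_0$ is connected and contains $e$, the image $\pi(L_0)$ is a connected subset of $L/N$ containing $\pi(e) = N$. Hence $\pi(L_0) \subseteq (L/N)_0 = \{N\}$, which forces $L_0 \subseteq \pi^{-1}(N) = N$.

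For (iii) $\Rightarrow$ (i), which is the nontrivial direction, my plan is to factor through $L/L_0$. Since $L_0 \subseteq N$, there is a well-defined continuous open surjection $L/L_0 \to L/N$. First, I would show that $L/L_0$ is totally disconnected: by Theorem~\ref{conn:thm:LC}, $L_0 = o(L) = \bigcap \mathcal{H}(L)$, so $L_0$ is contained in every open subgroup of $L$. This yields an order-preserving bijection $\mathcal{H}(L) \leftrightarrow \mathcal{H}(L/L_0)$ via $H \mapsto H/L_0$, and hence
$$o(L/L_0) = \Bigl(\bigcap\nolimits_{H \in \mathcal{H}(L)} H\Bigr)/L_0 = L_0/L_0 = \{e\}.$$
Applying Theorem~\ref{conn:thm:LC} to the locally compact group $L/L_0$ gives $(L/L_0)_0 = o(L/L_0) = \{e\}$. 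By van Dantzig's theorem, the locally compact totally disconnected group $L/L_0$ has a neighborhood base at the identity consisting of compact open (hence clopen) subgroups. Projecting this base through the open quotient map $L/L_0 \to L/N$ produces a neighborhood base at $N$ in $L/N$ consisting of compact open subgroups, and homogeneity of topological groups extends this to a clopen base throughout $L/N$.

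The main obstacle is the step (iii) $\Rightarrow$ (i): Theorem~\ref{conn:thm:LC} alone identifies $(L/N)_0$ with $o(L/N)$ but does not directly produce a clopen neighborhood base, so one must additionally invoke van Dantzig's theorem to promote trivial connectedness of $L/L_0$ to zero-dimensionality of $L/N$. This structural input is implicit in the cited references \cite[3.5, 7.13]{HewRos}, and without it one cannot distinguish ``totally disconnected'' from ``zero-dimensional'' purely within the language of quasi-components and open subgroups.
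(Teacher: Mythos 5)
The paper does not prove this statement; it is quoted as a known fact with a citation to Hewitt--Ross \cite[3.5, 7.13]{HewRos}, so there is no internal proof to compare against. Your argument is correct and is essentially the standard one underlying those references: the cycle (i)~$\Rightarrow$~(ii)~$\Rightarrow$~(iii) is routine, and for (iii)~$\Rightarrow$~(i) you correctly identify that Theorem~\ref{conn:thm:LC} alone only yields total disconnectedness of $L/L_0$, so van Dantzig's theorem (a compact open subgroup base for totally disconnected locally compact groups) is the indispensable extra input, after which pushing that base through the open surjection $L/L_0\to L/N$ and translating gives zero-dimensionality of $L/N$.
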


One may wonder whether the conclusions of  Theorems~\ref{conn:thm:LC} 
and~\ref{conn:thm:0dimqLC} hold for locally precompact groups. 
Examples~\ref{ex:conn:long}(a)-(e) provide a negative 
answer to this question.
Moreover, as Example~\ref{ex:conn:long}(d) and Theorem~\ref{LPAq:thm:main} 
indicate,  the relation \mbox{$G_0\hspace{-2pt} = \hspace{-2pt} q(G)$} 
fails for some pseudocompact abelian groups. When $G$ and 
$H$ are topological groups, we use the symbol 
\mbox{$G \hspace{-2pt} \cong \hspace{-2pt} H$} to indicate that $G$ and 
$H$ are topologically isomorphic, that is,  there is  a bijection from $G$ 
onto $H$ that is simultaneously an algebraic isomorphism and a 
topological homeomorphism.

\begin{examples} \mbox{ } \label{ex:conn:long}

\begin{myalphlist}

\item
Comfort and van Mill showed that there exists a pseudocompact abelian 
group $G$ such that \mbox{$G_0 \hspace{-2pt}=\hspace{-1.5pt}
q(G)\hspace{-2pt}=\hspace{-2pt}\{0\}$}, but $G$ is 
not zero-dimensional (cf.~\cite[7.7]{ComfvMill2}). 
Thus, Theorem~\ref{conn:thm:0dimqLC} fails~not only for locally precompact 
\pagebreak[3]
groups (or locally pseudocompact ones), but even for pseudocompact groups.
Nevertheless, it is possible to characterize zero-dimensional quotients of 
locally pseudocompact groups (Theorem~\ref{conn:thm:lcps0dimq}).

\item
Ursul showed that there is a subgroup
$G$ of the group $\mathbb{R}^2$ in its usual topology such that
\mbox{$G_0\hspace{-2pt} = \hspace{-2pt} \{0\}$} and
\mbox{$q(G)\hspace{-2pt} \cong \hspace{-2pt} \mathbb{Z}$} 
(cf.~\cite{UrsulExpl}). Thus, the equality
\mbox{$G_0\hspace{-2pt} = \hspace{-2pt} q(G)$} in 
Theorem~\ref{conn:thm:LC} fails for the (locally precompact) group $G$.

\item
Put \mbox{$G\hspace{-2pt} = \hspace{-1.5pt} \mathbb{Q}/\mathbb{Z}$}. It is 
obvious (and also follows from Theorem~\ref{conn:thm:3stat} below) that $G$ 
has no proper open subgroups, and so 
\mbox{$o(G) \hspace{-2pt} = \hspace{-1.5pt} G\hspace{-1pt}$}. On the other 
hand, like every Tychonoff space of cardinality less than continuum, 
$G$ is zero-dimensional 
(the argument given in \cite[6.2.8]{Engel6} suffices to show this); in 
particular, 
\mbox{$q(G) \hspace{-2pt} = \hspace{-2pt} \{0\}\hspace{-1pt}$}. Therefore, 
the equality \mbox{$q(G)\hspace{-2pt} = \hspace{-2pt} o(G)$}
in Theorem~\ref{conn:thm:LC} fails for the (precompact) group $G$.

\item
By Theorem~\ref{LPAq:thm:main}(b) below, there is a pseudocompact 
abelian 
group $G$ such that 
\mbox{$q(G)\hspace{-2pt}\cong\hspace{-2pt} \mathbb{Z}/2\mathbb{Z}$}. 
Since \mbox{$G_0 \hspace{-2pt} \subseteq \hspace{-2pt} q(G)$} and
$q(G)$ is discrete, it follows that 
\mbox{$G_0\hspace{-1.5pt} =\hspace{-2pt} \{0\}\hspace{-1pt}$}.
Thus, the equality \mbox{$G_0 \hspace{-2pt} = \hspace{-2pt} q(G)$} in 
Theorem~\ref{conn:thm:LC} fails even for some pseudocompact abelian 
groups.

\item
The iterated quasi-components $q(G), q(q(G)),\ldots,q_\alpha(G)$ 
of a topological group $G$ define a descending chain of normal 
subgroups of $G\hspace{-1pt}$ indexed by ordinals. Dikranjan showed that 
for every ordinal $\alpha$, there is a pseudocompact abelian group 
$H$ such that 
\mbox{$H_0 \hspace{-1.6pt} = \hspace{-1.5pt} q_\alpha(H)$}, but
\mbox{$H_0 \hspace{-1.6pt} \subsetneq \hspace{-1.5pt} q_\beta(H)$} for 
every \mbox{$\beta \hspace{-2pt} < \hspace{-2pt} \alpha$}
(cf.~\cite[Theorem~11]{Dikdimpsc} and~\cite[1.4.10]{Dikconcomp}).
Dikranjan's construction~is~the most striking illustration known to the 
authors of how big the gap between $G_0$ and $q(G)$ can be.

\end{myalphlist}
\end{examples}

We show now that the equality \mbox{$q(G)\hspace{-2pt} = \hspace{-2pt} 
o(G)$} in Theorem~\ref{conn:thm:LC} does hold for locally pseudocompact 
groups. The following theorem generalizes \cite[1.4]{Dikconpsc}, which 
treats the same property in the case of pseudocompact groups.

\begin{theorem} \label{conn:thm:qqoo}
Let $G$ be a locally pseudocompact group. Then:

\begin{myalphlist}

\item
\mbox{$q(G)\hspace{-2.1pt}=\hspace{-1.9pt} 
q(\widetilde G) \hspace{-2.1pt} \cap\hspace{-2pt} G$};

\item
\mbox{$q(G)\hspace{-2.1pt}=\hspace{-1.9pt} 
(\widetilde G)_0 \hspace{-2.1pt} \cap\hspace{-2pt} G$};

\item
\mbox{$q(G)\hspace{-2.1pt}=\hspace{-1.9pt} o(G)$}.

\end{myalphlist}
\end{theorem}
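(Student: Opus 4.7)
The three assertions will be linked as follows: (c) is the essential statement, and once it is established, (a) and (b) follow quickly. The plan is therefore to concentrate on (c), and then deduce (a) from (c) by combining $o(G)=o(\widetilde{G})\cap G$ (Lemma~\ref{prel:lemma:osubgrp}(d)) with $o(\widetilde{G})=q(\widetilde{G})$ (Theorem~\ref{conn:thm:LC}); part (b) then follows from (a) together with $q(\widetilde{G})=(\widetilde{G})_0$ (again Theorem~\ref{conn:thm:LC}).

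For (c), the inclusion $q(G)\subseteq o(G)$ is automatic, since every open subgroup is clopen. For the reverse, I fix a clopen set $C\subseteq G$ with $e\in C$ and aim to prove that $\overline{C}:=\operatorname{cl}_{\widetilde{G}}C$ is clopen in $\widetilde{G}$. Granting this, Theorem~\ref{conn:thm:LC} applied to the locally compact group $\widetilde{G}$ gives $o(\widetilde{G})\subseteq\overline{C}$, whence
\begin{align*}
o(G)=o(\widetilde{G})\cap G\subseteq\overline{C}\cap G=C,
\end{align*}
the final equality using that $C$ is closed in $G$. As $C$ ranges over all clopen neighborhoods of $e$, this yields $o(G)\subseteq q(G)$.

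The hard part, and the main obstacle, is to show that $\overline{C}$ is open in $\widetilde{G}$; closedness is free. The plan is to exploit local pseudocompactness through the identity $\beta F=K$ furnished by Theorem~\ref{prel:thm:lcps}(iv). Given $p\in V\cap\overline{C}$ for some precompact $V\in\mathcal{N}(\widetilde{G})$, set $K:=\operatorname{cl}_{\widetilde{G}}V$, $U:=V\cap G$, and $F:=\operatorname{cl}_G U$; then $\beta F=K$. Since $C$ is clopen in $G$ and $F\subseteq G$, the trace $C\cap F$ is clopen in $F$, so its characteristic function extends continuously to a $\{0,1\}$-valued function on $\beta F=K$, showing that $\operatorname{cl}_K(C\cap F)$ is clopen in $K$. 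The inclusions $C\cap V=C\cap U\subseteq C\cap F$ force every $V$-neighborhood of $p$ meeting $C$ to meet $C\cap F$, so $p\in\operatorname{cl}_K(C\cap F)$. Writing $\operatorname{cl}_K(C\cap F)=K\cap W$ for some open $W\subseteq\widetilde{G}$, the set $V\cap W$ is an open $\widetilde{G}$-neighborhood of $p$ contained in $\overline{C}$. Hence $\overline{C}$ is open, which completes the proof of (c) and, via the reductions above, of (a) and (b).
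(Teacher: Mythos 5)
Your route is genuinely different from the paper's. The paper proves (a) first and derives (b) and (c) from it: it quotes the fact that for any Tychonoff space $X$ the quasi-component of a point is the trace on $X$ of its quasi-component in $\beta X$, and combines this with the identity $\beta G=\beta\widetilde{G}$ from Theorem~\ref{prel:thm:lcps}(v); parts (b) and (c) then follow from Theorem~\ref{conn:thm:LC} and Lemma~\ref{prel:lemma:osubgrp}(d) exactly as in your reduction, just run in the opposite direction. You instead make (c) the workhorse and reprove, by hand, the one piece of the quasi-component-versus-compactification machinery you actually need --- that the closure in $\widetilde{G}$ of a clopen subset of $G$ is clopen --- via the local identity $\beta(\operatorname{cl}_G U)=\operatorname{cl}_{\widetilde{G}}U$ and the extension of $\{0,1\}$-valued functions to $\beta F$. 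This buys self-containedness (no appeal to the cited result on quasi-components in $\beta X$) at the cost of more work; the paper's argument is shorter but leans on an external reference. Your deductions of (a) and (b) from (c), and the inclusion $q(G)\subseteq o(G)$, are all correct.

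There is one spot to patch. You take $V\in\mathcal{N}(\widetilde{G})$, i.e.\ a precompact neighborhood of the \emph{identity}, and show that every $p\in V\cap\overline{C}$ is interior to $\overline{C}$; but to conclude that $\overline{C}$ is open you must handle \emph{every} $p\in\overline{C}$, and a general such $p$ does not lie in $V$. (If instead you meant $V$ to be a precompact neighborhood of $p$, then $U=V\cap G$ is no longer in $\mathcal{N}(G)$ and Theorem~\ref{prel:thm:lcps}(iv) does not literally apply to it.) The fix is routine: take $V$ symmetric, use density of $G$ to pick $g\in Vp\cap G$, so that $pg^{-1}\in V$, and run your argument with the clopen set $Cg^{-1}$ and the point $pg^{-1}\in V\cap\operatorname{cl}_{\widetilde{G}}(Cg^{-1})$; right translation by $g\in G$ is a homeomorphism of $\widetilde{G}$ carrying $G$ to $G$, so the resulting interior neighborhood translates back to one of $p$ inside $\overline{C}$. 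Your openness argument itself nowhere uses $e\in C$, so it applies verbatim to $Cg^{-1}$. With that translation step supplied, the proof is complete.
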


\begin{proof}
(a) For every Tychonoff space  $X\hspace{-1.5pt}$,
the quasi-component of \mbox{$x \hspace{-2pt}\in\hspace{-2pt} X$}
is equal to the trace~on~$X$ of the quasi-component of $x$ in 
$\beta X \hspace{-2pt}$ (cf.~\cite[2.1]{DikCLP}).
By the implication (i) $\Rightarrow$ (v) of 
Theorem~\ref{prel:thm:lcps},~one has
\mbox{$\beta G \hspace{-2pt} = 
\hspace{-2pt} \beta \widetilde G\hspace{-1pt}$}, and the statement 
follows.

(b) As $\widetilde G$ is locally compact, by Theorem~\ref{conn:thm:LC},
\mbox{$q(\widetilde G) \hspace{-2.1pt} = \hspace{-1.9pt} 
(\widetilde G)_0$}. Thus, the statement follows by~(a).

(c) Since $\widetilde G$ is locally compact, one has
\mbox{$q(\widetilde G) \hspace{-2.1pt} = \hspace{-1.9pt} o(\widetilde G)$}
by  Theorem~\ref{conn:thm:LC}.
By  (a) and Lemma~\ref{prel:lemma:osubgrp}(d),
\begin{align}
q(G) = q(\widetilde G) \cap G = o(\widetilde G) \cap G = o(G),
\end{align}
as desired.
\end{proof}

\begin{theorem} \label{conn:thm:3stat}
Let $G$ be a locally precompact group, and consider the following statements:

\begin{myromanlist}

\item
$G$ is connected;

\item
$G$ has no proper open subgroups;

\item
$\widetilde G$ is connected.

\vskip 1pt
\end{myromanlist}
Then {\rm (i)} $\Rightarrow$ {\rm (ii)} $\Leftrightarrow$ {\rm (iii)}.
Furthermore, if $G$ is locally pseudocompact, then 
all three conditions are equivalent.
\end{theorem}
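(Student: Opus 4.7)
The plan is to prove the implications in the cyclic order (i) $\Rightarrow$ (ii) $\Rightarrow$ (iii), together with (iii) $\Rightarrow$ (ii) for the first part, and then add (iii) $\Rightarrow$ (i) under the extra hypothesis.

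For (i) $\Rightarrow$ (ii): every open subgroup $H$ of a topological group is also closed (its complement is a union of open cosets); so if $G$ is connected and $H$ is an open subgroup, then $H$ is clopen and nonempty, whence $H = G$. This needs no precompactness hypothesis.

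For (ii) $\Leftrightarrow$ (iii): the completion $\widetilde G$ is locally compact (by the Weil theorem recalled in \S\ref{sect:intro}), so by Theorem~\ref{conn:thm:LC} one has $(\widetilde G)_0 = o(\widetilde G)$. Hence $\widetilde G$ is connected iff $o(\widetilde G) = \widetilde G$ iff $\mathcal{H}(\widetilde G) = \{\widetilde G\}$. Now apply Lemma~\ref{prel:lemma:osubgrp}(a): the map $M \mapsto M \cap G$ is an order-preserving bijection from $\mathcal{H}(\widetilde G)$ onto $\mathcal{H}(G)$, so $\mathcal{H}(\widetilde G) = \{\widetilde G\}$ iff $\mathcal{H}(G) = \{G\}$, which is precisely (ii). (Equivalently, one may invoke Lemma~\ref{prel:lemma:osubgrp}(d): $o(G) = o(\widetilde G) \cap G$; since $o(\widetilde G)$ is closed and $G$ is dense in $\widetilde G$, the equality $o(G) = G$ is equivalent to $o(\widetilde G) = \widetilde G$.)

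For the final implication (iii) $\Rightarrow$ (i) under local pseudocompactness, the key input is Theorem~\ref{conn:thm:qqoo}(b), which gives $q(G) = (\widetilde G)_0 \cap G$. If $\widetilde G$ is connected, then $(\widetilde G)_0 = \widetilde G$, and so $q(G) = G$. It then remains to recall the general topological fact that a space $X$ whose quasi-component $q(x)$ equals $X$ for some $x \in X$ must be connected: any nonempty clopen subset of $X$ either contains $x$ (and hence contains $q(x) = X$), or has clopen complement containing $x$ (forcing the complement to be $X$). Thus $G$ is connected.

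The genuine obstacle is the last step: it is exactly here that local pseudocompactness (rather than mere local precompactness) is required, since Example~\ref{ex:conn:long}(b) shows (iii) $\Rightarrow$ (i) fails for general locally precompact $G$. The whole argument ultimately reduces to Theorem~\ref{conn:thm:qqoo}(b), which in turn rests on the identification $\beta G = \beta \widetilde G$ from Theorem~\ref{prel:thm:lcps}; this identification is the source of the pseudocompactness hypothesis.
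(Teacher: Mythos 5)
Your argument is correct, and the first two parts ((i) $\Rightarrow$ (ii) and (ii) $\Leftrightarrow$ (iii)) coincide with the paper's proof: both rest on Theorem~\ref{conn:thm:LC} applied to the locally compact group $\widetilde G$ together with the open-subgroup correspondence of Lemma~\ref{prel:lemma:osubgrp} (the paper uses part (d), you use part (a) and mention (d) as an alternative --- these are interchangeable here). Where you diverge is the final implication under local pseudocompactness. The paper proves (i) $\Leftrightarrow$ (iii) directly: from $\beta G = \beta\widetilde G$ (Theorem~\ref{prel:thm:lcps}, (i) $\Rightarrow$ (v)) and the fact that a Tychonoff space is connected iff its Stone--\v{C}ech compactification is connected (Theorem~\ref{conn:thm:betaX}). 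You instead route through Theorem~\ref{conn:thm:qqoo}(b), deducing $q(G) = (\widetilde G)_0 \cap G = G$ and then invoking the elementary fact that a space equal to the quasi-component of one of its points is connected (your verification of that fact is sound). Since Theorem~\ref{conn:thm:qqoo} precedes Theorem~\ref{conn:thm:3stat} in the paper and does not depend on it, there is no circularity; and since Theorem~\ref{conn:thm:qqoo}(a) is itself proved from $\beta G = \beta\widetilde G$ via the identity $q(G)=q(\beta G)\cap G$, both arguments ultimately draw on the same source, exactly as you observe in your closing remark. Your route buys a small economy by reusing an already-established result of the same section; the paper's route is marginally more self-contained in that it needs only the classical connectedness criterion for $\beta X$ rather than the quasi-component machinery.
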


In order to prove Theorem~\ref{conn:thm:3stat}, we rely on a well-known 
relationship between the connectedness of a space and its Stone-\v{C}ech 
compactification:

\pagebreak[3]

\begin{ftheorem}[\pcite{6L.1}{GilJer}] \label{conn:thm:betaX}
A Tychonoff space $X$ is  connected if and only if $\beta X$ is 
connected.
\end{ftheorem}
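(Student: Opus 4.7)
The plan is to prove each implication separately, using in an essential way that $X$ is dense in $\beta X$ and that $\beta X$ is the Stone--\v{C}ech compactification (so continuous maps from $X$ to compact Hausdorff spaces extend).

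For the forward direction, suppose $X$ is connected, and suppose toward a contradiction that $\beta X = U \cup V$ with $U$, $V$ disjoint, non-empty, open subsets of $\beta X$. Since $X$ is dense in $\beta X$ and $U$, $V$ are both non-empty and open, both $X \cap U$ and $X \cap V$ are non-empty. They are disjoint and open in $X$, and their union is $X$, which contradicts the connectedness of $X$. Hence $\beta X$ is connected. (Equivalently, one may observe that the closure in $\beta X$ of the connected subspace $X$ is connected, and this closure is all of $\beta X$.)

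For the reverse direction, I will prove the contrapositive. Suppose $X$ is disconnected, and write $X = A \cup B$ with $A$, $B$ disjoint, non-empty, and clopen in $X$. Define $f \colon X \to \{0,1\}$ by $f(A) = \{0\}$ and $f(B) = \{1\}$. Then $f$ is continuous (the preimage of every subset of the discrete space $\{0,1\}$ is clopen in $X$), and $\{0,1\}$ is compact Hausdorff. By the universal property of the Stone--\v{C}ech compactification, $f$ extends to a continuous map $\widetilde{f} \colon \beta X \to \{0,1\}$. Because $A$ and $B$ are both non-empty, $\widetilde{f}$ attains both values $0$ and $1$, so $\widetilde{f}^{-1}(0)$ and $\widetilde{f}^{-1}(1)$ form a disconnection of $\beta X$. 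Therefore $\beta X$ is disconnected.

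There is no real obstacle here: both directions are short and rest on standard properties of $\beta X$ (density of $X$ and the extension property for continuous maps into compact Hausdorff spaces). The only subtlety worth flagging is the appeal to $X$ being Tychonoff, which is what guarantees that $\beta X$ exists with $X$ dense and has the required universal property; this hypothesis is in force throughout the paper (see \S\ref{sect:prel}).
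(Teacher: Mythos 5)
Your proof is correct: the forward direction is just the fact that $\beta X$ is the closure of the connected dense subspace $X$, and the reverse direction correctly uses the extension property of $\beta X$ applied to the characteristic function of a clopen partition. The paper itself offers no proof of this statement---it is quoted as a known fact from Gillman--Jerison [6L.1]---and your argument is the standard one for that exercise, so there is nothing to reconcile.
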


\begin{proof}[Proof of Theorem~\ref{conn:thm:3stat}.]
The implication (i) $\Rightarrow$ (ii) is obvious, because
every open subgroup is closed.

(ii) $\Rightarrow$ (iii): If $G$ has no proper open subgroups, then
\mbox{$o(\hspace{-0.6pt} G)\hspace{-2pt} = \hspace{-1.5pt} G\hspace{-1pt}$}. 
Consequently, 
\mbox{$G\hspace{-2pt}\subseteq\hspace{-2pt}o(\hspace{-0.6pt}\widetilde G)$}
by Lemma~\ref{prel:lemma:osubgrp}(d), and so 
\mbox{$G\hspace{-2pt}\subseteq\hspace{-2pt}(\widetilde G)_0$}
by Theorem~\ref{conn:thm:LC}.
Since $G$ is dense in $\widetilde G$ and $(\widetilde G)_0$ is a closed 
subgroup, this implies that 
\mbox{$(\widetilde G)_0 \hspace{-2pt} = 
\hspace{-1.5pt} \widetilde G \hspace{-1pt}$}.

(iii) $\Rightarrow$ (ii): If $\widetilde G$ is connected, then by
Theorem~\ref{conn:thm:LC}, 
\mbox{$o(\hspace{-0.6pt}\widetilde G) \hspace{-2.3pt} = 
\hspace{-1.5pt} \widetilde G \hspace{-1pt}$}, and so by 
Lemma~\ref{prel:lemma:osubgrp}(d), 
\mbox{$o(\hspace{-0.6pt}G)\hspace{-2.15pt} = \hspace{-1.5pt} 
G\hspace{-1pt}$}.

(i) $\Leftrightarrow$ (iii): If $G$ is locally pseudocompact,
then by the implication (i) $\Rightarrow$ (v) of
Theorem~\ref{prel:thm:lcps}, one has
\mbox{$\beta G \hspace{-2pt} =
\hspace{-2pt} \beta \widetilde G\hspace{-1pt}$}, and thus
the statement follows by Theorem~\ref{conn:thm:betaX}.
\end{proof}

Connectedness is not the only property that holds for a locally 
pseudocompact group if and only if it holds for its completion. The
same is true for the other extreme, namely, zero-dimensionality.

\begin{theorem} \label{conn:thm:0dim-compl}
Let $G$ be a locally pseudocompact group. Then 
$G$ is zero-dimensional if and only if 
$\widetilde G$ is zero-dimensional.
\end{theorem}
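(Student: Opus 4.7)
The plan is to prove only the non-trivial direction (from $G$ zero-dimensional to $\widetilde{G}$ zero-dimensional); the converse is immediate, since zero-dimensionality is inherited by (arbitrary) subspaces. By homogeneity of $\widetilde{G}$, it suffices to show that every neighborhood of the identity in $\widetilde{G}$ contains a clopen neighborhood of the identity. In fact, since $\widetilde{G}$ is locally compact, I will produce such a clopen set that is also compact.

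First I would use local compactness and regularity of $\widetilde{G}$ to shrink a given neighborhood $W$ of $e$ to an open subset $W_1$ with \mbox{$e\hspace{-2pt}\in\hspace{-2pt} W_1$}, \mbox{$\operatorname{cl}_{\widetilde{G}} W_1\hspace{-2pt}\subseteq\hspace{-2pt} W\hspace{-1pt}$}, and \mbox{$\operatorname{cl}_{\widetilde{G}} W_1$} compact. By the zero-dimensionality of $G$, pick a clopen subset $U$ of $G$ with \mbox{$e \hspace{-2pt}\in\hspace{-2pt} U \hspace{-2pt}\subseteq\hspace{-2pt} W_1 \hspace{-2pt}\cap\hspace{-2pt} G$}, and set \mbox{$V\hspace{-2pt} :=\hspace{-2pt} G \hspace{-1pt}\setminus\hspace{-1pt} U$}. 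Choose open subsets $U',V'$ of $\widetilde{G}$ with \mbox{$U'\hspace{-2pt}\cap\hspace{-2pt} G\hspace{-2pt} =\hspace{-2pt} U$} and \mbox{$V'\hspace{-2pt}\cap\hspace{-2pt} G\hspace{-2pt} =\hspace{-2pt} V\hspace{-1pt}$}. Since $G$ is dense in $\widetilde{G}$ and \mbox{$U\hspace{-2pt}\cap\hspace{-2pt} V \hspace{-2pt}=\hspace{-2pt}\emptyset$}, one has \mbox{$U'\hspace{-2pt}\cap\hspace{-2pt} V'\hspace{-2pt}=\hspace{-2pt}\emptyset$}, and because \mbox{$G\hspace{-2pt}\subseteq\hspace{-2pt} U'\hspace{-1pt}\cup\hspace{-1pt} V'$} is dense, \mbox{$\operatorname{cl}_{\widetilde{G}}(U'\hspace{-1pt}\cup\hspace{-1pt} V')\hspace{-2pt} =\hspace{-2pt}\widetilde{G}$}. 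Writing \mbox{$K_1 \hspace{-2pt}:=\hspace{-2pt} \operatorname{cl}_{\widetilde{G}} U \hspace{-2pt}=\hspace{-2pt}\operatorname{cl}_{\widetilde{G}} U'$} and \mbox{$K_2 \hspace{-2pt}:=\hspace{-2pt} \operatorname{cl}_{\widetilde{G}} V \hspace{-2pt}=\hspace{-2pt}\operatorname{cl}_{\widetilde{G}} V'$}, this gives \mbox{$K_1 \hspace{-1pt}\cup\hspace{-1pt} K_2 \hspace{-2pt}=\hspace{-2pt} \widetilde{G}$}.

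The heart of the argument is to show \mbox{$K_1\hspace{-2pt}\cap\hspace{-2pt} K_2 \hspace{-2pt}=\hspace{-2pt}\emptyset$}; once this is done, $K_1$ is clopen in $\widetilde{G}$ and compact (as a closed subset of \mbox{$\operatorname{cl}_{\widetilde{G}} W_1$}), and \mbox{$K_1 \hspace{-2pt}\subseteq\hspace{-2pt} \operatorname{cl}_{\widetilde{G}} W_1 \hspace{-2pt}\subseteq\hspace{-2pt} W$} yields the desired compact clopen neighborhood of $e$ inside $W$. To establish disjointness, I would invoke the $Oz$-space property of $\widetilde{G}$: by Theorem~\ref{real:thm:RS}, $\widetilde{G}$ is an $Oz$-space, and so by Theorem~\ref{real:thm:Oz}, the regular-closed sets $K_1$ and $K_2$ are zero-sets in $\widetilde{G}$; in particular, \mbox{$K_1 \hspace{-2pt}\cap\hspace{-2pt} K_2$} is a $G_\delta$-subset of $\widetilde{G}$. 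Since $U$ and $V$ are closed in $G$, we have \mbox{$K_1 \hspace{-2pt}\cap\hspace{-2pt} G \hspace{-2pt}=\hspace{-2pt} U$} and \mbox{$K_2\hspace{-2pt}\cap\hspace{-2pt} G \hspace{-2pt}=\hspace{-2pt} V\hspace{-1pt}$}, whence \mbox{$K_1 \hspace{-2pt}\cap\hspace{-2pt} K_2 \hspace{-2pt}\cap\hspace{-2pt} G \hspace{-2pt}=\hspace{-2pt} \emptyset$}. Because $G$ is locally pseudocompact, it is $G_\delta$-dense in $\widetilde{G}$ by the implication (i) $\Rightarrow$ (vii) of Theorem~\ref{prel:thm:lcps}, forcing \mbox{$K_1\hspace{-2pt}\cap\hspace{-2pt} K_2 \hspace{-2pt}=\hspace{-2pt}\emptyset$}.

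The main obstacle, as anticipated, is precisely upgrading the fact that \mbox{$K_1\hspace{-2pt}\cap\hspace{-2pt} K_2$} is disjoint from $G$ to the statement that it is empty in $\widetilde{G}$. The $Oz$-space property of $\widetilde{G}$ is what converts the topologically tame closed sets $K_1,K_2$ into $G_\delta$-sets, while local pseudocompactness of $G$ (in the guise of $G_\delta$-density) provides the final lever. One expects that the mere local precompactness of $G$ would not suffice here, paralleling the fact that the corresponding direction of Theorem~\ref{conn:thm:3stat} fails without local pseudocompactness.
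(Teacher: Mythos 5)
Your proof is correct, but it takes a genuinely different route from the paper's. The paper argues through the Stone--\v{C}ech compactification: since $G$ is zero-dimensional it has a clopen base at $e$, hence so does $\beta G$ (by \cite[6L.2]{GilJer}), and since local pseudocompactness gives $\beta G = \beta\widetilde{G}$ (condition (v) of Theorem~\ref{prel:thm:lcps}), the trace on $\widetilde{G}$ yields a clopen base at $e$ there. You instead work entirely inside $\widetilde{G}$: you split $G$ into complementary clopen pieces $U$ and $V$, pass to disjoint open extensions $U'$, $V'$ whose closures $K_1$, $K_2$ cover $\widetilde{G}$, and then use the $Oz$-property of the locally compact group $\widetilde{G}$ (Theorems~\ref{real:thm:RS} and~\ref{real:thm:Oz}) to see that $K_1\cap K_2$ is a $G_\delta$-set missing $G$, which must be empty because $G$ is $G_\delta$-dense in $\widetilde{G}$ (condition (vii) of Theorem~\ref{prel:thm:lcps}). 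All the individual steps check out: $K_1=\operatorname{cl}_{\widetilde{G}}U'$ is regular-closed, hence a zero-set, hence a $G_\delta$; $K_1\cap G=\operatorname{cl}_G U=U$ because $U$ is closed in $G$; and disjointness of $K_1$ and $K_2$ together with $K_1\cup K_2=\widetilde{G}$ makes $K_1$ clopen. What your approach buys is self-containment --- it reuses only machinery already developed in \S\ref{sect:real} rather than the external fact \cite[6L.2]{GilJer} --- and it delivers slightly more, namely a base of \emph{compact} clopen neighborhoods of $e$ in $\widetilde{G}$. What the paper's approach buys is brevity. Both proofs use local pseudocompactness in an essential but differently packaged way: equality of Stone--\v{C}ech compactifications there, $G_\delta$-density here; these are of course equivalent conditions in Theorem~\ref{prel:thm:lcps}, and your closing remark that mere local precompactness would not suffice is borne out by the example $\mathbb{Q}/\mathbb{Z}$ given after Theorem~\ref{conn:thm:lcpsLocCon}.
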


\begin{proof}
Suppose that $G$ is zero-dimensional. Then its topology has a clopen base 
at $e$, and thus $\beta G$ has a clopen base at $e$ 
(cf.~\cite[6L.2]{GilJer}). 
Since $G$ is locally pseudocompact, by the implication 
(i)~$\Rightarrow$~(v) of Theorem~\ref{prel:thm:lcps}, one has
\mbox{$\beta G \hspace{-2pt} = 
\hspace{-2pt} \beta \widetilde G\hspace{-1pt}$}. Therefore, 
$\widetilde G$ admits a clopen base at $e$. This shows that $\widetilde G$ 
is zero-dimensional. Since $G$ is a subspace of $\widetilde G$, the converse 
is obvious.
\end{proof}

In connection with the proof of Theorem~\ref{conn:thm:0dim-compl},
it is well to recognize that although the zero-dimensional property is 
inherited by all subspaces, there are zero-dimensional spaces $X$ for 
which $\beta X$ is not zero-dimensional (cf.~\cite[6.2.20]{Engel6}
and~\cite[16P.3]{GilJer}). In particular, a zero-dimensional space in 
our terminology need not have {\it Lebesgue covering dimension} zero.

We already noted in Example~\ref{ex:conn:long}(a) that the conclusion of
Theorem~\ref{conn:thm:0dimqLC} fails for certain (locally) pseudocompact 
groups. Nevertheless, it is possible to obtain a meaningful 
characterization of zero-dimensional quotients of such groups.

\begin{theorem} \label{conn:thm:lcps0dimq}
Let $G$ be a locally pseudocompact group, and $M$ a closed normal 
subgroup. Then $G/M$ is zero-dimensional if and only if \
\mbox{$(\widetilde G)_0 \hspace{-1.5pt}\subseteq \hspace{-1.5pt}
\operatorname{cl}_{\widetilde G} \hspace{-1pt} M\hspace{-1pt}$.}
\end{theorem}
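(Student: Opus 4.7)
The strategy is to reduce the theorem to Theorem~\ref{conn:thm:0dimqLC} applied to the locally compact completion $\widetilde G$ and its closed normal subgroup $N := \operatorname{cl}_{\widetilde G} M$ (which is normal because normality is preserved under closure in a topological group). To do so I first need to identify $\widetilde G/N$ as the Weil completion of $G/M$ and to check that $G/M$ is locally pseudocompact, so that Theorem~\ref{conn:thm:0dim-compl} is applicable.

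The canonical map $\psi \colon G/M \to \widetilde G/N$ induced by $G \hookrightarrow \widetilde G$ is a well-defined continuous injective homomorphism: injectivity follows from $G \cap N = \operatorname{cl}_G M = M$ since $M$ is closed in $G$; density of the image is immediate from density of $G$ in $\widetilde G$ together with continuity of $\pi_{\widetilde G}$. I expect the main obstacle to be verifying that $\psi$ is a \emph{topological} embedding, i.e.\ that the subspace topology inherited from $\widetilde G/N$ agrees with the quotient topology on $G/M$. The easy inclusion is that every $\pi_{\widetilde G}(\tilde V) \cap \psi(G/M)$ is open in $G/M$, using openness of $\pi_{\widetilde G}$. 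For the opposite inclusion, given a basic quotient-open set $\pi_G(U)$ with $U$ open in $G$, choose $\tilde V$ open in $\widetilde G$ with $\tilde V \cap G = U$; it must be shown that $\tilde V N \cap G \subseteq UM$. This is where density of $M$ in $N$ is used: given $g = \tilde v n \in G$ with $\tilde v \in \tilde V$ and $n \in N$, the set $\tilde V^{-1} \tilde v n$ is an open neighborhood of $n$ in $\widetilde G$, and since $n \in \operatorname{cl}_{\widetilde G} M$ it meets $M$, producing an $m \in M$ such that $g m^{-1} \in \tilde V \cap G = U$.

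Once $G/M$ is identified with a dense topological subgroup of $\widetilde G/N$, local precompactness of $G/M$ is routine (the image of a precompact open neighborhood in $G$ is precompact in $G/M$). To upgrade to local pseudocompactness I would verify that $G/M$ is $G_\delta$-dense in $\widetilde G/N$: if $A \subseteq \widetilde G/N$ is a nonempty $G_\delta$-set, then $\pi_{\widetilde G}^{-1}(A)$ is a nonempty $G_\delta$-set in $\widetilde G$, and by Theorem~\ref{prel:thm:lcps}(vii) it meets $G$, whence $A$ meets $G/M$. Then the implication (vii) $\Rightarrow$ (i) of Theorem~\ref{prel:thm:lcps} gives that $G/M$ is locally pseudocompact with Weil completion $\widetilde G/N$.

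With these ingredients in place, the conclusion follows by chaining two equivalences. Theorem~\ref{conn:thm:0dim-compl}, applied to the locally pseudocompact group $G/M$, yields that $G/M$ is zero-dimensional if and only if $\widetilde G/N$ is zero-dimensional; and Theorem~\ref{conn:thm:0dimqLC}, applied to the locally compact group $\widetilde G$ with the closed normal subgroup $N$, yields that $\widetilde G/N$ is zero-dimensional if and only if $(\widetilde G)_0 \subseteq N = \operatorname{cl}_{\widetilde G} M$.
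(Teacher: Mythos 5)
Your proof is correct and follows essentially the same route as the paper's: identify $G/M$ with a dense, $G_\delta$-dense (hence locally pseudocompact) subgroup of the locally compact group $\widetilde G/\operatorname{cl}_{\widetilde G} M$, and then chain Theorem~\ref{conn:thm:0dim-compl} with Theorem~\ref{conn:thm:0dimqLC}. The only difference is that you verify by hand that the canonical map $G/M \to \widetilde G/N$ is a topological isomorphism onto a dense subgroup, whereas the paper invokes Lemma~\ref{conn:lemma:dense-emb} (the Sulley--Grant lemma) for exactly this step.
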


In the proof of Theorem~\ref{conn:thm:lcps0dimq}, we rely on the following 
lemma, which is a variant on a theorem of Sulley  that was 
extended to the non-abelian case by Grant (cf.~\cite{SulleyOMT} 
and~\cite[1.3]{GrantOMT}).

\begin{flemma}[\pcite{1.19}{GLCLTG}] \label{conn:lemma:dense-emb}
Let $G$ be a topological group,\hspace{-0.75pt}
$D$ a dense subgroup,\hspace{-0.0pt} and $M$\hspace{-0.5pt}
a closed normal subgroup of $D\hspace{-1.5pt}$. Then 
\mbox{$N\hspace{-2.6pt}= \hspace{-1pt}\operatorname{cl}_G M$} is a 
normal subgroup of $G$, and the canonical homomorphism
$\bar\pi_{|D}\colon D/M \rightarrow DN/N$ is a topological isomorphism.
\end{flemma}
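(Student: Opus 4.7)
The statement divides into two tasks: establishing that $N := \operatorname{cl}_G M$ is normal in $G$, and verifying that the algebraic bijection $\bar\pi_{|D}\colon D/M \to DN/N$ is a topological isomorphism. Both parts will rely on a common theme: the continuity of the group operations together with the density of $D$ in $G$.

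For normality, fix $m \in M$ and consider the continuous map $\varphi_m\colon G \to G$ defined by $\varphi_m(g) = gmg^{-1}$. Since $M$ is normal in $D$, one has $\varphi_m(D) \subseteq M \subseteq N$, and by continuity together with the density of $D$ in $G$ it follows that $\varphi_m(G) \subseteq \operatorname{cl}_G M = N$. Hence $gMg^{-1} \subseteq N$ for every $g \in G$; taking closures gives $gNg^{-1} \subseteq N$, and applying the same inclusion to $g^{-1}$ yields equality.

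For the second part, well-definedness of $\bar\pi_{|D}$ is immediate from $M \subseteq N$, and surjectivity from the identity $dnN = dN$ for $d \in D$, $n \in N$. Injectivity reduces to the equality $N \cap D = M$, which holds because $M$ is closed in $D$ with the subspace topology, so $M = \operatorname{cl}_D M = \operatorname{cl}_G M \cap D = N \cap D$. Continuity of $\bar\pi_{|D}$ is a consequence of the universal property of the quotient $D \to D/M$: the composition $D \hookrightarrow DN \twoheadrightarrow DN/N$ is continuous and has kernel $D \cap N = M$, so factors continuously through $D/M$.

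The main obstacle is to verify openness, and this is where the density of $D$ in $G$ is needed a second time in a more substantive way. Given an open set $U \subseteq D$, write $U = V \cap D$ with $V$ open in $G$; then $VN$ is open in $G$. The claim is that $VN \cap DN = UN$, which would make $UN$ open in $DN$ (hence $\bar\pi_{|D}$ open on the image of $U$, since the quotient $DN \to DN/N$ is open). The nontrivial inclusion reduces to showing $VN \cap D \subseteq UN$: given $d = vn$ with $d \in D$, $v \in V$, and $n \in N$, approximate $n$ by a net $(m_\alpha)$ in $M \subseteq D$ with $m_\alpha \to n$; then $d m_\alpha^{-1} \to v$, so eventually $d m_\alpha^{-1} \in V \cap D = U$, giving $d = (d m_\alpha^{-1}) m_\alpha \in UM \subseteq UN$, as required.
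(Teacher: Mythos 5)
Your proof is correct. Note that the paper itself offers no proof of this lemma: it is stated as a known fact, cited to \pcite{1.19}{GLCLTG} and described as a variant of a theorem of Sulley extended to the non-abelian case by Grant, so there is no argument in the paper to compare against. Your write-up supplies a complete, self-contained proof: the normality argument via continuity of $\varphi_m$ and density of $D$ is standard and sound; injectivity correctly reduces to $N\cap D=\operatorname{cl}_D M=M$; continuity follows from the universal property of the quotient; and the only genuinely delicate step, openness, is handled properly --- the identity $VN\cap DN=UN$ does reduce to $VN\cap D\subseteq UN$, and the net approximation $m_\alpha\to n$ in $M$ with $dm_\alpha^{-1}\to v$ eventually landing in $V\cap D=U$ closes the gap.
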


\begin{proof}[Proof of Theorem~\ref{conn:thm:lcps0dimq}.]
Put \mbox{$N\hspace{-2.6pt}= \hspace{-1pt}
\operatorname{cl}_{\widetilde G} M\hspace{-2pt}$}.
By Lemma~\ref{conn:lemma:dense-emb}, with $\widetilde{G}$ and $G$ 
replacing $G$ and $D$, respectively, there is a topological isomorphism
$\varphi$ from $G/M$ onto a dense subgroup of the locally compact group
\mbox{$\widetilde G / N\hspace{-1.5pt}$}.  Thus,
\mbox{$\widetilde G / N$} is the completion of 
\mbox{$\varphi(G/M)$}.
By the implication (i)~$\Rightarrow$~(vii) of 
Theorem~\ref{prel:thm:lcps},  $G$ is $G_\delta$-dense in 
$\widetilde G \hspace{-1pt}$. Consequently, the image 
\mbox{$\varphi(G/M)$} is  $G_\delta$-dense 
in~\mbox{$\widetilde G / N\hspace{-1.5pt}$}, and so
by Theorem~\ref{prel:thm:lcps},  \mbox{$\varphi(G/M)$} is also 
locally pseudocompact. Therefore, by 
Theorem~\ref{conn:thm:0dim-compl},
\mbox{$\varphi(G/M)$} is zero-dimensional if and only if 
\mbox{$\widetilde G / N$} is zero-dimensional.
By Theorem~\ref{conn:thm:0dimqLC}, the latter holds if and only if 
\mbox{$(\widetilde G)_0 \hspace{-1.5pt}\subseteq \hspace{-1.5pt}
N\hspace{-2pt}$.}
\end{proof}

\begin{corollary} \label{conn:cor:G/qG0}
Let $G$ be a locally pseudocompact group. Then:

\begin{myalphlist}

\item
$G/q(G)$ is zero-dimensional if and only if $q(G)$ is dense in 
$(\widetilde G)_0$;

\item
$G/G_0$ is zero-dimensional if and only if $G_0$ is dense in 
$(\widetilde G)_0$, in which case 
\mbox{$G_0 \hspace{-2pt} = \hspace{-1.5pt} q(G)$}. \qed

\end{myalphlist}
\end{corollary}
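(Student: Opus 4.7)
The plan is to deduce both parts directly from Theorem~\ref{conn:thm:lcps0dimq}, applied with $M$ chosen to be $q(G)$ and $G_0$ respectively. Since both $q(G)$ and $G_0$ are closed normal subgroups of $G$ (as noted in the section introducing these functorial subgroups), the hypotheses of Theorem~\ref{conn:thm:lcps0dimq} are satisfied in both cases.

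For part (a), I would take $M = q(G)$. Theorem~\ref{conn:thm:lcps0dimq} then says that $G/q(G)$ is zero-dimensional precisely when $(\widetilde{G})_0 \subseteq \operatorname{cl}_{\widetilde G} q(G)$. The key observation is that by Theorem~\ref{conn:thm:qqoo}(b) we have $q(G) = (\widetilde{G})_0 \cap G \subseteq (\widetilde{G})_0$, and since $(\widetilde{G})_0$ is closed in $\widetilde{G}$, one automatically gets $\operatorname{cl}_{\widetilde G} q(G) \subseteq (\widetilde{G})_0$. Therefore the condition $(\widetilde{G})_0 \subseteq \operatorname{cl}_{\widetilde G} q(G)$ is equivalent to the equality $\operatorname{cl}_{\widetilde G} q(G) = (\widetilde{G})_0$, which is precisely the statement that $q(G)$ is dense in $(\widetilde{G})_0$.

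For part (b), I would take $M = G_0$. Again by Theorem~\ref{conn:thm:lcps0dimq}, $G/G_0$ is zero-dimensional iff $(\widetilde{G})_0 \subseteq \operatorname{cl}_{\widetilde G} G_0$. Now $G_0 \subseteq q(G) \subseteq (\widetilde{G})_0$ (the second inclusion by Theorem~\ref{conn:thm:qqoo}(b)), so $\operatorname{cl}_{\widetilde G} G_0 \subseteq (\widetilde{G})_0$, and the condition becomes the density of $G_0$ in $(\widetilde{G})_0$. For the ``moreover'' clause, I would use the fact that $G_0$ is closed in $G$; since $G$ carries the subspace topology from $\widetilde{G}$, the closure of $G_0$ in $G$ equals $(\operatorname{cl}_{\widetilde G} G_0) \cap G$, so
\begin{align}
G_0 = (\operatorname{cl}_{\widetilde G} G_0) \cap G = (\widetilde{G})_0 \cap G = q(G),
\end{align}
where the middle equality uses the assumed density and the last one is Theorem~\ref{conn:thm:qqoo}(b).

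I do not anticipate any serious obstacle: the entire argument is a formal consequence of the preceding theorem together with the already-proved identifications in Theorem~\ref{conn:thm:qqoo}. The only subtlety worth double-checking is the computation of the closure of $G_0$ in $G$ inside $\widetilde{G}$ for part (b), which depends on $G_0$ being closed in $G$ (a standard fact about connected components in Tychonoff groups).
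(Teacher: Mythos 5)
Your argument is correct and is exactly the derivation the paper intends: the corollary is stated with an immediate \qed because it follows from Theorem~\ref{conn:thm:lcps0dimq} applied with $M=q(G)$ and $M=G_0$, combined with the identification $q(G)=(\widetilde G)_0\cap G$ from Theorem~\ref{conn:thm:qqoo}(b), just as you do. Your closing computation $G_0=(\operatorname{cl}_{\widetilde G}G_0)\cap G=(\widetilde G)_0\cap G=q(G)$ is the right way to obtain the final clause of (b).
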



\begin{discussion}
Theorems~\ref{conn:thm:0dim-compl} and~\ref{conn:thm:lcps0dimq} were 
inspired by the work of Dikranjan (cf.~\cite{DikPS0dim}), and
Corollary~\ref{conn:cor:G/qG0} generalizes \cite[1.7]{DikPS0dim}.
Dikranjan showed that if every closed subgroup of $G$ is pseudocompact, then 
$G/G_0$ is zero-dimensional and $G_0$ is dense in $(\widetilde G)_0$.
(cf.~\cite[1.2]{DikPS0dim}). It is natural to ask whether a~similar
statement is true if one replaces ``pseudocompact" with ``locally 
pseudocompact."
\end{discussion}

\begin{problem} \label{conn:prob}
Let $G$ be a topological group such that every closed subgroup of $G$ 
is locally pseudocompact. Is $G/G_0$ zero-dimensional? Equivalently, is 
$G_0$ dense in $(\widetilde G)_0$?
\end{problem}

After the present manuscript was submitted, Dikranjan and Luk\'acs 
provided a positive answer to Problem~\ref{conn:prob} 
(cf.~\cite[Theorem~A]{DikGL3}).

\bigskip

We turn now to connectedness in the local context. 
Recall that  a space $X$ is {\em locally connected} if 
each connected component of every open subspace of $X$ is open.
The proof of the following easy lemma is omitted.

\begin{lemma} \label{conn:lemma:denseLocCon}
Let $G$ be a topological group, and $D$ a dense subgroup.
If $D$ is locally connected, then so is $G$.
\qed
\end{lemma}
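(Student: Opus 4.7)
The plan is to exploit the homogeneity of a topological group together with the classical fact that any set trapped between a connected set and its closure is connected.

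First I would reduce the problem to finding, for every neighborhood $U$ of the identity $e$ in $G$, a connected open set $V$ with $e\in V\subseteq U$. This is sufficient because every topological group is a homogeneous space (left translations are homeomorphisms), so a local base of connected open sets at $e$ yields one at every point, which is equivalent to local connectedness.

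Next, given such $U$, the set $U\cap D$ is an open neighborhood of $e$ in $D$. Since $D$ is locally connected, there is an open (in $D$) connected set $V'$ with $e\in V'\subseteq U\cap D$. Writing $V'=V_0\cap D$ for some $V_0$ open in $G$ and replacing $V_0$ with $V:=V_0\cap U$, I get an open set $V\subseteq U$ in $G$ with
\begin{align}
V\cap D = V_0\cap U\cap D = V'\cap (U\cap D) = V',
\end{align}
using that $V'\subseteq U\cap D$. Thus $V'\subseteq V\subseteq U$ and $V'=V\cap D$.

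The key step is now to observe that $V'$ is dense in $V$: indeed, $V$ is open in $G$ and $D$ is dense in $G$, so $V\subseteq \operatorname{cl}_G(V\cap D)=\operatorname{cl}_G V'$. Since $V'$ is connected in $D$, it is connected as a subspace of $G$; and since $V'\subseteq V\subseteq \operatorname{cl}_G V'$, the set $V$ is a connected open neighborhood of $e$ contained in $U$, by the standard fact that any set sandwiched between a connected set and its closure is connected. This completes the argument.

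I do not expect a serious obstacle. The only subtlety is the verification that after shrinking $V_0$ to $V_0\cap U$ the trace on $D$ remains exactly $V'$, and the use of density of $D$ in $G$ to guarantee that $V'$ is dense in $V$; both are straightforward once one writes them out carefully.
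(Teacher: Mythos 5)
Your proof is correct. The paper omits the proof of this lemma as ``easy,'' and your argument --- reducing by homogeneity to the identity (which lies in $D$ because $D$ is a subgroup), pulling back a connected open set $V'$ from $D$, and using that the open set $V$ satisfies $V'\subseteq V\subseteq\operatorname{cl}_G V'$ so that the sandwich fact for connectedness applies --- is exactly the standard argument one would supply; it correctly isolates the role of the group structure, without which the statement fails for general dense subspaces (the open arc of the topologist's sine curve is locally connected and dense in its closure, which is not).
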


\begin{theorem} \label{conn:thm:lcpsLocCon}
Let $G$ be a locally pseudocompact group. Then $G$ is locally connected if 
and only if\hspace{1.5pt}  $\widetilde G$ is locally connected.
\end{theorem}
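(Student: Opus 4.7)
The plan is as follows. The ``only if'' direction is immediate from Lemma~\ref{conn:lemma:denseLocCon}, since $G$ is dense in $\widetilde G$. For the converse, assume $\widetilde G$ is locally connected; I will show that every point $x \in G$ has a neighborhood base of connected open sets in $G$. Given an open $U \subseteq G$ containing $x$, write $U = V \cap G$ for some open $V \subseteq \widetilde G$. Local connectedness of $\widetilde G$ furnishes a connected open set $W$ with $x \in W \subseteq V$, so $W \cap G$ is an open neighborhood of $x$ in $G$ contained in $U$. The heart of the argument is to show that $W \cap G$ is connected.

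To do this, I exploit the $Oz$-property together with the $G_\delta$-density of $G$ in $\widetilde G$ guaranteed by local pseudocompactness via Theorem~\ref{prel:thm:lcps}(vii). Any non-empty $G_\delta$-subset of the open set $W$ is a $G_\delta$-subset of $\widetilde G$ and therefore meets $G$, so $W \cap G$ is $G_\delta$-dense in $W$. By Corollary~\ref{real:cor:LPOz} and Theorem~\ref{real:thm:BlaHag}(a), $W$ itself is an $Oz$-space (being open in $\widetilde G$), so by Theorem~\ref{real:thm:Oz} the dense subset $W \cap G$ is $z$-embedded in $W$. Theorem~\ref{real:thm:BlaHag}(b) then identifies $\upsilon(W \cap G)$ with the $G_\delta$-closure of $W \cap G$ in $\upsilon W$; combined with $G_\delta$-density, this yields $W \subseteq \upsilon(W \cap G)$, which is exactly the statement that $W \cap G$ is $C$-embedded in $W$.

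With $C$-embedding in hand, suppose toward a contradiction that $W \cap G = A \sqcup B$ is a non-trivial clopen partition of $W \cap G$. The characteristic function $\chi_A \colon W \cap G \to \{0,1\} \subseteq \mathbb{R}$ is continuous, so it extends to a continuous $\tilde f \colon W \to \mathbb{R}$. Since $\{0,1\}$ is closed in $\mathbb{R}$ and $W \cap G$ is dense in $W$, the preimage $\tilde f^{-1}(\{0,1\})$ is a closed subset of $W$ that contains $W \cap G$, hence equals $W$; thus $\tilde f(W) \subseteq \{0,1\}$. Connectedness of $W$ then forces $\tilde f$ to be constant, contradicting the non-emptiness of both $A$ and $B$. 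So $W \cap G$ is connected, finishing the proof. The main obstacle is precisely this final step: a dense subspace of a connected space need not be connected in general, and it is the interplay between $G_\delta$-density and the $Oz$-property of $\widetilde G$ that rescues the argument by upgrading density to $C$-embedding.
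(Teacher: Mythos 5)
Your argument is correct, but it reaches the conclusion by a genuinely different route than the paper. The paper's proof starts from a \emph{precompact} $W\in\mathcal{N}(G)$ with $\operatorname{cl}_G W\subseteq V$, takes the (open) identity component $C$ of a corresponding open $W'\subseteq\widetilde G$, and sets $U=C\cap G$; local pseudocompactness then gives $\beta(\operatorname{cl}_G U)=\operatorname{cl}_{\widetilde G}U=\operatorname{cl}_{\widetilde G}C$ via Theorem~\ref{prel:thm:lcps}, so $\operatorname{cl}_G U$ is connected by Theorem~\ref{conn:thm:betaX}. This yields a base of connected \emph{closed} neighborhoods of $e$ (so one is implicitly using that connectedness im kleinen at every point implies local connectedness, via homogeneity). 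You instead use local pseudocompactness only through $G_\delta$-density of $G$ in $\widetilde G$ (Theorem~\ref{prel:thm:lcps}(vii)), and upgrade density of $W\cap G$ in a connected open $W$ to $C$-embedding via the $Oz$-machinery (Corollary~\ref{real:cor:LPOz}, Theorems~\ref{real:thm:Oz} and~\ref{real:thm:BlaHag}), after which the clopen-partition argument is immediate. Your version has two small advantages: it produces connected \emph{open} neighborhoods directly, matching the strongest form of local connectedness without any appeal to the im-kleinen equivalence, and it never needs precompactness of the neighborhood --- in effect it shows that any $G_\delta$-dense subspace of a locally connected locally compact group (indeed, of a locally connected $Oz$-space) is locally connected. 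The trade-off is that it leans on the heavier Ross--Stromberg/Blair/Blair--Hager apparatus, whereas the paper's proof needs only Theorem~\ref{prel:thm:lcps} and the elementary fact that $X$ is connected iff $\beta X$ is. Both uses of the cited lemmas check out, in particular your identification $W\subseteq\upsilon(W\cap G)$ from $G_\delta$-density together with Theorem~\ref{real:thm:BlaHag}(b), which is exactly how the paper itself deploys that result in Lemma~\ref{real:lemma:zups}.
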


\begin{proof}
Lemma~\ref{conn:lemma:denseLocCon} proves the implication
$\Rightarrow$.  For $\Leftarrow$, let \mbox{$V \hspace{-2.5pt} \in 
\hspace{-2pt} \mathcal{N}(G)$}. There is \mbox{$W \hspace{-2.5pt} \in 
\hspace{-2pt} \mathcal{N}(G)$} such that
\mbox{$\operatorname{cl}_G W  \hspace{-2pt}\subseteq\hspace{-2pt} V$} 
and $W$ is precompact. Then there is 
\mbox{$W^\prime \hspace{-2.5pt}\in\hspace{-2pt}\mathcal{N}(\widetilde G)$} 
such that 
\mbox{$W \hspace{-3pt}=  \hspace{-1pt}W^\prime
\hspace{-2.1pt}\cap\hspace{-2pt} G\hspace{-1pt}$}, 
and thus
\begin{align}
W^\prime \subseteq \operatorname{cl}_{\widetilde G} W^\prime
= \operatorname{cl}_{\widetilde G} (W^\prime\cap G) = 
\operatorname{cl}_{\widetilde G} W.
\end{align}
Let $C$ denote the connected component of the identity in $W^\prime$. 
Since $\widetilde G$ is locally connected, one has
\mbox{$C \hspace{-2pt}\in\hspace{-2pt} \mathcal{N}(\widetilde G)$}.
Consequently, 
\mbox{$U \hspace{-2.75pt} = \hspace{-1.25pt} C\hspace{-2pt}\cap 
\hspace{-2pt} G \hspace{-2pt}\in\hspace{-2pt} \mathcal{N}(G)$,} 
and  
\mbox{$U \hspace{-2.5pt} \subseteq \hspace{-2pt} 
W^\prime\hspace{-3pt} \cap \hspace{-2pt} G 
\hspace{-1.75pt}=  \hspace{-1.5pt}W$}; 
in particular, $U$~is precompact.
Since $G$ is locally pseudocompact, using
Theorem~\ref{prel:thm:lcps}, we obtain that
\begin{align}
\beta (\operatorname{cl}_G U) = \operatorname{cl}_{\widetilde G} U = 
\operatorname{cl}_{\widetilde G} (C \cap G) = 
\operatorname{cl}_{\widetilde G} C,
\end{align}
which is connected, being a closure of the connected set $C$.
Therefore, by Theorem~\ref{conn:thm:betaX}, 
$\operatorname{cl}_G U$ is connected. Finally, observe that
\mbox{$e \hspace{-2pt}\in\hspace{-2pt} U \hspace{-2.25pt}\subseteq 
\hspace{-1.5pt} \operatorname{cl}_G U \hspace{-2pt}\subseteq \hspace{-1.5pt}
\operatorname{cl}_G W \hspace{-2pt}\subseteq \hspace{-2pt}
V\hspace{-2pt}$,} as desired.
\end{proof}

\begin{example}
Let \mbox{$G = \mathbb{Q}/\mathbb{Z}$.}  Clearly, \mbox{$\widetilde G = 
\mathbb{R}/\mathbb{Z}$} is compact, connected, and locally connected;~in 
particular, $G$ is precompact. It is obvious (and also follows from 
Theorem~\ref{conn:thm:3stat}) that the group $G$ 
has no proper open subgroups. However, $G$ is 
zero-dimensional (cf.~\cite[6.2.8]{Engel6}). 
This shows that the  assumption of local pseudocompactness in 
Theorems~\ref{conn:thm:3stat} (the implication (ii) $\Rightarrow$ (i))
and~\ref{conn:thm:lcpsLocCon} cannot be 
omitted even for precompact groups.
\end{example}

\section{Which locally precompact abelian groups occur as a 
quasi-component?}

\label{sect:LPAq}

In order to answer the question in the title of this section, some further 
terminology is required. Recall that a topological group is {\em compactly 
generated} if it is generated algebraically by some compact subset. Since 
every connected group is generated by every neighborhood of its identity 
(cf.~\cite[1.30]{GLCLTG}), every connected locally compact group is 
compactly generated.

We use additive notation for abelian topological groups. We put
\mbox{$\mathbb{T}\hspace{-2pt}= \hspace{-1pt}\mathbb{R}/\mathbb{Z}$,}
which is the circle group written additively.
Recall that if $p$ is a prime number, then the group $\mathbb{Z}_p$ of 
{\em $p$-adic integers} is the (projective) limit of the quotients
$\mathbb{Z}/p^n \mathbb{Z}$. The group $\mathbb{Z}_p$ is compact,
zero-dimensional, and $\{p^n \mathbb{Z}_p\}_{n \in \mathbb{N}}$ is
a~base of open subgroups for the topology at zero 
(cf.~\cite[\S10]{HewRos} and~\cite[\S3.5]{DikProSto}).

\begin{definition}
A topological group $G$ is {\em precompactly generated} if there is a 
precompact set \mbox{$X\hspace{-2.5pt}\subseteq\hspace{-2pt} G$} such that 
\mbox{$G\hspace{-1.75pt} =\hspace{-2pt}\langle X \rangle$}.
\end{definition}

\begin{lemma} \label{LPAq:lemma:charpcg}
Let $G$ be a locally precompact group. Then $G$ is precompactly generated 
if and only if $\widetilde G$ is compactly generated.
\end{lemma}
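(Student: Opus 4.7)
The plan is to prove both implications using three ingredients: Theorem~\ref{prel:thm:charpr} (which identifies precompactness of a subset with compactness of its closure in $\widetilde G$), local compactness of $\widetilde G$, and the bijective correspondence between open subgroups of $G$ and of $\widetilde G$ given by Lemma~\ref{prel:lemma:osubgrp}(a).

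For the forward direction, assume $G = \langle X \rangle$ with $X$ precompact. I would set $K_0 := \operatorname{cl}_{\widetilde G} X$ (which is compact by Theorem~\ref{prel:thm:charpr}), pick a precompact open neighborhood $U$ of the identity in $\widetilde G$, and put $K := K_0 \cup \operatorname{cl}_{\widetilde G} U$, which is compact. Then $\langle K \rangle$ contains the open set $U$, so it is an open (hence closed) subgroup of $\widetilde G$. Since it contains the dense subgroup $G = \langle X \rangle$, it must equal $\widetilde G$. This direction is routine.

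For the converse, suppose $\widetilde G = \langle K \rangle$ with $K$ compact. Using local precompactness of $G$, choose an open precompact neighborhood $V$ of $e$ in $G$ and let $H := \langle V \rangle$, an open subgroup of $G$. By Lemma~\ref{prel:lemma:osubgrp}(a), $M := \operatorname{cl}_{\widetilde G} H$ is an open subgroup of $\widetilde G$ with $M \cap G = H$. The crucial observation is that the left cosets of $M$ form an open partition of $\widetilde G$, so the compact set $K$ meets only finitely many of them, say $K \subseteq k_1 M \cup \cdots \cup k_n M$ with $k_i \in K$; hence $\widetilde G = \langle K \rangle \subseteq \langle \{k_1, \ldots, k_n\} \cup M \rangle$. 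Using density of $G$ in $\widetilde G$ together with openness of $k_i M$, I would then pick $g_i \in G \cap k_i M$; since $g_i M = k_i M$, this preserves the identity and gives $\widetilde G = \langle \{g_1, \ldots, g_n\} \cup M \rangle$.

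The key final step is to descend back to $G$. Set $J := \langle \{g_1, \ldots, g_n\} \cup V \rangle \subseteq G$. Since $J$ contains the open subgroup $H$, it is itself open in $G$, so by Lemma~\ref{prel:lemma:osubgrp}(a) its closure $\operatorname{cl}_{\widetilde G} J$ is an open subgroup of $\widetilde G$. This closure contains $g_1, \ldots, g_n$ as well as $\operatorname{cl}_{\widetilde G} H = M$, and therefore contains $\langle \{g_1, \ldots, g_n\} \cup M \rangle = \widetilde G$. Applying the bijection of Lemma~\ref{prel:lemma:osubgrp}(a) in reverse yields $J = \widetilde G \cap G = G$, so $X := \{g_1, \ldots, g_n\} \cup V$ is a precompact generating set for $G$. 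The main subtlety to watch for is that $H$ (and hence $M$) need not be normal, so the argument must avoid treating $\widetilde G / M$ as a group quotient and instead rely on the coset partition together with the open-subgroup bijection to move the generation statement between $\widetilde G$ and $G$.
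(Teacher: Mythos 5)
Your proof is correct. The forward direction is essentially the paper's: there the compact generating set is taken to be $(\operatorname{cl}_{\widetilde G}V)Y$ with $Y=\operatorname{cl}_{\widetilde G}X$, where you take a union instead of a product, an immaterial difference. For the converse the paper is more direct: with $V\in\mathcal{N}(\widetilde G)$ having compact closure it simply puts $U:=(VK)\cap G$, observes that $U$ is open in $G$ and precompact because it lies in the compact set $(\operatorname{cl}_{\widetilde G}V)K$, and asserts that $U$ generates $G$ --- the justification being exactly the descent principle you isolate at the end, namely that $\langle U\rangle$ is an open subgroup of $G$ whose closure in $\widetilde G$ contains $\operatorname{cl}_{\widetilde G}(VK)\supseteq K$ and hence equals $\widetilde G$, so Lemma~\ref{prel:lemma:osubgrp}(a) forces $\langle U\rangle=G$. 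Your route replaces this one-line construction with a finite cover of $K$ by left cosets of the open subgroup $M=\operatorname{cl}_{\widetilde G}\langle V\rangle$ and a transfer of the coset representatives into $G$ by density; it is longer, but it makes the precompact generating set $\{g_1,\dots,g_n\}\cup V$ completely explicit, and your closing remark about not treating $\widetilde G/M$ as a group is well taken (the paper's argument likewise never needs normality). Both proofs ultimately rest on the same two pillars: Theorem~\ref{prel:thm:charpr} and the open-subgroup correspondence of Lemma~\ref{prel:lemma:osubgrp}(a).
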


\begin{proof}
Let \mbox{$X\hspace{-2.5pt}\subseteq\hspace{-2pt} G$} be a precompact set 
such that \mbox{$G\hspace{-1.75pt} =\hspace{-2pt}\langle X \rangle$}. 
Then, by Theorem~\ref{prel:thm:charpr}, 
\mbox{$Y\hspace{-2.25pt}: =\hspace{-1pt}\operatorname{cl}_{\widetilde G} X$} 
is compact. Since $G$ is locally precompact, $\widetilde G$ is locally 
compact, and so there is \mbox{$V \hspace{-2.75pt} \in \hspace{-2pt} 
\mathcal{N}(\widetilde G)$} such that 
\mbox{$\operatorname{cl}_{\widetilde G} V$} is compact.
It is easy to see that the compact set 
\mbox{$K\hspace{-2.25pt} := \hspace{-1.2pt} 
(\operatorname{cl}_{\widetilde G} V)Y$} generates $\widetilde 
G\hspace{-1pt}$. 

Conversely, suppose that $\widetilde G$ is compactly generated, that is,
\mbox{$\widetilde G \hspace{-1.75pt}= \hspace{-2.1pt} \langle K \rangle$}, 
where $K$ is compact. Since $G$ is locally precompact, its completion
$\widetilde G$ is 
locally compact, and so there is 
\mbox{$V \hspace{-2.75pt} \in \hspace{-2pt} \mathcal{N}(\widetilde G)$}~such 
that \mbox{$\operatorname{cl}_{\widetilde G} V$} is compact. 
Put \mbox{$U \hspace{-2.5pt} := \hspace{-1.75pt} (VK) 
\hspace{-2.1pt} \cap \hspace{-1.5pt} G\hspace{-1pt}$}. As
$VK$ is open in $\widetilde G\hspace{-1pt}$, the set $U$ is open in 
$G\hspace{-1pt}$. The set~$U$~is precompact, because it is contained in 
the compact set  
\mbox{$\operatorname{cl}_{\widetilde G} (VK) \hspace{-2pt}= \hspace{-1pt}
(\operatorname{cl}_{\widetilde G} V)K$}.  It is easily seen 
that $U$ generates~$G$. 
\end{proof}

For a topological space $X\hspace{-1.5pt}$, we denote by $w(X)$ the {\em 
weight} of $X\hspace{-1.5pt}$, that is, the smallest possible cardinality 
of a base for the topology of~$X\hspace{-1pt}$.

\begin{theorem}  \label{LPAq:thm:pcg}
Let $G$ be a locally precompact abelian group. The 
following statements are equivalent:

\begin{myromanlist}

\item
$G$ is precompactly generated;

\item
$G$ is topologically isomorphic to a subgroup of a connected locally 
compact abelian group~$C\hspace{-1pt}$.

\vskip 1pt
\end{myromanlist}
Furthermore, 
\begin{myalphlist}

\item
if $G$ is infinite, then the group $C$ in {\rm (ii)} may be 
chosen such that \mbox{$w(C)\hspace{-2pt}= \hspace{-1.5pt}w(G)$};

\item
if $G$ is precompact, then the group $C$ in {\rm (ii)} may be chosen to be 
compact.

\end{myalphlist}
\end{theorem}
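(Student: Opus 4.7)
The plan is to reduce the theorem, via Lemma~\ref{LPAq:lemma:charpcg}, to a question about the compactly generated locally compact abelian completion $\widetilde{G}$, and then to invoke the Pontryagin--van Kampen structure theorem for compactly generated LCA groups, namely that $\widetilde{G}\cong\mathbb{R}^n\times\mathbb{Z}^m\times K$ with $K$ compact.

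For the implication (ii) $\Rightarrow$ (i), I would note that when $G$ embeds topologically in a locally compact group $C$, the closure $\operatorname{cl}_C G$ is locally compact and serves as the (unique) Weil completion $\widetilde G$. It therefore suffices to show that any closed subgroup of a connected LCA group is compactly generated. Write $C\cong\mathbb R^n\times K_0$ with $K_0$ compact connected. Since $K_0$ is compact, the projection $\pi\colon C\to C/K_0\cong\mathbb R^n$ carries the closed set $\widetilde G$ to a closed subgroup of $\mathbb R^n$, which is of the form $\mathbb R^a\times\mathbb Z^b$. The kernel $\widetilde G\cap K_0$ is compact, so $\widetilde G$ is an extension of a compactly generated group by a compact one and is itself compactly generated. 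Then Lemma~\ref{LPAq:lemma:charpcg} yields (i).

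For (i) $\Rightarrow$ (ii), apply Lemma~\ref{LPAq:lemma:charpcg} in the reverse direction: $\widetilde G$ is compactly generated and LCA, so by the structure theorem $\widetilde G\cong\mathbb R^n\times\mathbb Z^m\times K$ with $K$ compact abelian. It remains to embed $K$ as a closed subgroup of some connected compact abelian group $\widetilde K$; I would do this through Pontryagin duality. Pick any surjection $F\twoheadrightarrow \widehat K$ with $F$ a free (hence torsion-free) abelian group; dualizing gives a closed embedding $K\hookrightarrow\widehat F=:\widetilde K$, and $\widetilde K$ is connected because its dual $F$ is torsion-free. Combining $\mathbb Z^m\hookrightarrow\mathbb R^m$ with $K\hookrightarrow\widetilde K$ gives a closed embedding $\widetilde G\hookrightarrow C:=\mathbb R^{n+m}\times\widetilde K$ into a connected LCA group. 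Composing with $G\hookrightarrow\widetilde G$ proves (ii).

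For refinement (a), choose $F$ to have a generating set of cardinality $|\widehat K|$ (finite if $\widehat K$ is finite); then $|F|=\max(\omega,|\widehat K|)$, so $w(\widetilde K)=|F|=\max(\omega,w(K))$ by the weight/cardinality duality for compact vs.\ discrete abelian groups. Using $w(G)=w(\widetilde G)$ (density) and the product formula for weight, one gets $w(C)=\max(\omega,w(\widetilde K))=\max(\omega,w(K))=w(\widetilde G)=w(G)$ when $G$ is infinite; the reverse inequality $w(G)\le w(C)$ is automatic from the embedding. For refinement (b), precompactness of $G$ forces $\widetilde G$ to be compact, so the structure theorem forces $n=m=0$ and $\widetilde G=K$; then $C=\widetilde K$ is compact. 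The only real obstacle in this plan is packaging the Pontryagin-duality embedding $K\hookrightarrow\widetilde K$ so that the resulting $\widetilde K$ simultaneously has the right weight, but the free-group construction above handles both tasks at once.
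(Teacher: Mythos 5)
Your proposal is correct and follows essentially the same route as the paper: both directions are reduced to the completion via Lemma~\ref{LPAq:lemma:charpcg}, the structure theorem for compactly generated LCA groups supplies the decomposition $\widetilde G\cong\mathbb{R}^n\times\mathbb{Z}^m\times K$, and the compact factor is embedded into a connected compact group by dualizing a surjection from a free abelian group (the paper phrases this as ``$M$ embeds in $\mathbb{T}^{w(G)}$,'' which is the same duality argument). The one genuine difference is in (ii)~$\Rightarrow$~(i): the paper simply invokes Morris's theorem (Theorem~\ref{LPAq:thm:VR}, every closed subgroup of a compactly generated LCA group is compactly generated), whereas you reprove the special case you need---closed subgroups of connected LCA groups are compactly generated---by writing $C\cong\mathbb{R}^n\times K_0$ with $K_0$ compact and using that the projection modulo $K_0$ is a perfect (hence closed) map, so that $\operatorname{cl}_C G$ is an extension of a closed subgroup of $\mathbb{R}^n$ by a compact group. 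That argument is sound and makes the proof self-contained at the cost of a little extra work; your weight bookkeeping in (a) and the reduction in (b) match the paper's.
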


Theorem~\ref{LPAq:thm:pcg} is a generalization to locally precompact 
groups of the statement that every compactly generated locally compact 
abelian group is a topological subgroup of a connected locally compact 
abelian group  (cf.~\cite[9.8]{HewRos} and~\cite[23.11]{StroppelLCG}).

In order to prove Theorem~\ref{LPAq:thm:pcg}, we rely on the following 
known, albeit perhaps not sufficiently well-known, result of Morris. We 
are grateful to Kenneth A. Ross for directing us to the cited references.

\begin{ftheorem}[\pcite{Corollary~2}{MorrisVR},
\cite{MorrisVRerr},
\pcite{p.~93, Exercise~1}{Morris}, and \pcite{23.13}{StroppelLCG}]
\label{LPAq:thm:VR}
Every closed subgroup of a compactly generated locally compact abelian 
group is compactly generated.
\end{ftheorem}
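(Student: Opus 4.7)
The plan is to reduce the statement to the well-known structure theorem for compactly generated locally compact abelian (LCA) groups, which asserts that any such group $G$ is topologically isomorphic to $\mathbb{R}^n \times \mathbb{Z}^m \times K$ for some non-negative integers $n$, $m$ and some compact abelian group $K$ (cf.~\cite[9.8]{HewRos}). Assuming this structural decomposition, the remainder of the argument is straightforward topological group theory.

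Let $H$ be a closed subgroup of $G \cong \mathbb{R}^n \times \mathbb{Z}^m \times K$. Consider the canonical projection $\pi \colon G \to G/K \cong \mathbb{R}^n \times \mathbb{Z}^m$. Since $K$ is compact, $\pi$ is a closed map, so $\pi(H)$ is a closed subgroup of $\mathbb{R}^n \times \mathbb{Z}^m$. Moreover, $H \cap K$ is closed in the compact group $K$, hence compact, and it is precisely the kernel of $\pi\!\restriction_H$. The natural continuous bijection $H/(H \cap K) \to \pi(H)$ is a topological isomorphism because the compactness of the kernel $H \cap K$ makes $\pi\!\restriction_H$ a closed (hence quotient) map.

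The key technical step is to show that every closed subgroup $L$ of $\mathbb{R}^n \times \mathbb{Z}^m$ is compactly (in fact finitely) generated. I would argue this by induction on $n+m$: using that any closed subgroup of $\mathbb{R}$ is either $\{0\}$, discrete infinite cyclic, or all of $\mathbb{R}$, and that every subgroup of $\mathbb{Z}^m$ is free of rank at most $m$ (Smith normal form), one decomposes $L$ along a suitable coordinate projection, writing $L$ as an extension of a closed subgroup of $\mathbb{R}^{n-1} \times \mathbb{Z}^m$ by either $\{0\}$, $\mathbb{Z}$, or $\mathbb{R}$. Each such extension has a finite generating set obtained by lifting generators and adjoining at most one new element.

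Once $\pi(H)$ is known to be generated by a compact (finite) set $F \subseteq \pi(H)$, choose a lift $\widetilde{F} \subseteq H$ with $\pi(\widetilde{F}) = F$; the set $\widetilde{F}$ is finite. Then $\widetilde{F} \cup (H \cap K)$ is a compact subset of $H$, and its generated subgroup projects onto $\pi(H)$ and contains the kernel $H \cap K$, so it must equal $H$. This shows $H$ is compactly generated. The main obstacle is the closed-subgroup classification in $\mathbb{R}^n \times \mathbb{Z}^m$; everything else is routine extension-theoretic bookkeeping, and the structure theorem for compactly generated LCA groups is cited from the literature.
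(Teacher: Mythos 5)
First, a point of reference: the paper does not prove this statement at all --- it is quoted as a known theorem of Morris, with citations to the literature, so there is no internal proof to compare yours against. Your overall route (structure theorem $G\cong\mathbb{R}^n\times\mathbb{Z}^m\times K$, pass to $G/K$ by the perfect projection $\pi$, handle the compact kernel $H\cap K$ separately) is the standard one found in the cited sources, and it does work --- but not quite as you have written it.

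The genuine gap is in your ``key technical step.'' You propose to induct on $n+m$ by projecting a closed subgroup $L$ of $\mathbb{R}^n\times\mathbb{Z}^m$ onto an $\mathbb{R}$-coordinate and invoking the classification of closed subgroups of $\mathbb{R}$; but the image of a closed subgroup under a coordinate projection need not be closed. For instance, $L=\mathbb{Z}(1,0)+\mathbb{Z}(\alpha,1)$ with $\alpha$ irrational is discrete (hence closed) in $\mathbb{R}^2$, yet its first-coordinate projection is the dense subgroup $\mathbb{Z}+\alpha\mathbb{Z}$ of $\mathbb{R}$, which is none of $\{0\}$, $\mathbb{Z}$, or $\mathbb{R}$. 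So the extension you write down has a possibly non-closed quotient and the induction does not close up as stated. The standard repair is to quote the classification of closed subgroups of Euclidean groups directly: every closed subgroup of $\mathbb{R}^N$ is of the form $V\oplus\Lambda$ with $V$ a linear subspace and $\Lambda$ a discrete free group of finite rank (cf.~\cite[9.11]{HewRos}); since $\mathbb{R}^n\times\mathbb{Z}^m$ is closed in $\mathbb{R}^{n+m}$, every closed subgroup $L$ of it is topologically isomorphic to some $\mathbb{R}^a\times\mathbb{Z}^b$, hence compactly generated. This also exposes a second, smaller slip: such an $L$ is only compactly generated, not finitely generated (it may contain a copy of $\mathbb{R}$), so the generating set $F$ of $\pi(H)$ may be an infinite compact set and your finite lift $\widetilde{F}$ need not exist. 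Instead take $\widetilde{F}:=(\pi|_H)^{-1}(F)$: since $\pi|_H$ is a closed map with compact fibers (cosets of $H\cap K$), it is perfect, so $\widetilde{F}$ is compact; the subgroup $\langle\widetilde{F}\cup(H\cap K)\rangle$ maps onto $\pi(H)$ and contains the kernel of $\pi|_H$, whence it equals $H$. With these two repairs your argument is correct.
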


\begin{remark}
A recent result of K. H. Hofmann and K.-H. Neeb, which
generalizes Morris's theorem, states that closed (almost) soluble subgroups 
of  (almost) connected locally compact groups are compactly generated
(cf.~\cite{HofNeeb}). However, without such extra assumptions,
statements parallel to  
Theorems~\ref{LPAq:thm:pcg} and~\ref{LPAq:thm:VR} may fail for 
non-abelian groups:

\begin{myalphlist}

\item
The semidirect product   
\mbox{$(\mathbb{Z}/2\mathbb{Z})^\mathbb{Z} \rtimes \mathbb{Z}$}, 
where $\mathbb{Z}$
acts by shifts, is locally compact, but is not pro-Lie 
(cf.~\cite{HofMor3}).
Thus, it cannot be a (closed) topological subgroup of a connected 
locally compact group, as every connected locally compact group is 
pro-Lie (cf.~\cite[Theorem 5']{Yamabe1}).

\item
The commutator subgroup of the (discrete) free group on 
\mbox{$n \hspace{-2pt}> \hspace{-2pt} 1$} generators
is a free group of countable rank
(cf.~\cite[Vol. II, p.~36, Theorem I]{KuroshThG}).

\vspace{6pt}

\end{myalphlist}
\end{remark}

\begin{proof}[Proof of Theorem~\ref{LPAq:thm:pcg}.]
(i) $\Rightarrow$ (ii): By
Lemma~\ref{LPAq:lemma:charpcg}, $\widetilde G$ is compactly generated.
Thus, by replacing $G$ with $\widetilde G$, we may assume that $G$ 
itself is locally compact and compactly generated. (Since
\mbox{$w(G) \hspace{-2pt} = \hspace{-1.5pt} w(\widetilde G)$}, doing so 
does not affect the statement concerning equality of weights.) 
Consequently, 
\mbox{$G\hspace{-1pt}\cong\hspace{-1pt} M \hspace{-2.5pt} \times 
\hspace{-1pt} \mathbb{R}^a \hspace{-2.5pt} \times 
\hspace{-1pt}\mathbb{Z}^c\hspace{-2pt}$},
where $M$ is the maximal compact subgroup of $G\hspace{-1pt}$, and
$a,c \hspace{-2pt}\in\hspace{-2pt} \mathbb{N}$
(cf.~\cite[9.8]{HewRos} and~\cite[23.11]{StroppelLCG}). 
Since $M$ is a subgroup of 
$G$, one has \mbox{$w(M) \hspace{-2pt} \leq \hspace{-2pt} w(G)$}, and
so $M$ is topologically isomorphic to a subgroup of
$\mathbb{T}^{w(G)}\hspace{-1.5pt}$. The group $\mathbb{Z}^c$ is 
a subgroup of $\mathbb{R}^c$.  Therefore,
\mbox{$M \hspace{-2.5pt} \times\hspace{-1pt} \mathbb{R}^a \hspace{-2.5pt} 
\times 
\hspace{-1pt}\mathbb{Z}^c\hspace{0pt}$} is topologically isomorphic to a 
subgroup of the connected group
\mbox{$C\hspace{-2pt} := \hspace{-1pt} \mathbb{T}^{w(G)}\hspace{-2pt} 
\times \hspace{-1pt} \mathbb{R}^{a+c}\hspace{-2pt}$}. 

(a) If $G$ is infinite, then $w(G)$ is infinite, and hence
\mbox{$w(C)\hspace{-2.1pt} =\hspace{-1pt}w(\mathbb{T}^{w(G)})
\hspace{-2pt} = \hspace{-1.1pt} w(G)$}.

(b) If $G$ is precompact, then $\widetilde G$ is compact, and so
\mbox{$a\hspace{-2pt} =\hspace{-2pt}c \hspace{-2pt}= \hspace{-2pt}0$}.
Thus, the group \mbox{$C\hspace{-2pt} := \hspace{-1pt} \mathbb{T}^{w(G)}$} 
is compact, and $G$ is topologically isomorphic to a subgroup of $C$.

(ii) $\Rightarrow$ (i): Suppose that 
\mbox{$G \hspace{-2pt} \cong\hspace{-2pt} S$}, where $S$ is a subgroup of 
$C\hspace{-1.pt}$. Since $C$ is connected, 
it is compactly generated. Thus, by Theorem~\ref{LPAq:thm:VR}, 
$\operatorname{cl}_C S$ is compactly generated too, and so
$\widetilde G$ is compactly generated, because it is topologically 
isomorphic to $\operatorname{cl}_C S$. Hence, by 
Lemma~\ref{LPAq:lemma:charpcg}, $G$ is precompactly generated.
\end{proof}

We are now ready to answer the question in the title of the section.

\begin{theorem} \label{LPAq:thm:main}
Let $A$ be a locally precompact abelian group. The following statements 
are equivalent:

\begin{myromanlist}

\item
$A$ is precompactly generated;

\item
there is a locally pseudocompact abelian group $G$ such that
\mbox{$A \hspace{-2pt}\cong \hspace{-1.7pt} q(G) \hspace{-2.25pt}= 
\hspace{-1.5pt} 
(\widetilde G)_0 \hspace{-2pt} \cap \hspace{-1.5pt} G\hspace{-1pt}$}.

\vskip 1pt

\end{myromanlist}
Furthermore, 

\begin{myalphlist}

\item
if  \mbox{$w(A)\hspace{-2pt} \geq\hspace{-2pt} \omega_1$} and {\rm (i)} holds,
then the group $G$ in {\rm (ii)} may be chosen so that 
\mbox{$w(G)\hspace{-2pt}=\hspace{-1.5pt}w(A)$};

\item
if $A$ is precompact, then the group $G$ in {\rm (ii)} may be chosen to be 
pseudocompact; and

\item
if $A$ is connected, then 
\mbox{$A\hspace{-2pt} \cong \hspace{-2pt} G_0 
\hspace{-1.5pt}= \hspace{-2pt} q(G)$}.

\end{myalphlist}
\end{theorem}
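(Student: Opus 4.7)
The plan is to prove the two implications separately, and then extract (a), (b), (c) from the construction in (i) $\Rightarrow$ (ii).

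\emph{(ii) $\Rightarrow$ (i).} Suppose $A \cong q(G) = (\widetilde G)_0 \cap G$ for some locally pseudocompact abelian $G$. Set $K := \operatorname{cl}_{\widetilde G} q(G)$, which is the completion of the (closed, hence locally precompact) subgroup $q(G)$ of $G$. Since $(\widetilde G)_0$ is closed in $\widetilde G$, one has $K \subseteq (\widetilde G)_0$. Now $(\widetilde G)_0$ is a connected locally compact abelian group, hence compactly generated (as noted before Lemma~\ref{LPAq:lemma:charpcg}), and Morris's Theorem~\ref{LPAq:thm:VR} forces its closed subgroup $K$ to be compactly generated too. Lemma~\ref{LPAq:lemma:charpcg} then yields that $q(G) \cong A$ is precompactly generated.

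\emph{(i) $\Rightarrow$ (ii).} By Theorem~\ref{LPAq:thm:pcg}, identify $A$ with a topological subgroup of a connected locally compact abelian group $C$, chosen compact when $A$ is precompact and of weight $w(A)$ when $A$ is infinite. The strategy is to build a locally compact abelian group $L$ and a dense subgroup $G \subseteq L$ such that (1) $L_0 = C$, (2) $G$ is $G_\delta$-dense in $L$, and (3) $G \cap C = A$. Granted these, Theorem~\ref{prel:thm:lcps} yields that $G$ is locally pseudocompact with $\widetilde G = L$, and Theorem~\ref{conn:thm:qqoo}(b) gives $q(G) = L_0 \cap G = C \cap G = A$.

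For the construction, take $L := C \times F$, where $F$ is a zero-dimensional compact abelian group; then automatically $L_0 = C \times \{0\}$, so (1) holds. Build $G$ by transfinite recursion starting from $A \times \{0\}$: enumerate the non-empty $G_\delta$-subsets of $L$ and at each stage adjoin a single point from the next unmet $G_\delta$-set, ensuring that its $F$-coordinate is algebraically independent of all previously adjoined $F$-coordinates. This independence guarantees that any element of the growing subgroup with zero $F$-coordinate already lies in $A \times \{0\}$, so (3) is preserved throughout. The main technical obstacle is to carry out the recursion while respecting part~(a): when $w(A) \geq \omega_1$, choosing $F := \{0,1\}^{w(A)}$ gives $w(L) = w(A)$, and Hewitt--Marczewski--Pondiczery type counting shows that the recursion closes in $2^{w(A)}$ steps while the resulting $G$ retains weight $w(A)$.

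For part~(b), if $A$ is precompact then $C$ and hence $L$ are compact, so the $G_\delta$-dense subgroup $G$ is pseudocompact by Definition~\ref{prel:def:pscp}(iii). For part~(c), if $A$ is connected then the embedded copy of $A$ is a connected subspace of $G$, so $A \subseteq G_0 \subseteq q(G) = A$, forcing $A = G_0 = q(G)$.
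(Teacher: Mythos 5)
Your direction (ii) $\Rightarrow$ (i) is correct and is essentially the paper's argument (the paper simply notes that $A$ is topologically isomorphic to a subgroup of the connected locally compact group $(\widetilde G)_0$ and invokes Theorem~\ref{LPAq:thm:pcg}, whose proof uses Morris's theorem exactly as you do). Your overall strategy for (i) $\Rightarrow$ (ii) --- embed $A$ in a connected locally compact abelian $C$, form $L=C\times F$ with $F$ compact zero-dimensional, and produce a $G_\delta$-dense subgroup $G$ of $L$ with $G\cap(C\times\{0\})=A\times\{0\}$ --- is also the paper's blueprint. But your concrete construction of $G$ has a genuine gap.

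The problem is the claim that choosing the $F$-coordinates of the adjoined points ``algebraically independent'' forces every element of $G$ with zero $F$-coordinate to lie in $A\times\{0\}$. A typical element of the subgroup generated by $A\times\{0\}$ and the adjoined points $(c_i,f_i)$ is $\bigl(a+\sum n_ic_i,\ \sum n_if_i\bigr)$ with $n_i\in\mathbb{Z}$. With your choice $F=(\mathbb{Z}/2\mathbb{Z})^{w(A)}$, the relation $\sum n_if_i=0$ only constrains the $n_i$ modulo $2$; independence gives $n_i\in 2\mathbb{Z}$, not $n_i=0$. Concretely, for any adjoined $(c,f)$ with $f\neq 0$ one has $2(c,f)=(2c,0)\in G$, and since the $C$-coordinates of the adjoined points must be $G_\delta$-dense in $C$, they cannot all satisfy $2c\in A$ (take $A=\mathbb{Z}\subseteq C=\mathbb{R}$: the set $\{c: 2c\in\mathbb{Z}\}$ is nowhere dense). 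So $G\cap(C\times\{0\})\supsetneq A\times\{0\}$ and the identification $q(G)\cong A$ fails. This is precisely why the paper uses $\mathbb{Z}_p^{\omega_1\times\lambda}$ (torsion-free, of free rank $2^\lambda$) rather than a bounded-torsion group, and why it sidesteps the bookkeeping altogether by taking $G$ to be the pullback $\mathbb{Z}_p^{\omega_1\times\lambda}\times_E C$ along a homomorphism $\varphi\colon\mathbb{Z}_p^{\omega_1\times\lambda}\to E=C/A$ whose fibers are $G_\delta$-dense (Lemma~\ref{LPAq:lemma:pb}); the pullback makes $G\cap(\{0\}\times C)=\{0\}\times\ker\psi=\{0\}\times A$ automatic. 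Your recursion could be repaired by taking $F=\mathbb{Z}_p^{w(A)}$ and requiring the $f_i$ to be independent modulo the pure subgroup generated by the earlier ones, but as written, with $F=\{0,1\}^{w(A)}$, the argument does not go through.
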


Theorem~\ref{LPAq:thm:main} follows the pattern of a number of known 
``embedding" results, which state that certain (locally) precompact 
groups embed into (locally) pseudocompact groups as a particular (e.g., 
functorial) closed subgroup  (cf.~\cite[2.1]{ComfvMill1}, \cite{Ursul1}, 
\cite[7.6]{ComfvMill2}, \cite{Ursul2}, and \cite[3.6]{Dikconpsc}). 
We are grateful to Dikran Dikranjan 
for suggesting that Theorem~\ref{LPAq:thm:main} might also hold for 
locally  precompact abelian  groups 
(rather than simply for precompact abelian groups, as it appeared in an 
early version of this manuscript), and for drawing 
our attention to the possibility of choosing $G$ so 
that \mbox{$w(G)\hspace{-2pt}=\hspace{-1.5pt}w(A)$}.
In the proof of Theorem~\ref{LPAq:thm:main}, we rely on a well-known 
theorem and a~technical lemma that are presented below.

\begin{ftheorem}[\pcite{9.14 and 24.25}{HewRos}, \pcite{23.27}{StroppelLCG}]
\label{LPAq:thm:condiv}
Every connected locally compact abelian group is divisible.
\end{ftheorem}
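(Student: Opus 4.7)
The plan is to apply the Pontryagin--van Kampen structure theorem for connected locally compact abelian groups to reduce to the compact case, and then use Pontryagin duality together with a simple connectedness argument to handle the compact case.

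First, by the structure theorem for connected locally compact abelian groups, any such $G$ is topologically isomorphic to $\mathbb{R}^a \times K$ for some non-negative integer $a$ and some compact connected abelian group $K$. Since a direct product of divisible groups is divisible, and $\mathbb{R}^a$ is manifestly divisible, it suffices to prove that every compact connected abelian group $K$ is divisible, i.e., that for every integer $n \geq 1$ the continuous homomorphism $\mu_n \colon K \to K$ defined by $\mu_n(x) = nx$ is surjective.

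Second, fix $n \geq 1$. The image $nK = \mu_n(K)$ is compact (as the continuous image of a compact space), hence closed in $K$. So it suffices to show $nK$ is dense in $K$. By Pontryagin duality applied to the short exact structure of $K / \operatorname{cl}_K (nK)$, density of $nK$ is equivalent to the triviality of every character $\chi \in \hat K$ vanishing on $nK$, which in turn is equivalent to $\chi \in \hat K$ with $n\chi = 0$ forcing $\chi = 0$; put succinctly, to $\hat K$ being torsion-free.

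Third, to verify that $\hat K$ is torsion-free, suppose $\chi \in \hat K$ satisfies $n\chi = 0$. Then $\chi(K)$ is contained in the $n$-torsion subgroup $\{z \in \mathbb{T} : nz = 0\}$ of $\mathbb{T}$, which is finite and therefore discrete. Since $\chi$ is continuous and $K$ is connected, $\chi(K)$ is a connected subset of a discrete space containing $0$, so $\chi(K) = \{0\}$ and $\chi = 0$, as desired.

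The main obstacle is the invocation of the structure theorem in the first step, which is nontrivial but entirely classical; once that reduction is in hand, the compact case rests only on the elementary facts that continuous images of connected sets are connected and that compact subgroups of Hausdorff groups are closed, combined with the standard duality dictionary.
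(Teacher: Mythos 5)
Your proof is correct, and it is precisely the classical argument that the paper's citations (Hewitt--Ross 9.14 for the splitting $G\cong\mathbb{R}^a\times K$, and 24.25 for the duality between connectedness of $K$ and torsion-freeness of $\widehat{K}$) point to; the paper itself supplies no proof, stating the result only as a cited fact. All three steps --- the reduction via the structure theorem, the passage from closedness of $nK$ to density via characters, and the connectedness argument showing $\widehat{K}$ is torsion-free --- are sound.
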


In order to distinguish continuous homomorphisms from those that are not 
subject to topological assumptions, we refer to the latter as {\em group 
homomorphisms}.

\begin{lemma} \label{LPAq:lemma:pb}
Let $E$ be a divisible abelian group, and $\lambda$  an infinite cardinal 
such that \mbox{$|E| \hspace{-2pt}\leq\hspace{-2pt} 2^\lambda\hspace{-2pt}$}.
Then for every {\rm (}fixed{\rm )} prime $p$,
there is a group homomorphism
\mbox{$\varphi\colon  \mathbb{Z}_p^{\omega_1 \times \lambda} \rightarrow E$}
such that:

\begin{myalphlist}

\item
$\varphi^{-1}(x)$ is $G_\delta$-dense in  $\mathbb{Z}_p^{\omega_1 \times 
\lambda}$ for every $x \in E$;

\item
for every abelian topological group $C$ and 
group homomorphism
\mbox{$\psi\colon C\rightarrow E$}, the pullback
\begin{align}
\mathbb{Z}_p^{\omega_1 \times \lambda} \times_E C := 
\{(x,c) \in \mathbb{Z}_p^{\omega_1 \times \lambda} \times C \mid 
\varphi(x)=\psi(c)\}
\end{align}
is $G_\delta$-dense in
\mbox{$\mathbb{Z}_p^{\omega_1 \times\lambda} 
\hspace{-3pt}\times\hspace{-2pt} 
C\hspace{-0.5pt}$.}

\end{myalphlist}
\end{lemma}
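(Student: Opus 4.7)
The plan is to construct $\varphi$ by a row decomposition of the index set. Write $\omega_1\times\lambda=\bigsqcup_{\alpha<\omega_1}(\{\alpha\}\times\lambda)$, so that $\mathbb{Z}_p^{\omega_1\times\lambda}\cong\prod_{\alpha<\omega_1}\mathbb{Z}_p^{\{\alpha\}\times\lambda}$. The key combinatorial observation is that every countable subset $J\subseteq\omega_1\times\lambda$ meets only countably many rows $\{\alpha\}\times\lambda$, so uncountably many rows miss $J$ entirely. Recall that a basic $G_\delta$-subset of $\mathbb{Z}_p^{\omega_1\times\lambda}$ (a countable intersection of basic open rectangles) has the form $W=\prod_{(\alpha,\beta)}V_{\alpha,\beta}$ with $V_{\alpha,\beta}=\mathbb{Z}_p$ for all $(\alpha,\beta)$ outside some countable $J$, and these sets form a base for the $G_\delta$-topology. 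Thus, if I arrange for $\varphi$ to surject onto $E$ when restricted to each individual row, then an unconstrained ``free'' row will always be available to adjust the value of $\varphi$ to any target in $E$ without disturbing $W$.

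The construction of $\varphi$ would proceed in two stages. First, for each $\alpha<\omega_1$, I would build a surjective group homomorphism $\varphi_\alpha\colon\mathbb{Z}_p^{\{\alpha\}\times\lambda}\to E$. Since $\mathbb{Z}_p^\lambda$ is torsion-free of cardinality $2^\lambda$, its torsion-free rank equals $2^\lambda\geq|E|$, so it contains a free abelian subgroup $F$ of rank at least $|E|$; send a basis of $F$ onto a generating set of $E$ to obtain a surjection $F\to E$, and extend it to all of $\mathbb{Z}_p^\lambda$ via the injectivity of $E$ as a $\mathbb{Z}$-module (which is equivalent to divisibility). Second, define the partial homomorphism
\[
\varphi_{\mathrm{fin}}\colon\bigoplus_{\alpha<\omega_1}\mathbb{Z}_p^{\{\alpha\}\times\lambda}\to E,\qquad \varphi_{\mathrm{fin}}\bigl((x_\alpha)_\alpha\bigr)=\sum_\alpha\varphi_\alpha(x_\alpha),
\]
which is well defined because an element of the direct sum has only finitely many nonzero components. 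A second application of injectivity of $E$ extends $\varphi_{\mathrm{fin}}$ to the desired group homomorphism $\varphi\colon\mathbb{Z}_p^{\omega_1\times\lambda}\to E$. By construction the restriction of $\varphi$ to the $\alpha$-th row $\mathbb{Z}_p^{\{\alpha\}\times\lambda}$ agrees with $\varphi_\alpha$ and is therefore surjective onto $E$.

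For (a), fix $e\in E$ and a nonempty basic $G_\delta$-subset $W$ with associated countable $J$. The set $A\subseteq\omega_1$ of first coordinates of elements of $J$ is countable, so pick any $\alpha^*\in\omega_1\setminus A$; the row $\{\alpha^*\}\times\lambda$ is disjoint from $J$ and hence completely unconstrained in $W$. Choose any $w\in W$, form $w'$ by zeroing out the $\alpha^*$-th row of $w$, and use surjectivity of $\varphi_{\alpha^*}$ to pick $z\in\mathbb{Z}_p^{\{\alpha^*\}\times\lambda}$ with $\varphi_{\alpha^*}(z)=e-\varphi(w')$. The point $w''$ obtained from $w'$ by overwriting the $\alpha^*$-th row with $z$ still lies in $W$ and satisfies $\varphi(w'')=\varphi(w')+\varphi_{\alpha^*}(z)=e$, so $\varphi^{-1}(e)\cap W\neq\emptyset$. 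For (b), a basic $G_\delta$-subset of $\mathbb{Z}_p^{\omega_1\times\lambda}\times C$ may be taken as a product $U\times V$ of basic $G_\delta$-subsets of the two factors, since countable intersections of open rectangles are again rectangles. Given such a nonempty $U\times V$, pick any $c\in V$, set $e=\psi(c)$, and apply (a) to obtain $x\in U\cap\varphi^{-1}(e)$; then $(x,c)\in(U\times V)\cap(\mathbb{Z}_p^{\omega_1\times\lambda}\times_E C)$.

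The main obstacle is the first step: producing a genuine group homomorphism on the full product that still surjects from each single row onto $E$. The hypothesis $|E|\leq 2^\lambda$ supplies a free abelian subgroup of $\mathbb{Z}_p^\lambda$ of rank at least $|E|$, and two uses of injectivity of divisible $E$ first lift the surjection to each row and then glue the row-wise maps from the direct sum to the full product. Once this row-wise surjectivity is in place, parts (a) and (b) reduce to the elementary combinatorial fact that every countable subset of $\omega_1\times\lambda$ avoids uncountably many rows.
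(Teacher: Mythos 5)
Your proposal is correct and follows essentially the same route as the paper: build a surjection $\mathbb{Z}_p^\lambda\to E$ from a free subgroup of rank $2^\lambda$ using divisibility, glue the row-wise maps over the direct sum $\bigoplus_{\omega_1}\mathbb{Z}_p^\lambda$, extend to the full product by divisibility again, and then use the fact that a countable support set avoids some entire row to hit any fiber inside any basic $G_\delta$-set. The only cosmetic differences are that you adjust to a general target $e\in E$ directly where the paper translates to reduce to the fiber over $0_E$, and part (b) is handled identically (fiberwise over $c\in C$).
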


\begin{proof}
Since the free rank of $\mathbb{Z}_p$ is $2^\omega\hspace{-2.5pt}$,
the free rank of $\mathbb{Z}_p^\lambda$ is $2^\lambda\hspace{-2.5pt}$. 
Thus,
$\mathbb{Z}_p^\lambda$ contains a free abelian subgroup $F$ of rank 
$2^\lambda\hspace{-2.5pt}$. By our assumption, 
\mbox{$|E| \hspace{-2pt}\leq\hspace{-2pt} 2^\lambda\hspace{-2pt}$}, and so 
there exists a surjective group homomorphism
\mbox{$\varphi_0\colon F \rightarrow E$}. One can extend
$\varphi_0$ to a surjective group homomorphism
\mbox{$\varphi_1\colon \mathbb{Z}_p^\lambda \rightarrow E$}, because 
$E$ is divisible. Let
\mbox{$\varphi_2\colon \bigoplus\limits_{\omega_1} \mathbb{Z}_p^\lambda 
\rightarrow E$} denote the group homomorphism
$\bigoplus\limits_{\omega_1} \varphi_1$. Since
$\bigoplus\limits_{\omega_1} \mathbb{Z}_p^\lambda$ is naturally isomorphic 
to a subgroup of
\mbox{$(\mathbb{Z}_p^\lambda)^{\omega_1} \hspace{-2pt} =  \hspace{-2pt}
\mathbb{Z}_p^{\omega_1 \times \lambda}$} and $E$ is divisible,
$\varphi_2$ extends to a group homomorphism
\mbox{$\varphi\colon  \mathbb{Z}_p^{\omega_1 \times \lambda} \rightarrow E$}.
We show that $\varphi$ satisfies the stated properties.

\bigskip

(a) Since translation in $\mathbb{Z}_p^{\omega_1\times\lambda}$ is a 
homeomorphism, 
it suffices to show  that each non-empty $G_\delta$-subset of 
$\mathbb{Z}_p^{\omega_1\times\lambda}$ meets $\varphi^{-1}(0_E)$.
Let $B$ be a non-empty $G_\delta$-subset, and let 
\mbox{$z\hspace{-2pt} \in \hspace{-2pt} B$}. 
There exists
\mbox{$K \hspace{-2pt}\subseteq \hspace{-2pt} \omega_1 \hspace{-2pt}\times
\hspace{-2pt} \lambda$} such that
\begin{align}
N(z,K) := \{y\in  \mathbb{Z}_p^{\omega_1 \times \lambda}
 \mid \forall (\gamma,\delta)\in K,
y_{\gamma,\delta} = z_{\gamma,\delta} \} \subseteq B,
\end{align}
and \mbox{$|K| \hspace{-2pt}\leq \hspace{-2pt}\omega$}.
If \mbox{$K \hspace{-2pt}\cap \hspace{-2pt}
(\{\alpha\} \hspace{-2.25pt}\times \hspace{-1.75pt} \lambda) \hspace{-2pt}
\neq \hspace{-1pt}\emptyset$} for every
\mbox{$\alpha \hspace{-2pt} \in \hspace{-2pt} \omega_1\hspace{-1pt}$},
then \mbox{$|K|\hspace{-2pt} \geq \hspace{-2pt} \omega_1
\hspace{-2pt} >\hspace{-2pt} \omega$}, contrary to the assumption that $K$
is countable. Thus, there is
\mbox{$\alpha_0 \hspace{-2pt} \in \hspace{-2pt} \omega_1\hspace{-1pt}$}
such that \mbox{$K \hspace{-2pt}\cap \hspace{-2pt}
(\{\alpha_0\} \hspace{-2.25pt}\times \hspace{-1.75pt} \lambda)
\hspace{-2pt} =\hspace{-1pt}\emptyset$}.
Since $\varphi_1$ is surjective, there is
\mbox{$w \hspace{-2pt}=\hspace{-2pt} (w_\beta)_{\beta \in \lambda}
 \hspace{-2pt}\in\hspace{-2pt} \mathbb{Z}_p^\lambda$} such
that \mbox{$\varphi_1(w) \hspace{-2.25pt}=\hspace{-1.5pt}\varphi(z)$}.
Let \mbox{$r \hspace{-2pt} \in\hspace{-2pt}
\mathbb{Z}_p^{\omega_1 \times \lambda}$}
denote the element defined
by\begin{align}
r_{\alpha,\beta} =
\begin{cases}
w_\beta & \text{if } \alpha=\alpha_0  \\
0 & \text{otherwise}.
\end{cases}
\end{align}
Since \mbox{$r \hspace{-2pt}\in\hspace{-2pt}  \bigoplus\limits_{\omega_1}
\mathbb{Z}_p^\lambda$}, one has
\mbox{$\varphi(r) \hspace{-2pt}= \hspace{-2pt} \varphi_2(r) 
\hspace{-2pt}= \hspace{-2pt} \varphi_1(w) 
\hspace{-2pt} = \hspace{-2pt} \varphi(z)$},
and therefore
\mbox{$\varphi(z-r) 
\hspace{-2pt}=\hspace{-1.5pt} 0$}.
To conclude, observe that 
\mbox{$z\hspace{-1pt} - \hspace{-1pt}r \hspace{-2pt}\in \hspace{-2pt} N(z,K)$,}
because $z$ and \mbox{$z \hspace{-1pt} - \hspace{-1pt} r$} differ only at
coordinates of the form $(\alpha_0,\beta)$,~and $\alpha_0$ was chosen such 
that \mbox{$K \hspace{-2pt}\cap \hspace{-2pt}
(\{\alpha_0\} \hspace{-2.25pt}\times \hspace{-1.75pt} \lambda)
\hspace{-2pt} =\hspace{-1pt}\emptyset$}. Hence, 
\mbox{$z-r \hspace{-2pt} \in \hspace{-2pt}
N(z,K)\hspace{-2pt}\cap \hspace{-2pt} \varphi^{-1}(0_E) 
\hspace{-2pt} \neq \hspace{-1pt} \emptyset$}, as desired.

(b)  By (a), the set
\mbox{$\varphi^{-1}(\psi(c))\hspace{-2pt} \times \hspace{-2pt} \{c\}$} 
is $G_\delta$-dense in 
\mbox{$\mathbb{Z}_p^{\omega_1 \times \lambda} 
\hspace{-2pt}\times\hspace{-2pt} \{c\}$}
for every \mbox{$c\hspace{-2pt} \in \hspace{-2pt} C\hspace{-1pt}$}. 
Consequently, 
\begin{align}
\mathbb{Z}_p^{\omega_1 \times \lambda} \times_E C = \bigcup\limits_{c\in 
C} 
(\varphi^{-1}(\psi(c)) \times  \{c\})
\end{align}
is $G_\delta$-dense in \mbox{$\mathbb{Z}_p^{\omega_1 \times \lambda} 
\hspace{-2pt}\times\hspace{-2pt} C\hspace{-1pt}$}.
\end{proof}

\begin{proof}[Proof of Theorem~\ref{LPAq:thm:main}.]
(i) $\Rightarrow$ (ii): By Theorem~\ref{LPAq:thm:pcg}, there is a 
connected locally compact abelian group $C$ such that $A$ is topologically 
isomorphic to a subgroup of $C$. Without loss of generality, we may assume 
that $A$ is actually a subgroup of $C$. Put 
\mbox{$E \hspace{-2pt} :=  \hspace{-2pt} C/A$} and 
\mbox{$\lambda \hspace{-2pt} =  \hspace{-2pt} w(C)$}, and let 
\mbox{$\psi\colon C \rightarrow E$} denote the canonical projection. 
\pagebreak[3]
(Unless $A$ is locally 
compact, this quotient is not Hausdorff, but we are interested in $E$ only 
as an abstract group, and ignore its topological properties.)
By Theorem~\ref{LPAq:thm:condiv},  $C$ is divisible, and thus $E$ 
is divisible as well. Since
\mbox{$|E|\hspace{-2pt}\leq \hspace{-2pt} |C| \hspace{-2pt} \leq 
\hspace{-2pt}2^{w(C)}\hspace{-2pt}=\hspace{-2pt} 
2^\lambda\hspace{-2.5pt}$},~the~\mbox{subgroup}
\mbox{$G\hspace{-2pt}:=\hspace{-1pt}
\mathbb{Z}_p^{\omega_1 \times\lambda} \hspace{-3pt}\times_E\hspace{-0pt} C$}
of the group
\mbox{$L\hspace{-2pt} := \hspace{-1pt} \mathbb{Z}_p^{\omega_1 \times\lambda}
\hspace{-3pt}\times\hspace{-2pt}C$} provided by 
Lemma~\ref{LPAq:lemma:pb}(b) is $G_\delta$-dense in $L$. Being a~product 
of a compact and a locally compact group, $L$ is locally compact, and 
consequently 
\mbox{$L\hspace{-2pt} = \hspace{-2pt} \widetilde{G}\hspace{-1pt}$}. 
Therefore, $G$ is locally precompact, and  by 
(the implication (vii) $\Rightarrow$ (i) of)
Theorem~\ref{prel:thm:lcps}, 
$G$ is locally pseudocompact. One has 
\mbox{$L_0 \hspace{-2pt} = \hspace{-2pt}
\{0\} \hspace{-2pt}\times \hspace{-2pt} C$}, because $\mathbb{Z}_p$ is 
zero-dimensional. Hence,
\begin{align}
L_0 \cap G & = \{(x,c)\in \mathbb{Z}_p^{\omega_1 \times \lambda} \times C
\mid\varphi(x)=\psi(c),x=0\} \\
& = \{(0,c)\in\mathbb{Z}_p^{\omega_1\times\lambda}\times C\mid\psi(c) = 0\} = 
\{0\} \times \ker \psi = \{0\} \times A,
\end{align}
where $\varphi$ is the homomorphism constructed in 
Lemma~\ref{LPAq:lemma:pb}.

(a) If \mbox{$w(A)\hspace{-2pt} \geq\hspace{-2pt} \omega_1$}, then 
$A$ is infinite, and therefore by Lemma~\ref{LPAq:thm:pcg}(a), $C$ 
may be chosen such that 
\mbox{$w(C)\hspace{-2pt}=\hspace{-2pt} w(A)$}. Hence,
\begin{align}
w(G) =w(L) = \omega_1 \cdot \lambda \cdot w(C) = w(C) = w(A),
\end{align}
as required.

(b) If $A$ is precompact, then by Lemma~\ref{LPAq:thm:pcg}(b), $C$ may be 
chosen to be compact. Thus, the group $L$ is compact, being a product of 
two compact groups. Therefore, the $G_\delta$-dense subgroup $G$ of $L$ is 
pseudocompact (cf.~\cite{ComfRoss2}). 

(c) If $A$ is connected and $G$ is the group provided by (ii), then $q(G)$ 
is connected, and therefore \mbox{$G_0\hspace{-1.5pt} = \hspace{-2pt} q(G)$}.
Hence, the statement follows by (ii).

(ii) $\Rightarrow$ (i): Since $G$ is locally pseudocompact, 
its completion $\widetilde G$ is locally compact,~and~thus 
$(\widetilde G)_0$ is a~connected locally compact abelian group. 
By our assumption,
$A$ is topologically isomorphic to a subgroup of $(\widetilde G)_0$, 
specifically, to 
\mbox{$(\widetilde G)_0 \hspace{-2pt} \cap \hspace{-1.5pt} G$}. Hence,
by Theorem~\ref{LPAq:thm:pcg}, $A$ is precompactly generated.
\end{proof}

\begin{remark}
We note that when $A$ is metrizable, the indicated equivalence of 
Theorem~\ref{LPAq:thm:main} holds, but the choice of $G$ with
\mbox{$w(G)\hspace{-2pt}=\hspace{-1.5pt}w(A)$} may be impossible.
Indeed, if $A$ is metrizable and precompactly generated, then 
\mbox{$w(A) \hspace{-2pt}=\hspace{-1pt} \omega$}. Consequently,
if $G$ is a locally pseudocompact group such that
\mbox{$w(G)\hspace{-2pt}=\hspace{-1.5pt}w(A)$}, then $G$ is 
locally compact. Thus, $q(G)$ is locally compact (being a closed subgroup 
of $G$), and
by Theorem~\ref{conn:thm:LC}, 
$q(G)$ is connected. Hence, $A$ can be topologically isomorphic to $q(G)$ 
only if $A$ itself is connected and locally compact; in that case, one 
can take \mbox{$G\hspace{-2pt} =\hspace{-2pt}A$}. In particular, for
\mbox{$A\hspace{-2pt}: = \hspace{-1.5pt}\mathbb{Q}$}, no locally 
pseudocompact $G$ can satisfy both (ii) of Theorem~\ref{LPAq:thm:main} and
\mbox{$w(G)\hspace{-2pt}=\hspace{-1.5pt}w(A)$}.
\end{remark}

\bigskip

\section*{Acknowledgements}

The authors wish to thank Dikran Dikranjan, Kenneth A. Ross,
and Javier Trigos-Arrieta for 
valuable electronic correspondence, and James D. Reid and Grant Woods for 
valuable discussions. The authors are grateful to Karen Kipper for her 
kind help in proofreading this paper for grammar and punctuation.
The authors appreciate the wealth of constructive comments of the 
anonymous referees that led to an improved presentation of this paper.

{\footnotesize

\bibliography{notes,notes2,notes3}
}

\begin{samepage}

\bigskip
\noindent
\begin{tabular}{l @{\hspace{1.9cm}} l}
Department of Mathematics & Department of Mathematics\\
Wesleyan University & University of Manitoba\\
Middletown, CT 06459 & Winnipeg, Manitoba, R3T 2N2 \\
USA & Canada \\ & \\
\em e-mail: wcomfort@wesleyan.edu  &
\em e-mail: lukacs@cc.umanitoba.ca
\end{tabular}

\end{samepage}

\end{document}